\newtheorem{theorem}{Theorem}
\newtheorem{lemma}{Lemma}
\newtheorem{corollary}{Corollary}
\theoremstyle{definition}
\newtheorem{conjecture}{Conjecture}
\begin{document}
	
	\title[Restricted Signed sumset in  set of integers ]{Some direct and inverse problems for the Restricted Signed sumset in  set of integers}
	
	
	\author[Mohan]{Mohan}
	\address{Department of Mathematics, Indian Institute of Technology Roorkee, Uttarakhand, 247667, India}
	\email{mohan98math@gmail.com}

	\author[R K Mistri]{Raj Kumar Mistri}
	\address{Department of Mathematics, Indian Institute of Technology Bhilai, Durg, 491001, Chhattisgarh, India}
	\email{rkmistri@iitbhilai.ac.in}
	
	\author[R K Pandey]{Ram Krishna Pandey}
	\address{Department of Mathematics, Indian Institute of Technology Roorkee, Uttarakhand, 247667, India}
	\email{ram.pandey@ma.iitr.ac.in}
	
	\subjclass[2010]{11P70, 11B75, 11B13}
	
	
	
	\keywords{Sumset, restricted sumset, signed sumset.}

	\begin{abstract}
	Given a positive integer $h$ and a nonempty finite set of integers $A=\{a_{1},a_{2},\ldots,a_{k}\}$, the \textit{restricted $h$-fold signed sumset of $A$}, denoted by $h^{\wedge}_{\pm}A$, is defined as
	$$h^{\wedge}_{\pm}A=\left\lbrace \sum_{i=1}^{k} \lambda_{i} a_{i}: \lambda_{i} \in \left\lbrace -1, 0, 1\right\rbrace \ \text{for} \ i=  1, 2, \ldots, k \ \text{and} \  \sum_{i=1}^{k} \left| \lambda_{i} \right| =h\right\rbrace. $$ 
	The direct problem associated with this sumset is to find the optimal lower bound of $|h^{\wedge}_{\pm}A|$, and the inverse problem associated with this sumset is to determine the structure of the underlying set $A$, when $|h^{\wedge}_{\pm}A|$ attains the optimal lower bound. Bhanja, Komatsu and Pandey studied the direct and inverse  problem for the restricted $h$-fold signed sumset for $h=2, 3$, and $k$ and conjectured some direct and inverse results for $h \geq 4$. In this paper, we prove these conjectures for $h=4$. We also prove the direct and inverse theorems for arbitrary $h$ under certain restrictions on the set $A$ which are particular cases of the conjectures. Moreover, we prove these conjectures for arithmetic progressions.
	\end{abstract}
\maketitle
	\section{Introduction}	

The following notation will be use throughout the paper. Let $\Bbb Z$ denote the set of integers. For  integers $a$ and $b$ with $a \leq b$, we denote the set $\{n \in \Bbb Z: a \leq n \leq b\}$ by $[a, b]$. For a nonempty finite set $A$ of integers, let $\max(A)$, $\min(A)$, $\max_{-}(A)$, $\min_{+}(A)$ denote the largest, the smallest, the second largest and the second smallest elements of $A$, respectively. For an integer $c$, we denote the set $\{ca: a \in A\}$ by $c \ast A$, and  write $-A$ for $(-1) \ast A$. By a $k$-term arithmetic progression of integers with common difference $d$, we mean a set $A$ of the form $\{a + id: i = 0, 1, \ldots, k-1\}$. Let $a, b, u, v, u_1, u_2, \ldots, u_n$ be integers. Then we write $a < \{u_1, u_2, \ldots, u_n\} < b$ to mean  $a < u_i < b$ for all $i = 1, 2, \ldots, n$. We also write $a <\{u~\text{or}~v\} < b$ to  mean  either $a < u < b$ or $a < v < b$. A set $S$ is said to be symmetric if $x\in S$, then $-x\in S$.  Let $A = \{a_1, a_2, \ldots, a_k\}$ be a nonempty finite subset of an additive abelian group $G$. For a positive integer $h$, the classical $h$-fold sumset $hA$ and the restricted $h$-fold sumset $h^{\wedge}A$ are defined as follows:
\[hA = \left\{\sum_{i=1}^{k}\lambda_i a_i: \lambda_i \in [0, h] ~\text{for}~ i = 1, 2, \ldots, k ~\text{and}~ \sum_{i=1}^{k}\lambda_i = h\right\},\]
and
\[h^{\wedge}A =  \left\{\sum_{i=1}^{k}\lambda_i a_i: \lambda_i \in [0, 1] ~\text{for}~ i = 1, 2,\ldots, k ~\text{and}~ \sum_{i=1}^{k}\lambda_i = h\right\}.\]

Other related sumsets of two subsets $A$ and $B$ of $G$ are the Minkowski sumset $A + B := \{a + b: a \in A, b \in B\}$, and the restricted sumset $A \dotplus B := \{a + b: a \in A, b \in B, a \neq b\}$. The study of the sumsets dates back to Cauchy \cite{Cauchy} who obtained the lower bound for the cardinality of the sumset $A + B$, where $A$ and $B$ are nonempty subsets of the set of residue classes modulo a prime $p$. The result is known as {\em Cauchy-Davenport Theorem} after Davenport rediscovered this result \cite{dav1, dav2} in $1935$. These type of sumsets have been studied extensively in the literature. A  classical book by Nathanson \cite{Nathanson1996} on Additive number theory contains detailed study of these sumsets and other kind of sumsets, and has a comprehensive list of bibliography (see \cite{tao} and \cite{Bajnok2018} also). For some old and recent articles in the context of $h$-fold sumsets and restricted $h$-fold  sumsets and their generalizations, see \cite{Freiman1959,Freiman1973,Lev1996,MistriPandey2014,Monopoli2015,YangChen2015,MistryPandey2018,TangWing2019,TANG2021,MohanPandey2022}.

Two other variants of these sumsets have appeared recently in the literature \cite{bajnok-ruzsa2003,KlopschLev2003, klopsch-lev2009, Bajnok2018, BhanjaKomPandey2021}: The {\it $h$-fold signed sumset} $h_{\pm}A$ and the {\it restricted $h$-fold signed sumset} $h^{\wedge}_{\pm}A$ of the set $A$. These are defined as follows:
\begin{equation*}
	h_{\pm}A := \left\{\sum_{i=1}^{k} \lambda_i a_i : \lambda_i \in [-h, h] ~\text{for}~ i = 1, 2, \ldots ,k ~\text{and}~ \sum_{i=1}^{k} |\lambda_i| =h \right\},
\end{equation*}
and
\begin{equation*}
	h^{\wedge}_{\pm}A := \left\{\sum_{i=1}^{k} \lambda_i a_i : \lambda_i \in [-1, 1] ~\text{for}~ i = 1, 2, \ldots ,k ~\text{and}~ \sum_{i=1}^{k} |\lambda_i| =h \right\}.
\end{equation*}

The problems associated with these sumsets are \textit{direct problems} and \textit{inverse problems}. The study of optimal lower bound for the cardinality of a sumset of a given set $A$ is called a direct problem, and the study of the structure of the underlying set $A$, when the optimal lower bound for the cardinality of its sumset is known, is called an inverse problem.

The signed sumset first appeared in the work of Bajnok and Ruzsa \cite{bajnok-ruzsa2003} in the context of the independence number of a subset of an abelian group, and it also appeared in the work of Klopsch and Lev \cite{KlopschLev2003,klopsch-lev2009} in a different context. Not much is known for the signed sumset. For the  results in this contex, one may refer Bajnok and Matzke \cite{bajnok-matzke2015, bajnok-matzke2016}. In a recent paper \cite{BhanjaPandey2019}, Bhanja and Pandey have studied the direct and inverse problems in the additive group $\mathbb{Z}$. They obtained the optimal lower bound for the cardinality of the sumset $h_{\pm}A$. They also proved that if the optimal lower bound is achieved, then $A$ must be a certain arithmetic progression.

Much less is known for the restricted signed sumset $h^{\wedge}_{\pm}A$. Recently Bhanja et al. \cite{BhanjaKomPandey2021} solved some cases of both the direct and inverse problems for  $h^{\wedge}_{\pm}A$ in $\Bbb Z$ and  conjectured  rest of the cases. More precisely, they proved the following result.

\begin{theorem}[{\cite[Theorem 2.1, Theorem 3.1]{BhanjaKomPandey2021}}]
	Let $h$ and $k$ be positive integers with $h \leq k$. Let $A$ be a set of $k$ nonnegative integers. If $0 \notin A$, then
	\begin{equation}\label{Bound A}
		\left|h^{\wedge}_{\pm}A\right| \geq   2(hk-h^2)+ \frac{h(h+1)}{2} + 1.
	\end{equation}
	If $0 \in A $, then
	\begin{equation}\label{Bound B}
		\left|h^{\wedge}_{\pm}A\right| \geq   2(hk-h^2)+ \dfrac{h(h-1)}{2} + 1.
	\end{equation}
	These lower bounds are best possible for $h=1,2$, and k.
\end{theorem}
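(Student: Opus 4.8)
The plan is to establish both inequalities \eqref{Bound A} and \eqref{Bound B} simultaneously by induction on $k$ for a fixed $h$, after the usual normalisation: order $A$ as $a_1 < a_2 < \cdots < a_k$ and record that $h^{\wedge}_{\pm}A$ is symmetric about the origin, i.e. $x \in h^{\wedge}_{\pm}A$ if and only if $-x \in h^{\wedge}_{\pm}A$. The base case $k=h$ and the single inductive step $k-1 \to k$ each carry a self-contained combinatorial core, and the two hypotheses ($0 \notin A$ versus $0 \in A$) will differ only through the base case; the step adds the same amount in both.

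\emph{Base case} $k=h$. When $|A|=h$ the condition $\sum|\lambda_i|=h$ forces every $\lambda_i \in \{-1,+1\}$, so with $\Sigma = a_1+\cdots+a_k$ one has $\sum \lambda_i a_i = \Sigma - 2\sum_{i\in S}a_i$ where $S=\{i:\lambda_i=-1\}$. Thus $|h^{\wedge}_{\pm}A|$ equals the number of \emph{distinct subset sums} of $A$. I would then use the classical distinct-subset-sum estimate: for $m$ positive integers $b_1<\cdots<b_m$ the strictly increasing chain $0 < b_1 < \cdots < b_m < b_m+b_1 < \cdots < b_m+b_{m-1} < b_m+b_{m-1}+b_1 < \cdots$ displays $1+\tfrac{m(m+1)}{2}$ distinct subset sums. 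Taking $m=h$ when $0\notin A$ yields $|h^{\wedge}_{\pm}A| \geq \tfrac{h(h+1)}{2}+1$; when $0\in A$ the element $0$ contributes nothing, so the count is that of the $h-1$ positive elements, giving $\tfrac{(h-1)h}{2}+1 = \tfrac{h(h-1)}{2}+1$. These match the constants in \eqref{Bound A} and \eqref{Bound B} at $k=h$.

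\emph{Inductive step.} Set $A' = A\setminus\{a_k\}$, so $|A'|=k-1\geq h$. I claim $|h^{\wedge}_{\pm}A| \geq |h^{\wedge}_{\pm}A'| + 2h$; since there are $k-h$ such steps from the base, a total of $2h(k-h)$ is added and the bounds follow. Writing $M := \max\bigl(h^{\wedge}_{\pm}A'\bigr) = a_{k-1}+\cdots+a_{k-h}$ and $P = \{a_{k-1},\ldots,a_{k-h+1}\}$, I exhibit $h$ elements of $h^{\wedge}_{\pm}A$ exceeding $M$, namely $S_{\max} = a_k + \sum P$ and, for $i=1,\ldots,h-1$, the all-positive configurations $E_i = a_k + \sum P - a_{k-i} + a_{k-h}$ (which use $a_k$, the $h-2$ elements of $P\setminus\{a_{k-i}\}$, and $a_{k-h}$, hence exactly $h$ distinct indices). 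A one-line computation gives $S_{\max}-M = a_k-a_{k-h}>0$ and $E_i - M = a_k - a_{k-i}>0$, while $E_i = S_{\max}-a_{k-i}+a_{k-h}$ shows the $E_i$ are pairwise distinct and distinct from $S_{\max}$. By symmetry their negatives give $h$ elements below $-M = \min\bigl(h^{\wedge}_{\pm}A'\bigr)$; since $M>0$, the $h$ large and $h$ small elements are disjoint and all lie outside the interval $[-M,M]$ containing $h^{\wedge}_{\pm}A'$, so they furnish $2h$ genuinely new elements.

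The main obstacle will be the inductive step, specifically verifying that the explicit elements are legitimate and that they remain outside $h^{\wedge}_{\pm}A'$ in all boundary regimes; the most delicate case is $0\in A$ with $k=h+1$, where $a_{k-h}=a_1=0$, and one must check that $E_i$ is still a valid $h$-element configuration (the zero summand occupying one of the $h$ slots) and that $E_i>M$ persists. Finally, for the \emph{best possible} assertions I would supply arithmetic progressions meeting the bounds: $\{1,2,\ldots,k\}$ (resp.\ $\{0,1,\ldots,k-1\}$) for $h=k$, whose subset sums fill an interval and realise $\tfrac{k(k+1)}{2}+1$ (resp.\ $\tfrac{k(k-1)}{2}+1$); $\{1,3,5,\ldots,2k-1\}$ (resp.\ $\{0,2,\ldots,2k-2\}$) for $h=2$; and any admissible set for $h=1$. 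For each of these the base value is exact and every inductive increment equals precisely $2h$, so the chain of inequalities is tight throughout.
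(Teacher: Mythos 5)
This theorem is imported verbatim from the cited reference \cite{BhanjaKomPandey2021}; the paper under review states it without proof, so there is no internal argument to compare against. Judged on its own, your proof is correct and self-contained. The base case $k=h$ is right: with only $h$ coordinates available, $\sum_i|\lambda_i|=h$ forces $\lambda_i\in\{-1,1\}$ for every $i$, so the sumset is an affine image of the set of subset sums of $A$, and the standard increasing chain $0<b_1<\cdots<b_m<b_m+b_1<\cdots$ gives $1+\tfrac{m(m+1)}{2}$ distinct subset sums with $m=h$ (resp.\ $m=h-1$ when $0\in A$), matching the two constants. The inductive step also checks out: with $M=a_{k-1}+\cdots+a_{k-h}$ one computes $S_{\max}-M=a_k-a_{k-h}>0$ and $E_i=S_{\max}-a_{k-i}+a_{k-h}=M+(a_k-a_{k-i})>M$, the $h$ elements $S_{\max},E_1,\ldots,E_{h-1}$ are pairwise distinct because the $a_{k-i}$ are, each $E_i$ is a legitimate all-positive configuration on exactly $h$ indices (including the degenerate situation $a_{k-h}=0$, where the zero still occupies a slot with $|\lambda|=1$), and symmetry of $h^{\wedge}_{\pm}A'$ guarantees $M\geq 0$, so the $h$ new large elements, the $h$ new small ones, and $h^{\wedge}_{\pm}A'\subseteq[-M,M]$ are mutually disjoint. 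This yields exactly the increment $2h$ per step and hence $2h(k-h)$ over the base constant. Your sharpness examples ($[1,k]$ and $[0,k-1]$ for $h=k$; $\{1,3,\ldots,2k-1\}$ and $d\ast[0,k-1]$ for $h=2$; anything for $h=1$) all attain the bounds. The only presentational caveat is that the middle of the argument is phrased as a plan ("I would then use\ldots"); written out, it is a complete proof, and it is arguably cleaner than a case analysis because the $0\in A$ and $0\notin A$ cases genuinely differ only in the base case.
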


In the same paper, they also proved the inverse theorems for $h=2$ and $h=k$ (see \cite[Theorem 2.2, Theorem 2.3, Theorem 3.2, Theorem 3.3]{BhanjaKomPandey2021}). It can be verified that the lower bounds in (\ref{Bound A}) and (\ref{Bound B}) are not optimal for $3 \leq h \leq k-1$. For these cases, they conjectured the lower bounds and the inverse results, and proved their conjectures for the case $h = 3$ (see \cite[Theorem 2.5]{BhanjaKomPandey2021} and \cite[Theorem 3.5]{BhanjaKomPandey2021})). The precise statements of the conjectures are the following.

\begin{conjecture}[{\cite[Conjecture 2.4, Conjecture 2.6]{BhanjaKomPandey2021}}]\label{Conjecture 1}			
	Let $A$ be a set of $k \geq 4$ positive integers, and let $h$ be an integer with $3 \leq h \leq k-1$. Then
	\begin{equation}\label{Lower bound Conjecture 1}
		\left|h^{\wedge}_{\pm}A\right| \geq 2hk-h^2 + 1.
	\end{equation}
	This lower bound is best possible. Moreover,  if $|h^{\wedge}_{\pm}A| = 2hk-h^2 + 1,$ then $A = d \ast \{1,3,\ldots, 2k-1\}$ for some positive integer $d$.
\end{conjecture}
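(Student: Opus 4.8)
The plan is to prove Conjecture~\ref{Conjecture 1} by treating the direct (lower bound) and inverse (structure) parts separately, and to reduce the general case $3 \le h \le k-1$ to the boundary cases already handled in \cite{BhanjaKomPandey2021}, namely $h=k$ (where the bound and structure are known) together with the induction hypothesis on smaller values of $h$ and $k$. For the direct problem, I would first normalize the set by writing $A = \{a_1 < a_2 < \cdots < a_k\}$ with all $a_i$ positive, and then exhibit an explicit chain of distinct elements of $h^{\wedge}_{\pm}A$ whose length is exactly $2hk - h^2 + 1$. The natural candidates are the largest achievable sum and a sequence of one-step modifications of it: starting from $S_{\max} = a_{k} + a_{k-1} + \cdots + a_{k-h+1}$ (the sum of the $h$ largest elements, all with coefficient $+1$), one produces smaller elements either by swapping an index $a_j$ for a smaller available index (shifting the window of active $+1$ coefficients leftward), or by flipping a coefficient from $+1$ to $-1$, each such operation producing a strictly smaller value because the $a_i$ are strictly increasing. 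Carefully counting how many strictly decreasing values these operations yield, and checking they are all distinct and all lie in $h^{\wedge}_{\pm}A$, should recover the count $2hk - h^2 + 1$; the arithmetic progression $A = d \ast \{1, 3, \ldots, 2k-1\}$ is the extremal example showing the bound is tight.

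For the inverse problem, the goal is to show that equality in \eqref{Lower bound Conjecture 1} forces $A = d \ast \{1, 3, \ldots, 2k-1\}$. My strategy is to argue by contradiction: assuming equality holds but $A$ is \emph{not} of the prescribed form, I would locate a ``gap violation'' in $A$ — some place where consecutive elements fail to be in the correct arithmetic relationship — and use it to manufacture an element of $h^{\wedge}_{\pm}A$ that is not accounted for in the minimal chain constructed above, thereby pushing $\left|h^{\wedge}_{\pm}A\right|$ strictly above $2hk - h^2 + 1$. Concretely, the extremal set is characterized by the property that at each stage the decreasing operations (index-shift and sign-flip) produce consecutive integers with no overlaps and no extra values; any deviation from the odd arithmetic progression should break this tight packing and create at least one additional distinct sum. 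A cleaner route, if available, is induction on $h$: one projects onto a sub-sumset by fixing the coefficient of $a_k$ (either $a_k$ is used, reducing to an $(h-1)$-fold signed sumset of $A \setminus \{a_k\}$, or it is not, reducing to an $h$-fold signed sumset of a $(k-1)$-element set), and then combines the two structural conclusions from the induction hypothesis, using the already-established cases $h=2,3$ and $h=k$ as the base.

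The main obstacle, I expect, is the inverse direction, and specifically controlling the \emph{overlaps} between the sums that use $a_k$ and those that do not when running the induction on $k$. In the signed setting the two pieces $h^{\wedge}_{\pm}(A \setminus \{a_k\})$ and $a_k + (h-1)^{\wedge}_{\pm}(A \setminus \{a_k\}) \cup \bigl(-a_k + (h-1)^{\wedge}_{\pm}(A \setminus \{a_k\})\bigr)$ can interleave, and a naive addition of cardinalities will overcount unless one pins down exactly how large the intersection can be; equality in the direct bound should correspond to this intersection being as large as possible, which is precisely what the odd-progression structure guarantees. Establishing that any \emph{other} configuration forces the intersection to shrink (hence the total to grow) is the delicate combinatorial heart of the argument. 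A secondary difficulty is the bookkeeping in the direct bound itself: one must verify that the index-shift and sign-flip operations do not collide with each other across different ``levels,'' which requires a careful ordering of all the produced values and is where hidden coincidences (forcing a larger-than-expected count in the non-extremal case, but exact equality in the extremal one) must be ruled out by using the strict monotonicity and the specific gap pattern of $A$.
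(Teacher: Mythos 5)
Your proposal is a plan rather than a proof, and the plan misses the actual point of difficulty. Note first that the statement you are addressing is stated in the paper as a \emph{conjecture}: the paper proves it only for $h=4$ (Theorem \ref{main theorem I}, via Theorem \ref{main theorem}) and for special classes of sets (arithmetic progressions, super-increasing-type sets). A proof for all $3\le h\le k-1$ would go strictly beyond the paper. The correct reduction, which the paper carries out in Lemma \ref{Lemma-1} and Theorem \ref{Partial inverse thm}, is to split $h^{\wedge}_{\pm}A$ as $(-h^{\wedge}A')\cup h^{\wedge}_{\pm}A_{h+1}\cup h^{\wedge}A'$ with $A'=A\setminus\{a_1\}$ and $A_{h+1}$ the $h+1$ smallest elements; the two outer pieces are handled by Nathanson's restricted-sumset bound, and everything reduces to showing $\left|h^{\wedge}_{\pm}A_{h+1}\right|\ge (h+1)^2$ for an $(h+1)$-element set of positive integers, with equality only for $d\ast\{1,3,\dots,2h+1\}$. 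Your sketch never isolates this core case, and it is exactly the part that is hard: it is open for $h\ge 5$, and for $h=4$ the paper needs Lemmas \ref{main lemma 1}--\ref{main lemma 4}, a long case analysis on which gaps $a_i-a_{i-1}$ equal $2a_1$, with dozens of sub-cases where specific coincidences force $A$ to be an explicit set that is then checked directly.

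Concretely, your ``index-shift plus sign-flip'' chain starting from $S_{\max}$ is essentially the construction behind the already-known bound $2(hk-h^2)+\tfrac{h(h+1)}{2}+1$ of Bhanja--Komatsu--Pandey (Theorem 1 of the paper), which is strictly \emph{weaker} than the conjectured $2hk-h^2+1$ for $3\le h\le k-1$. The extra elements needed live in the mixed-sign middle range, where collisions such as $-a_3-a_2-a_1=-a_4-a_2+a_1$ (which occurs precisely when $d=2a_1$) are unavoidable and must be traded against other elements case by case. You acknowledge that ``hidden coincidences must be ruled out'' but give no mechanism for doing so; that mechanism is the entire content of the paper's Section 3 and of the still-open general conjecture. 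The inverse part of your sketch has the same problem: saying that any deviation from the odd progression ``should break the tight packing'' is a restatement of the conjecture, not an argument. As written, the proposal does not establish the lower bound, let alone the equality characterization.
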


\begin{conjecture}[{\cite[Conjecture 3.4, Conjecture 3.7]{BhanjaKomPandey2021}}]\label{Conjecture 2}
	Let $A$ be a set of $k \geq 5$ nonnegative integers with $0 \in A$, and let $h$ be an integer with $3 \leq h \leq k-1$. Then
	\begin{equation*}
		\left|h^{\wedge}_{\pm}A\right| \geq 2hk - h(h+1) + 1.
	\end{equation*}
	This lower bound is best possible. Moreover, if $\left|h^{\wedge}_{\pm}A\right| = 2hk-h(h+1) + 1,$ then $A = d \ast [0,k-1]$ for some positive integer $d$.
\end{conjecture}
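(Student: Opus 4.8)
The plan is to separate the conjecture into its three assertions --- the value attained by the arithmetic progression (which yields \emph{best possible}), the universal lower bound, and the inverse rigidity --- and to attack them with two common reductions. First, scaling: since $x \mapsto dx$ is a bijection, $h^{\wedge}_{\pm}(d \ast A) = d \ast h^{\wedge}_{\pm}A$, so $|h^{\wedge}_{\pm}(d\ast[0,k-1])| = |h^{\wedge}_{\pm}[0,k-1]|$ and \emph{best possible} reduces to a single computation with $d=1$. Second, since $0 \in A$, writing $A' = A \setminus \{0\}$ (a set of $k-1$ positive integers) and splitting according to whether the coefficient on $0$ is $0$ or $\pm1$ (the element $0$ contributes nothing to the value but consumes one unit of $\sum|\lambda_i|$),
\[
h^{\wedge}_{\pm}A = h^{\wedge}_{\pm}A' \cup (h-1)^{\wedge}_{\pm}A'.
\]
Ordering $0 = a_1 < a_2 < \cdots < a_k$, every signed $h$-sum lies in $[-S,S]$ with $S = a_k + a_{k-1} + \cdots + a_{k-h+1}$; since $a_i \geq i-1$ we get $S \geq hk - \frac{h(h+1)}{2} =: M$, with equality if and only if $a_i = i-1$ for all $i$. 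This seeds the extremal analysis, though one must note that for $d>1$ the progression has $S = dM$ while its sumset still has only $2M+1$ points, so the crude bound $|h^{\wedge}_{\pm}A| \leq 2S+1$ is \emph{not} what gives best possible; the exact count comes from the interval computation below.

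Next I would prove the interval computation: for $3 \leq h \leq k-1$,
\[
h^{\wedge}_{\pm}[0,k-1] = [-M,M],
\]
whence $|h^{\wedge}_{\pm}[0,k-1]| = 2M+1 = 2hk - h(h+1)+1$, settling best possible. For $3 \leq h \leq k-2$ I would induct by peeling off the largest element $k-1$:
\[
h^{\wedge}_{\pm}[0,k-1] = h^{\wedge}_{\pm}[0,k-2] \cup \big((k-1) + (h-1)^{\wedge}_{\pm}[0,k-2]\big) \cup \big(-(k-1) + (h-1)^{\wedge}_{\pm}[0,k-2]\big).
\]
By induction the central set is an interval $[-M'',M'']$ and the two shifted sets are translates of $[-M',M']$ with $M' = (h-1)(k-1)-\frac{h(h-1)}{2}$; a short computation shows the positive translate reaches exactly $(k-1)+M' = M$ and that the overlap condition $(k-1)-M' \leq M''+1$ reduces to $2(k-1)(1-h)+h^2-1 \leq 0$, which holds throughout the range, so the three intervals merge into $[-M,M]$. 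The delicate points are the boundary layer $h-1 = 2$, where $(h-1)^{\wedge}_{\pm}$ is an interval with a single hole (at $0$), this hole being absorbed by the central layer; and the diagonal $h = k-1$, which falls outside the inductive range and must be handled directly, exploiting that there the $h$-layer has fixed parity while the $(h-1)$-layer supplies the opposite parity. The hypothesis $h \geq 3$ is exactly what forces a genuine full interval (for $h=2$ the value $0$ is unattainable), and I would check the small base cases by hand.

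For the universal lower bound I would, for arbitrary admissible $A$, build a strictly increasing chain of signed $h$-sums by starting from the all-plus configuration on the top $h$ elements (value $S \geq M$) and applying value-decreasing elementary moves --- sliding a $+$ from $a_i$ to a smaller unused $a_j$, or flipping a top $+$ into a $-$ --- each move changing the value strictly; together with the symmetry $s \in h^{\wedge}_{\pm}A \Leftrightarrow -s \in h^{\wedge}_{\pm}A$ and the realizability of $0$ (available for $h \geq 3$) this would assemble $2M+1$ distinct elements. An alternative route estimates the two pieces of the decomposition above by the known bounds, but $|X \cup Y| \geq \max(|X|,|Y|)$ only delivers $2hk - h^2 - 2h + 1$, short of the target by $h$, so the intersection must be controlled. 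Either way the crux, and the step I expect to be the main obstacle for general $h$, is to guarantee \emph{uniformly in} $A$ that at least $M$ distinct nonnegative values survive: shrinking the gaps $a_{i+1}-a_i$ lowers $S$ but must not lower the attainable count below the progression's, and making this counting uniform is precisely the content the paper settles only for $h=4$ and under extra hypotheses.

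Finally, for the inverse statement I would assume $|h^{\wedge}_{\pm}A| = 2M+1$ and derive $A = d \ast [0,k-1]$. Here the extremal observation $S \geq M$ alone is insufficient, since it pins down only the normalized case $d=1$; the substance is an ``extra element'' lemma asserting that any deviation from the progression structure --- setting $d = a_2 = \min_{+}(A)$ and supposing some $a_i \neq (i-1)d$ --- produces either a signed $h$-sum strictly outside the expected span or an additional attained value inside it, forcing $|h^{\wedge}_{\pm}A| \geq 2M+2$. Proving this rigidity uniformly in $h$ is the principal difficulty of the whole problem. I would first establish it for $h=4$, where having only four nonzero coefficients makes the extremal configurations few enough to analyze exhaustively, and then attempt to propagate the conclusion to general $h$ through the peeling recursion, reducing the $(k,h)$ rigidity to the $(k-1,h)$ and $(k-1,h-1)$ cases.
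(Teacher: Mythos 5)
Your plan settles only the \emph{best possible} clause, and there it is sound and genuinely different from the paper's treatment: the dilation remark plus the induction on $k$ that peels off $k-1$ and merges $[-M'',M'']$ with $\pm(k-1)+[-M',M']$ works (your overlap inequality reduces to $-(h-1)^2\le 0$ after substituting $k-1\ge h$), whereas the paper instead exhibits an explicit increasing chain of signed sums bridging $\max(h^{\wedge}(-A))$ and $\min(h^{\wedge}A)$ for $A=[0,k-1]$. For the two substantive clauses --- the universal lower bound and the inverse rigidity --- your proposal is a program rather than a proof, and it lacks the one structural tool the paper relies on. Lemma \ref{Lemma-2} observes that, with $A'=A\setminus\{0\}$ and $A_h$ the smallest $h+1$ elements of $A$, the three sets $-h^{\wedge}A'$, $h^{\wedge}_{\pm}A_h$, $h^{\wedge}A'$ all lie in $h^{\wedge}_{\pm}A$ and pairwise share exactly one point, so Nathanson's bound $|h^{\wedge}A'|\ge h(k-1)-h^2+1$ on the two wings reduces the entire conjecture (direct and, via Theorem \ref{Partial inverse thm I}, inverse) to showing $|h^{\wedge}_{\pm}A_h|\ge h(h+1)+1$ for $(h+1)$-element sets containing $0$ --- a statement with no $k$ in it. Your substitute decomposition $h^{\wedge}_{\pm}A=h^{\wedge}_{\pm}A'\cup(h-1)^{\wedge}_{\pm}A'$ is a correct identity but, as you compute yourself, the naive union bound loses exactly $h$ elements; and your fallback of a ``strictly increasing chain of elementary moves'' is never constructed and is precisely the uniform counting you concede is the open difficulty.

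Without that reduction, your stated strategy for $h=4$ --- that ``the extremal configurations are few enough to analyze exhaustively'' --- does not get off the ground: the exhaustive analysis is feasible only because Lemma \ref{Lemma-2} confines it to $5$-element sets containing $0$, independent of $k$; over all $k$-element sets there is nothing finite to enumerate. You should also note that the paper itself does not prove Conjecture \ref{Conjecture 2} in full generality: it establishes it for $h=4$ (Theorem \ref{main theorem II}, whose case analysis is moreover only asserted by analogy with the positive-integer case), for arithmetic progressions, and under side hypotheses on $A_h$ or $A'$. So the difficulties you flag for general $h$ are genuinely open, but as written your plan recovers strictly less than the paper does, because the reduction to bounded cardinality is missing.
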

We prove these conjectures for the case $h =4$ in Section \ref{proof-conj1}. In  Section \ref{sec-aux-lem}, we prove some auxiliary lemmas which will be crucial in the proof of the conjectures for $h = 4$. Using these lemmas, we also prove the conjectures for certain special type of sets including arithmetic progression (see the results in Subsection \ref{results-special-sets}). We remark that Bhanja et al. \cite{BhanjaKomPandey2021}  proved the lower bound in (\ref{Lower bound Conjecture 1}) for super increasing sequences.

\section{Two auxiliary lemmas and some special cases} \label{sec-aux-lem}

\subsection{Auxiliary lemmas}

We need the following result for the proof of Lemma \ref{Lemma-1} and Lemma \ref{Lemma-2}.

\begin{theorem} [{\cite[Theorem 1.9, Theorem 1.10]{Nathanson1996}}]  \label{Nathanson Direct Inverse  Thm}
	Let A be a nonempty finite set of integers, and let $1 \leq h \leq \left| A\right| $. Then
	\[\left|h^{\wedge}A\right| \geq h|A| - h^{2} + 1.\]
	Furthermore, if $\left|h^{\wedge}A\right| = h|A| - h^{2} + 1$, then $A$ is an arithmetic progression provided that $\left|A\right| \geq 5$ and $2 \leq h \leq |A|-2$.
\end{theorem}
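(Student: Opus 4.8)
The plan is to treat the direct and inverse statements separately, proving the inverse part by peeling off the extreme elements of $A$ and inducting on $|A|$.

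For the direct bound I would order $A = \{a_1 < a_2 < \cdots < a_k\}$ and work in the lattice of $h$-element subsets of $[1,k]$, sending $\{i_1 < \cdots < i_h\}$ to the sum $\sum_j a_{i_j} \in h^{\wedge}A$. Since raising a single index strictly increases the associated sum, any saturated chain from the bottom set $\{1,2,\ldots,h\}$ to the top set $\{k-h+1,\ldots,k\}$ produces a strictly increasing sequence in $h^{\wedge}A$. Such a chain has length $\sum_{j=k-h+1}^{k} j - \sum_{j=1}^{h} j = hk - h^2$, hence yields $hk - h^2 + 1$ distinct sums, giving the bound.

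For the inverse part I would first normalise, using translation- and dilation-invariance of both hypothesis and conclusion, to assume $a_1 = 0$ and that the $\gcd$ of the consecutive differences is $1$; I would also record the duality $h^{\wedge}A = \left(\sum_{a\in A} a\right) - (k-h)^{\wedge}A$, which shows $|h^{\wedge}A| = |(k-h)^{\wedge}A|$ and that extremality and the AP-conclusion are invariant under $h \mapsto k-h$. The engine of the proof is the decomposition, with $A' = A \setminus \{a_k\}$, $h^{\wedge}A = h^{\wedge}A' \cup \left(a_k + (h-1)^{\wedge}A'\right)$. Setting $M = \max(h^{\wedge}A') = a_{k-h} + \cdots + a_{k-1}$, every element of $h^{\wedge}A$ exceeding $M$ must use $a_k$. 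The key local lemma is that there are at least $h$ such elements: the saturated chain that shifts the index block $\{k-h,\ldots,k-1\}$ up to $\{k-h+1,\ldots,k\}$ one index at a time passes through $h$ subsets of strictly increasing sum, each exceeding $M$ and each containing $a_k$. Combined with the direct bound $|h^{\wedge}A'| \geq h(k-1) - h^2 + 1$ applied to $A'$, this gives $|h^{\wedge}A| \geq |h^{\wedge}A'| + h \geq hk - h^2 + 1$, so at equality $A'$ is itself extremal. The mirror argument (removing $a_1$, or applying $A \mapsto -A$) shows $A'' = A \setminus \{a_1\}$ is extremal as well.

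With this descent I would induct on $k$. The case $h = 2$ (for every $k \geq 5$) is taken as the base, handled by the classical restricted-sumset argument in the differences; by the duality above this also settles $h = k-2$, and together these cover $k = 5$ completely. For $k \geq 6$ and $3 \leq h \leq k-3$, the descent produces extremal sets $A'$ and $A''$, each of size $k-1 \geq 5$ and with admissible range $2 \leq h \leq (k-1)-2$, so by the induction hypothesis both are arithmetic progressions; since they overlap in the $k-2 \geq 3$ terms $a_2, \ldots, a_{k-1}$, their common differences coincide and $A$ is an arithmetic progression. I expect the main obstacle to be precisely the descent step: proving that equality for $A$ forces equality for $A'$, which rests on the count of at least $h$ elements of $h^{\wedge}A$ above $M$ together with the verification that, at equality, no extra sums below $M$ are created. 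The surrounding bookkeeping---the normalisation, the boundary values $h \in \{2, k-2\}$ via duality, and the base case $k=5$---is routine once the descent is in place.
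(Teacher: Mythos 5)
This statement is not proved in the paper at all: it is quoted verbatim from Nathanson's book (Theorems 1.9 and 1.10) and used as an external input, so there is no in-paper argument to compare yours against. Judged on its own merits, your reconstruction is correct and is essentially the classical proof. The direct bound via a saturated chain of $h$-element index sets, each step raising one index by one, is exactly the standard argument and yields $hk-h^{2}+1$ strictly increasing sums. Your descent lemma is also sound: the $h$ index sets obtained by shifting the block $\{k-h,\ldots,k-1\}$ up to $\{k-h+1,\ldots,k\}$ one index at a time all contain $k$ and have sums strictly exceeding $M=\max(h^{\wedge}A')$, so $h^{\wedge}A$ contains $h^{\wedge}A'$ together with $h$ further elements and $\left|h^{\wedge}A\right|\geq\left|h^{\wedge}A'\right|+h$; the concern you raise about extra sums below $M$ is already absorbed by this counting, since any such sum would force $\left|h^{\wedge}A\right|>hk-h^{2}+1$ and contradict extremality. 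The duality $h^{\wedge}A=\left(\sum_{a\in A}a\right)-(k-h)^{\wedge}A$ does preserve extremality because $(k-h)k-(k-h)^{2}+1=hk-h^{2}+1$, and for $k\geq 6$ the two extremal $(k-1)$-term progressions $A'$ and $A''$ overlap in $k-2\geq 4$ consecutive terms, so their common differences agree and $A$ is an arithmetic progression. The one genuine soft spot is the base case $(h,k)=(2,5)$ (equivalently $h=k-2$ by duality), which you defer to ``the classical restricted-sumset argument'': that $\left|2^{\wedge}A\right|=2|A|-3$ forces an arithmetic progression when $|A|\geq 5$ is a real theorem requiring its own argument (it fails for $|A|=4$, e.g.\ $A=\{0,1,3,4\}$), so your proof is complete only modulo that classical input.
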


\begin{lemma}\label{Lemma-1}
	Let $h$ and $k$ be integers with $3 \leq h \leq k-1$. Let $A = \{a_{1}, a_{2}, \ldots, a_{k}\}$ be a finite set of $k$ positive integers, where $a_1 < a_2 < \cdots < a_k$. Let $A_{h+1} = \{a_{1}, a_{2}, \ldots, a_{h+1}\} \subseteq A$. If $\left|h^{\wedge}_{\pm}A_{h+1}\right| \geq (h+1)^2 + t$, where $t \geq 0$,  then
	\begin{equation*}
		\left|h^{\wedge}_{\pm}A\right| \geq 2hk - h^2 + 1 + t.
	\end{equation*}
\end{lemma}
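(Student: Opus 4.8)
The plan is to exploit the symmetry of $h^{\wedge}_{\pm}A$ about $0$ together with the threshold $S := \max\left(h^{\wedge}_{\pm}A_{h+1}\right) = a_2 + a_3 + \cdots + a_{h+1}$. The idea is that the set $A_{h+1}$ already supplies at least $(h+1)^2 + t$ elements inside the interval $[-S,S]$, while the larger elements $a_{h+2}, \ldots, a_k$ must force exactly $h(k-h-1)$ brand-new elements lying strictly above $S$, and hence (by negating all signs) an equal number strictly below $-S$. Adding these contributions gives $(h+1)^2 + t + 2h(k-h-1) = 2hk - h^2 + 1 + t$, which is the claimed bound.

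Concretely, I would exhibit three pairwise-disjoint families inside $h^{\wedge}_{\pm}A$, separated by the regions $[-S,S]$, $(S,\infty)$, and $(-\infty,-S)$. The first is $h^{\wedge}_{\pm}A_{h+1}$ itself, realized in $h^{\wedge}_{\pm}A$ by assigning coefficient $0$ to $a_{h+2}, \ldots, a_k$; since all $a_i$ are positive, its maximum is $S$ and, being symmetric, it lies in $[-S,S]$ and has at least $(h+1)^2 + t$ elements. The second family consists of a strictly increasing chain of ``pure positive'' $h$-element subset sums lying in $(S,\infty)$, and the third is the set of negatives of the second family, lying in $(-\infty,-S)$. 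Because the three regions are disjoint, the three families are disjoint, and the whole argument reduces to showing the second family has exactly $h(k-h-1)$ members.

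The heart of the matter is the construction and counting of that chain. For an $h$-subset $I = \{i_1 < i_2 < \cdots < i_h\} \subseteq \{1,\ldots,k\}$ I write $\sigma(I) = \sum_{m=1}^{h} a_{i_m} \in h^{\wedge}A \subseteq h^{\wedge}_{\pm}A$ and define the displacement $N(I) = \sum_{m=1}^{h}(i_m - m)$. Replacing a single index $i_m$ by $i_m + 1$, whenever this keeps $I$ a valid increasing set, raises $N(I)$ by exactly $1$ and strictly increases $\sigma(I)$. Starting from $I_0 = \{2,3,\ldots,h+1\}$, where $\sigma(I_0) = S$, one reaches $I_1 = \{k-h+1,\ldots,k\}$ by such single increments --- for instance, first raising the top index up to $k$, then the next index up to $k-1$, and so on --- with every intermediate set valid and every sum strictly larger than the previous one. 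The number of steps is $N(I_1) - N(I_0) = h(k-h) - h = h(k-h-1)$, so this yields a strictly increasing chain $\sigma(I_0) < \sigma(J_1) < \cdots < \sigma(J_{h(k-h-1)})$ whose last $h(k-h-1)$ values are distinct elements of $h^{\wedge}_{\pm}A$, all strictly greater than $S$.

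The main obstacle, and the place demanding care, is twofold: checking that the explicit single-increment path from $I_0$ to $I_1$ passes only through valid strictly-increasing $h$-subsets (so each $\sigma(J)$ is genuinely a member of $h^{\wedge}_{\pm}A$ with the sums strictly increasing), and verifying that the displacement count $N(I_1)-N(I_0) = h(k-h-1)$ matches the numerical gap $2hk - h^2 + 1 - (h+1)^2$ exactly. Once $S$ is fixed and this bookkeeping is settled, disjointness of the three families is immediate and the conclusion is a one-line addition; the degenerate case $k = h+1$ simply gives an empty chain, so that $h^{\wedge}_{\pm}A = h^{\wedge}_{\pm}A_{h+1}$ and the bound collapses to the hypothesis.
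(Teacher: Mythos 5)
Your proof is correct, and it uses the same underlying decomposition as the paper: the symmetric block $h^{\wedge}_{\pm}A_{h+1}$ sitting in $[-S,S]$ with $S=a_2+\cdots+a_{h+1}$, plus elements of the ordinary restricted sumset above $S$ and their negatives below $-S$. The one genuine difference is how the outer contribution is counted. The paper sets $A'=\{a_2,\ldots,a_k\}$, observes that $h^{\wedge}A'$ and $-h^{\wedge}A'$ each meet $h^{\wedge}_{\pm}A_{h+1}$ in exactly one point ($\pm S$), and invokes Nathanson's direct theorem to get $|h^{\wedge}A'|\ge h(k-1)-h^2+1$, i.e.\ $h(k-h-1)$ new elements on each side. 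You instead prove that same count by hand, via the strictly increasing chain of $h$-subset sums from $I_0=\{2,\ldots,h+1\}$ to $\{k-h+1,\ldots,k\}$ indexed by the displacement statistic $N(I)$; your bookkeeping $N(I_1)-N(I_0)=h(k-h)-h=h(k-h-1)$ and the arithmetic $(h+1)^2+t+2h(k-h-1)=2hk-h^2+1+t$ both check out, and your explicit single-increment path does pass only through valid increasing $h$-subsets with strictly increasing sums. What your version buys is self-containedness (no appeal to the external theorem, which is anyway proved by essentially this chain argument); what the paper's version buys is brevity and the fact that the same template transfers verbatim to Lemma 2 and Corollary 1, where the citation of Nathanson's theorem is reused. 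Minor quibble: the new elements are forced to number \emph{at least}, not ``exactly,'' $h(k-h-1)$; your construction only needs the lower bound, so nothing breaks.
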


\begin{proof}
	Let $A^{\prime} = \{a_{2}, a_{3},\ldots,a_{k}\}$. Then $(-h^{\wedge}A^{\prime}) \cup h^{\wedge}_{\pm}A_{h+1} \cup h^{\wedge}A^{\prime} \subseteq h^{\wedge}_{\pm}A$. Since
	\[h^{\wedge}_{\pm}A_{h+1} \cap h^{\wedge}A^{\prime} = \{a_{2} + a_{3} +  \cdots+a_{h+1}\}\]
	and
	\[h^{\wedge}_{\pm}A_{h+1} \cap (-h^{\wedge}A^{\prime}) = \{-(a_{2} + a_{3} +  \cdots+a_{h+1})\},\]
	it follows from Theorem \ref{Nathanson Direct Inverse  Thm} that
	\begin{equation*}
		\left| h^{\wedge}_{\pm}A\right| \geq \left| h^{\wedge}_{\pm}A_{h+1}\right| + 2\left|  h^{\wedge}A^{\prime}\right|-2  \geq (h+1)^{2} +t +2 (h(k-1)- h^{2}+1) -2 = 2hk - h^2 + 1 + t.
	\end{equation*}
	This proves the lemma.
\end{proof}

A similar argument proves the following lemma.

\begin{lemma}\label{Lemma-2}		
	Let $h$ and $k$ be integers with $k \geq 5$ and $3 \leq h \leq k-1$. Let $A = \{a_{0}, a_{1}, a_{2}, \ldots, a_{k-1}\}$ be a finite set of $k$ nonnegative integers, where $0 = a_{0} < a_1 < a_2 < \cdots < a_{k-1}$. Let $A_{h} = \{a_{0}, a_{1}, a_{2}, \ldots, a_{h}\} \subseteq A$. If $\left|h^{\wedge}_{\pm}A_{h}\right| \geq h(h+1) + 1+ t$, where $t \geq 0$, then
	\begin{equation*}
		\left|h^{\wedge}_{\pm}A\right| \geq 2hk-h(h+1) + 1 +t.
	\end{equation*}
\end{lemma}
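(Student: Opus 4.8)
The plan is to mirror the proof of Lemma \ref{Lemma-1}, with the set $A_{h+1}$ of positive integers replaced by the set $A_h = \{a_0, a_1, \ldots, a_h\}$ containing $0$, and the complementary set adjusted accordingly. First I would set $A' = \{a_1, a_2, \ldots, a_{k-1}\}$, a set of $k-1$ positive integers. Since $A' \subseteq A$ and every element of $A'$ is positive, both $h^{\wedge}A'$ and $-h^{\wedge}A'$ lie inside $h^{\wedge}_{\pm}A$ (take all coefficients equal to $+1$, respectively all equal to $-1$, on an $h$-element subset of $A'$), while $h^{\wedge}_{\pm}A_h \subseteq h^{\wedge}_{\pm}A$ because $A_h \subseteq A$. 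Hence
\[(-h^{\wedge}A') \cup h^{\wedge}_{\pm}A_h \cup h^{\wedge}A' \subseteq h^{\wedge}_{\pm}A,\]
and the argument reduces to bounding the cardinality of the left-hand union from below by inclusion--exclusion.

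The key step is to identify the pairwise intersections of these three sets, and this is where the hypothesis $a_0 = 0$ plays its role. The crucial observation is that the maximum element of $h^{\wedge}_{\pm}A_h$ is exactly $a_1 + a_2 + \cdots + a_h$: to satisfy $\sum_{i=0}^{h} |\lambda_i| = h$ while maximizing $\sum_{i=0}^{h} \lambda_i a_i$, one is forced to take $\lambda_i = 1$ for each of the $h$ positive elements $a_1, \ldots, a_h$, since spending any of the budget $h$ on the zero element $a_0$ or on a negative sign can only decrease the value. On the other hand, the minimum element of $h^{\wedge}A'$ is also $a_1 + \cdots + a_h$, being the smallest sum of $h$ distinct elements of $A'$. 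Consequently
\[h^{\wedge}_{\pm}A_h \cap h^{\wedge}A' = \{a_1 + a_2 + \cdots + a_h\},\]
and, by the symmetry $x \mapsto -x$,
\[h^{\wedge}_{\pm}A_h \cap (-h^{\wedge}A') = \{-(a_1 + a_2 + \cdots + a_h)\}.\]
Moreover $h^{\wedge}A' \cap (-h^{\wedge}A') = \emptyset$, since the former set consists of positive integers and the latter of negative integers.

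Combining these facts with $\left|-h^{\wedge}A'\right| = \left|h^{\wedge}A'\right|$, inclusion--exclusion yields
\[\left|h^{\wedge}_{\pm}A\right| \geq \left|h^{\wedge}_{\pm}A_h\right| + 2\left|h^{\wedge}A'\right| - 2.\]
Finally I would apply Theorem \ref{Nathanson Direct Inverse  Thm} to the $(k-1)$-element set $A'$, which is legitimate since $1 \leq h \leq k-1 = |A'|$, to obtain $\left|h^{\wedge}A'\right| \geq h(k-1) - h^2 + 1$, and substitute the hypothesis $\left|h^{\wedge}_{\pm}A_h\right| \geq h(h+1) + 1 + t$. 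A routine simplification then collapses the right-hand side to the claimed bound $2hk - h(h+1) + 1 + t$. I expect the only genuinely delicate point to be the verification that the two nonempty intersections are singletons; everything hinges on the value $a_1 + \cdots + a_h$ being simultaneously the maximum of $h^{\wedge}_{\pm}A_h$ and the minimum of $h^{\wedge}A'$, which is exactly what placing $0$ in $A_h$ guarantees.
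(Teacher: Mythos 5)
Your proposal is correct and is essentially the argument the paper intends: the paper proves Lemma \ref{Lemma-2} by saying ``a similar argument'' to Lemma \ref{Lemma-1}, and your decomposition $(-h^{\wedge}A') \cup h^{\wedge}_{\pm}A_h \cup h^{\wedge}A'$ with $A' = A \setminus \{0\}$, the singleton intersections at $\pm(a_1+\cdots+a_h)$, and the application of Theorem \ref{Nathanson Direct Inverse  Thm} to $A'$ is exactly that adaptation. Your justification that $\max(h^{\wedge}_{\pm}A_h) = a_1+\cdots+a_h = \min(h^{\wedge}A')$ is the right key point and is correct.
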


\subsection{Some special cases}\label{results-special-sets}

First we verify the conjectures for finite arithmetic progressions.

\begin{theorem}\label{thm 1}
	Let $h \geq 3$ be an integer, and let  $A$ be a $(h+1)$-term arithmetic progression of positive integers with common difference $d$, then
	\[
	\left|h^{\wedge}_{\pm}A\right| \geq
	\begin{cases}
		(h + 1)^2, & \mbox{if } d = 2 \min(A); \\
		(h + 1)^2 + 1, & \mbox{otherwise}.
	\end{cases}
	\]
	Furthermore, $\left|h^{\wedge}_{\pm}A\right| = (h+1)^{2}$ if and only if $d = 2\min(A)$.
	
\end{theorem}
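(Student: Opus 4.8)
The plan is to coordinatize $h^{\wedge}_{\pm}A$ by the two linear statistics of the coefficient vector and to analyze it one ``layer'' at a time. Write $A=\{a_1<a_2<\cdots<a_{h+1}\}$ with $a_i=m+(i-1)d$, where $m=\min(A)>0$ and $d\geq 1$. Since exactly one coefficient $\lambda_i$ vanishes and the remaining $h$ are $\pm 1$, every element of $h^{\wedge}_{\pm}A$ has the form $S=mP+dQ$, where $P=\sum_i\lambda_i$ and $Q=\sum_i\lambda_i(i-1)$. Here $P$ runs over the $h+1$ values $\{-h,-h+2,\ldots,h\}$, all congruent to $h$ modulo $2$, and for a fixed $P$ with $p=(h+P)/2$ plus-signs and $q=(h-P)/2$ minus-signs the quantity $Q$ is a signed partial sum of the consecutive weights $0,1,\ldots,h$.

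First I would pin down, for each admissible $P$, the set $\mathcal{Q}(P)$ of attainable values of $Q$. Assigning $+1$ to the largest weights, $-1$ to the smallest, and $0$ to the single leftover weight yields the extremes
\[
Q_{\max}(P)=\frac{h(h+1)}{2}-q^{2},\qquad Q_{\min}(P)=p^{2}-\frac{h(h+1)}{2}.
\]
A one-swap argument (moving the zero past an adjacent sign, or exchanging an adjacent $+$ and $-$) changes $Q$ by exactly $1$, so $\mathcal{Q}(P)$ is the \emph{full} integer interval $I_P=[Q_{\min}(P),Q_{\max}(P)]$, of length $|I_P|=h+1+\tfrac{h^{2}-P^{2}}{2}$. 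Hence the layer $P$ contributes the arithmetic progression $mP+d\cdot I_P$ of common difference $d$, and since $d\neq 0$ distinct values of $Q$ in a fixed layer give distinct elements. Thus $h^{\wedge}_{\pm}A$ is a union of $h+1$ arithmetic progressions of common difference $d$, and summing the lengths gives $\sum_P|I_P|=(h+1)^{2}+E$ with $E=\tfrac12\big((h+1)h^{2}-\sum_P P^{2}\big)>0$.

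The case $d=2\min(A)$ is the maximally overlapping one. Two layers $P_1,P_2$ can share a value only if $mP_1\equiv mP_2\pmod d$, and for $d=2m$ this holds for every pair, all differences $P_1-P_2$ being even; so all layers lie in one residue class modulo $d$. Dividing by $m$, each layer becomes a block of consecutive integers of parity $h$ in the variable $P+2Q$, and using the explicit endpoints above one checks that consecutive layers ($P$ and $P-2$) overlap. Their union is therefore exactly the set of integers of parity $h$ in $[-(h^{2}+2h),\,h^{2}+2h]$, which has cardinality $(h+1)^{2}$; this gives $|h^{\wedge}_{\pm}A|=(h+1)^{2}$ when $d=2m$.

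Finally I would count the union exactly for general $d$. Factoring $S=g\,(m'P+d'Q)$ with $g=\gcd(m,d)$ and $\gcd(m',d')=1$, distinct elements correspond to distinct values of $m'P+d'Q$; grouping the layers by the residue of $m'P$ modulo $d'$ decomposes the union into unions of integer intervals, whose sizes are computable from $|I_P|$ and the endpoint formulas. The heart of the argument, and the step I expect to be the main obstacle, is to show that the total overlap equals its maximal value $E$ \emph{only} when $(m',d')=(1,2)$, i.e. $d=2m$, so that every other common difference yields strictly more than $(h+1)^{2}$ distinct values. One sees the special role of $d=2m$ locally: the top element $m(h-2)+d\big(\tfrac{h(h+1)}{2}-1\big)$ of the layer $P=h-2$ lies in the progression of the layer $P=h$ precisely when $2m=(t-1)d$ for some admissible $t$, which for $d=2m$ forces $t=2$ but generically fails; the delicate part is to upgrade such local observations into a clean proof that for $d\neq 2m$ at least one coincidence present at $d=2m$ is genuinely broken against \emph{all} other layers, producing an extra value and hence $|h^{\wedge}_{\pm}A|\geq(h+1)^{2}+1$. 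Combining the two cases proves both the stated bound and the ``if and only if.''
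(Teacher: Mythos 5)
Your coordinatization $S=mP+dQ$ is sound, and the layer analysis is correctly set up: the endpoint formulas for $Q_{\max}(P),Q_{\min}(P)$ check out, each $\mathcal{Q}(P)$ is indeed a full integer interval (though note that exchanging an adjacent $+$ and $-$ changes $Q$ by $2$, not $1$; the interval property needs the zero-moves, which change $Q$ by $\pm 1$, to fill in), and the count $(h+1)^2$ for $d=2m$ follows once one verifies that consecutive layers overlap inside the single residue class. However, there is a genuine gap: the entire content of the lower bound for $d\neq 2\min(A)$ --- namely that the union of the $h+1$ progressions $mP+d\cdot I_P$ then has at least $(h+1)^2+1$ elements --- is not proved. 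You explicitly flag this as ``the main obstacle'' and ``the delicate part,'' and what you offer in its place (grouping layers by the residue of $m'P$ modulo $d'$, plus one local observation about the top element of the layer $P=h-2$) is a strategy, not an argument. Controlling the total overlap of $h+1$ intervals with incommensurate offsets across all pairs of layers, uniformly in $m$, $d$ and $h$, is exactly where the difficulty of the theorem lives, so the proposal reformulates the problem rather than solving it.

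For comparison, the paper avoids any global overlap analysis by inducting on $h$: it verifies $h=3$ by writing down an explicit increasing chain of $16$ or $17$ elements of $3^{\wedge}_{\pm}A$ (with the single possible collision $-a_3-a_2-a_1=-a_4-a_2+a_1$ occurring precisely when $d=2a_1$), then uses $(h-1)^{\wedge}_{\pm}A_h+a_{h+1}\subseteq h^{\wedge}_{\pm}A$ and exhibits $2h+1$ explicitly ordered new elements $S_i,T_i,U_i$ lying strictly below $\min((h-1)^{\wedge}_{\pm}A_h+a_{h+1})$; the matching upper bound for $d=2a_1$ is a two-line parity argument. If you want to salvage your approach, the cleanest route is probably to isolate a single pair of layers (say $P=h$ and $P=h-2$) and show directly that when $d\neq 2m$ some element of one layer misses every other layer, but as written that step is missing.
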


\begin{proof}
	Let $A = \{a_{1}, a_{2}, \ldots, a_{h+1}\}$, where $a_{i} = a_{1} + (i-1)d$ for $i = 1, 2, \ldots, h+1$. Set  $A_{h} = A \setminus\{a_{h+1}\}$. We shall use induction on $h$ to prove the result. If $h=3$, then $A = \{a_{1}, a_{2}, a_{3}, a_{4}\}$ with $a_{2} - a_{1} = a_{3} - a_{2} = a_{4} - a_{3} = d$. Consider the following increasing sequence of the  elements of $3^{\wedge}_{\pm}A$.
	\begin{multline*}
		-a_{4} - a_{3} - a_{2} < -a_{4} - a_{3} - a_{1} < -a_{4} - a_{2} - a_{1} < \{ -a_{3} - a_{2} - a_{1} \ \text{or}  \ -a_{4} - a_{2} + a_{1} \} \\
		<  -a_{3} - a_{2} + a_{1} < -a_{4} + a_{2} - a_{1} < -a_{3} + a_{2} - a_{1} < -a_{2} + a_{4} - a_{3} < -a_{1} + a_{3} - a_{2} \\
		< a_{1} -a_{2} +a_{3} < a_{1} - a_{2} +  a_{4} < -a_{1} + a_{2} + a_{3} < a_{1} + a_{2} + a_{3} < a_{1} + a_{2} + a_{4}\\
		< a_{1} + a_{3} + a_{4} < a_{2} + a_{3} + a_{4}.
	\end{multline*}
	Since $d = 2a_{1}$ if and only if  $-a_{3} - a_{2} - a_{1} = -a_{4} - a_{2} + a_{1}$, it follows that
	\[
	\left|3^{\wedge}_{\pm}A\right|\geq
	\begin{cases}
		16, & \mbox{if } d=2a_{1};\\
		17, & \mbox{if } d\neq 2a_{1}.
	\end{cases}
	\]
	Now assume that the result holds for  $h-1 \geq 3$. Then
	\[\left|(h-1)^{\wedge}_{\pm}A_{h}\right| \geq
	\begin{cases}
		h^{2}, & \mbox{if }  d = 2a_{1};\\
		h^{2} + 1, & \mbox{if } d \neq 2a_{1}.
	\end{cases}
	\]
	Since $(h-1)^{\wedge}_{\pm}A_{h} + a_{h+1} \subseteq h^{\wedge}_{\pm}A$ and $|(h-1)^{\wedge}_{\pm}A_{h} + a_{h+1}| = |(h-1)^{\wedge}_{\pm}A_{h}|$, we have at least $h^2$ elements in $h^{\wedge}_{\pm}A$ if $d = 2a_1$, and at least $h^2 + 1$ elements in $h^{\wedge}_{\pm}A$ if $d \neq 2a_1$.   Now, we construct some more elements of $h^{\wedge}_{\pm}A$ distinct form the elements of $(h-1)^{\wedge}_{\pm}A_{h} + a_{h+1}$.  For each $i \in [3, h+1]$, let	
	\[S_{i} = -a_{2} + a_{i} -\left( \sum_{j = 3, j \neq i}^{h+1}a_{j} \right).\]
	For each $i \in [3, h]$, let
	\[T_{i} = -a_{1}  + a_{i}  - \left(\sum_{j = 3, j \neq i}^{h+1} a_{j}\right).\]
	Let
	\[U_{0} = -\sum_{j=2}^{h+1}a_{j} \ \text{and} \ U_{1} = -\sum_{j=1, j\neq 2}^{h+1}a_{j},\]
	and for each $i \in [2,5]$, let
	\[U_{i} = a_{1} - \left(\sum_{j = 2, j \neq i}^{h+1}a_{j} \right). \]
	It is easy to see that
	\begin{multline*}
		\min ((h-1)^{\wedge}_{\pm}A_{h} + a_{h+1} ) = S_{h+1} > T_{h} > S_{h} > T_{h-1} > S_{h-1} > \cdots > T_{3} \\
		>S_{3} = U_{5} > U_{4} > U_{3} > U_{2} > U_{1} > U_{0} = \min(h^{\wedge}_{\pm}A).
	\end{multline*}
	Hence
	\[
	\left|h^{\wedge}_{\pm}A\right| \geq
	\begin{cases}
		(h+1)^{2}, & \mbox{if } d=2a_{1};\\
		(h+1)^{2} + 1, & \mbox{if } d\neq 2a_{1}.
	\end{cases}
	\]
	If $d = 2a_{1}$, then $A = a_{1} \ast \{1, 3, 5, \ldots,  2h+1\}$. Therefore, if $h$ is an odd positive integer, then  $h^{\wedge}_{\pm}A$ contains only odd multiples of $a_{1}$  and  if $h$ is an even positive integer, then $h^{\wedge}_{\pm}A$ contains only  even multiples of $a_{1}$. Since $h^{\wedge}_{\pm}A \subseteq a_{1} \ast [-(h+1)^2 +1, (h+1)^2 -1]$, we get  $\left| h^{\wedge}_{\pm}A \right| \leq (h+1)^2$. This completes the proof of the theorem.
\end{proof}

\begin{corollary}\label{Coro-1}
	Let $h \geq 3$ be an integer. Let $A = \{a_{1}, a_{2}, \ldots, a_{k}\}$ be an arithmetic progression of length $k \geq h+1$ integers with common difference $d$, where $0<a_1 < a_2 < \cdots < a_k$. Then
	\[
	\left|h^{\wedge}_{\pm}A\right| \geq
	\begin{cases}
		2hk - h^{2} +1, & \mbox{if } d=2a_{1};\\
		2hk - h^{2} + 2, & otherwise.
	\end{cases}
	\]
	Furthermore, $\left|h^{\wedge}_{\pm}A\right| = 2hk - h^{2} +1$ if and only if $d = 2\min(A)$.
\end{corollary}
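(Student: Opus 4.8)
The plan is to deduce this from Theorem \ref{thm 1} and Lemma \ref{Lemma-1} by passing to the initial segment of the progression. First I would set $A_{h+1} = \{a_{1}, a_{2}, \ldots, a_{h+1}\}$. Since $A$ is an arithmetic progression with common difference $d$, so is $A_{h+1}$, and moreover $\min(A_{h+1}) = a_{1} = \min(A)$. Applying Theorem \ref{thm 1} to $A_{h+1}$ then gives $\left|h^{\wedge}_{\pm}A_{h+1}\right| \geq (h+1)^{2}$ when $d = 2a_{1}$, and $\left|h^{\wedge}_{\pm}A_{h+1}\right| \geq (h+1)^{2} + 1$ when $d \neq 2a_{1}$. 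In the notation of Lemma \ref{Lemma-1} this means we may take $t = 0$ in the first case and $t = 1$ in the second, and both uses of the lemma are legitimate since $h \geq 3$ and $k \geq h+1$. Invoking Lemma \ref{Lemma-1} immediately yields $\left|h^{\wedge}_{\pm}A\right| \geq 2hk - h^{2} + 1$ when $d = 2a_{1}$ and $\left|h^{\wedge}_{\pm}A\right| \geq 2hk - h^{2} + 2$ when $d \neq 2a_{1}$, which are exactly the two stated lower bounds.

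It remains to establish the equality characterization. For one direction, if $d \neq 2\min(A)$ then the lower bound just obtained is $2hk - h^{2} + 2$, which is strictly larger than $2hk - h^{2} + 1$; hence $\left|h^{\wedge}_{\pm}A\right| = 2hk - h^{2} + 1$ forces $d = 2\min(A)$. For the reverse direction, suppose $d = 2a_{1}$. Then $a_{i} = a_{1}(2i-1)$, so $A = a_{1} \ast \{1, 3, \ldots, 2k-1\}$ consists of odd multiples of $a_{1}$. I would then bound $\left|h^{\wedge}_{\pm}A\right|$ from above: every element of $h^{\wedge}_{\pm}A$ is $a_{1}$ times a signed sum of exactly $h$ odd integers, hence $a_{1}$ times an integer congruent to $h$ modulo $2$, and its absolute value is at most the sum of the $h$ largest elements, namely $a_{1}(2hk - h^{2})$. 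Writing $M = 2hk - h^{2}$ and noting that $M \equiv h \pmod{2}$, the integers $n$ with $|n| \leq M$ and $n \equiv h \pmod{2}$ number exactly $M + 1 = 2hk - h^{2} + 1$. Thus $\left|h^{\wedge}_{\pm}A\right| \leq 2hk - h^{2} + 1$, and together with the matching lower bound this gives equality.

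The deduction of the lower bounds is essentially immediate from the two cited results, so the only point requiring a little care is the upper bound in the equality case. There one must combine the parity constraint with the extremal range so that the count of admissible values of $n$ matches the lower bound exactly; this closing argument mirrors the one used at the end of the proof of Theorem \ref{thm 1}.
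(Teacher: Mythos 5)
Your proposal is correct and follows essentially the same route as the paper: the paper's own proof of the corollary carries out, inline, exactly the decomposition that Lemma \ref{Lemma-1} encapsulates, combined with Theorem \ref{thm 1} applied to $A_{h+1}$, and it settles the case $d=2a_{1}$ by the same parity-plus-range upper bound that appears at the end of the proof of Theorem \ref{thm 1}. Your derivation of the forward implication directly from the dichotomy of lower bounds is a slight (and clean) shortcut compared with the paper's tracing of equality back through the chain of inequalities, but it is not a substantively different argument.
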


\begin{proof}
	Let $A_{h+1}= \{a_{1}, a_{2}, \ldots, a_{h+1}\}$ and $A^{\prime} = \{a_{2}, a_{3}, \ldots, a_{k}\}$. Then $(-h^{\wedge}A^{\prime}) \cup h^{\wedge}_{\pm}A_{h+1} \cup h^{\wedge}A^{\prime} \subseteq h^{\wedge}_{\pm}A$. Note that
	\[h^{\wedge}_{\pm}A_{h+1} \cap h^{\wedge}A^{\prime} = \{a_{2} + a_{3} +  \cdots+a_{h+1}\}\]
	and
	\[h^{\wedge}_{\pm}A_{h+1} \cap (-h^{\wedge}A^{\prime}) = \{-(a_{2} + a_{3} +  \cdots+a_{h+1})\}.\] 
	Using Theorem \ref{thm 1} and  Theorem \ref{Nathanson Direct Inverse  Thm}, we have
	\begin{align}\label{coro-eq}
		\left| h^{\wedge}_{\pm}A\right| &\geq \left| h^{\wedge}_{\pm}A_{h+1}\right| + 2\left|  h^{\wedge}A^{\prime}\right|-2 \\
		&\geq 	\begin{cases}
			(h+1)^{2} + 2 (h(k-1)- h^{2}+1) -2, & \mbox{if } d=2a_{1};\\
			(h+1)^{2} + 1 + 2 (h(k-1)- h^{2}+1) -2, & otherwise.
		\end{cases} \nonumber\\ 
		&=
		\begin{cases}
			2hk - h^{2} +1, & \mbox{if } d=2a_{1};\\
			2hk - h^{2} + 2, & otherwise.
		\end{cases}\nonumber
	\end{align}
	Now, if $d=2a_{1}$, then $A = a_{1} \ast \{1, 3, 5, \ldots,  2k-1\}$ and so $\left|h^{\wedge}_{\pm}A\right| = 2hk - h^{2} +1$.
	If $\left|h^{\wedge}_{\pm}A\right| = 2hk - h^{2} +1$ , then by (\ref{coro-eq}), we get $\left| h^{\wedge}_{\pm}A_{h+1}\right| = (h+1)^{2}$ and so $d=2a_{1}$.
\end{proof}

\begin{theorem}
	Let $h$ and $k$ be integers such that $4 \leq h\leq k-1$.	If $A$ is an arithmetic progression of $k$ nonnegative integers with $0 \in A$, then $|h^{\wedge}_{\pm}A| = 2hk-h(h+1) + 1$.
\end{theorem}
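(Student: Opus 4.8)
The plan is to sandwich $|h^{\wedge}_{\pm}A|$ between an easy upper bound and the lower bound furnished by Lemma \ref{Lemma-2}, after first normalizing the progression. Since $A$ consists of nonnegative integers, is an arithmetic progression, and contains $0$, the term $0$ must be its least element; hence $A = d \ast [0,k-1]$ for some positive integer $d$. Because $h^{\wedge}_{\pm}(d \ast B) = d \ast (h^{\wedge}_{\pm}B)$ and multiplication by $d$ is injective, we have $|h^{\wedge}_{\pm}A| = |h^{\wedge}_{\pm}[0,k-1]|$, so I may assume $A = [0,k-1]$. For the upper bound, every element of $h^{\wedge}_{\pm}[0,k-1]$ is a signed sum of exactly $h$ of the integers $0,1,\ldots,k-1$, hence lies in $[-M,M]$, where $M = (k-1)+(k-2)+\cdots+(k-h) = hk-\tfrac{h(h+1)}{2}$ is attained by assigning $+1$ to the $h$ largest elements. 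Thus $|h^{\wedge}_{\pm}A| \le 2M+1 = 2hk-h(h+1)+1$.

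For the matching lower bound I would apply Lemma \ref{Lemma-2} with $A_h = \{0,1,\ldots,h\} = [0,h]$; this is legitimate since $k \ge h+1 \ge 5$. The lemma reduces the whole problem to establishing $|h^{\wedge}_{\pm}[0,h]| \ge h(h+1)+1$. As $h^{\wedge}_{\pm}[0,h] \subseteq \big[-\tfrac{h(h+1)}{2},\tfrac{h(h+1)}{2}\big]$ automatically caps this cardinality at $h(h+1)+1$, the inequality in fact forces $|h^{\wedge}_{\pm}[0,h]| = h(h+1)+1$, i.e. $t = 0$ in Lemma \ref{Lemma-2}, giving $|h^{\wedge}_{\pm}A| \ge 2hk-h(h+1)+1$. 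Combined with the upper bound, this yields the claimed equality.

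The crux, and the step I expect to be the main obstacle, is therefore the auxiliary claim that $h^{\wedge}_{\pm}[0,h]$ is the \emph{full} interval $\big[-T_h,T_h\big]$ with $T_h = \tfrac{h(h+1)}{2}$, for all $h \ge 3$. Some care is needed because this is genuinely false for $h=2$ (there $0$ is missing from the sumset), so the base case cannot be lowered below $h=3$ and the argument cannot ignore parity. I would prove the claim by induction on $h$, taking $h=3$ as the base case, verified by direct enumeration (one checks $3^{\wedge}_{\pm}[0,3] = [-6,6]$).

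For the inductive step, suppose $h^{\wedge}_{\pm}[0,h] = [-T_h,T_h]$ with $h \ge 3$, and split the elements of $h^{\wedge}_{\pm}[0,h+1]$ according to the coefficient $\lambda_{h+1} \in \{-1,0,1\}$ of the largest element $h+1$. The cases $\lambda_{h+1} = \pm 1$ show that $h^{\wedge}_{\pm}[0,h+1]$ contains both $h^{\wedge}_{\pm}[0,h] + (h+1)$ and $h^{\wedge}_{\pm}[0,h] - (h+1)$, which by the inductive hypothesis equal the integer intervals $[\,h+1-T_h,\,T_{h+1}\,]$ and $[\,-T_{h+1},\,T_h-(h+1)\,]$. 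Their union is all of $[-T_{h+1},T_{h+1}]$ exactly because $h+1-T_h \le T_h - h$, i.e. $2h+1 \le h(h+1)$, which holds for $h \ge 2$. Together with the trivial containment $h^{\wedge}_{\pm}[0,h+1] \subseteq [-T_{h+1},T_{h+1}]$, this closes the induction and completes the proof.
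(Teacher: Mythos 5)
Your proof is correct, but it follows a genuinely different route from the paper's. The paper also normalizes to $A=[0,k-1]$, but then argues directly on the full set: it observes that $h^{\wedge}_{\pm}A$ contains the disjoint intervals $h^{\wedge}A=\left[\tfrac{h(h-1)}{2},\,hk-\tfrac{h(h+1)}{2}\right]$ and $-h^{\wedge}A$, and then writes out a long explicit increasing chain of signed sums that fills the entire gap between $\max(h^{\wedge}(-A))$ and $\min(h^{\wedge}A)$, concluding that $h^{\wedge}_{\pm}A$ is the full interval. You instead combine the trivial upper bound $|h^{\wedge}_{\pm}A|\le 2M+1$ with Lemma \ref{Lemma-2}, which reduces everything to the single claim $h^{\wedge}_{\pm}[0,h]=[-T_h,T_h]$, and you prove that by a short induction on $h$ (base case $h=3$ by enumeration, inductive step via the overlap $h+1-T_h\le T_h-h$, valid for $h\ge 2$). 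Your route reuses machinery the paper already has, replaces a lengthy and hard-to-check explicit listing with a two-line overlap computation, and still recovers the full-interval structure as a byproduct of the cardinality count; the paper's version is self-contained on the large set but much more laborious to verify. Your observation that the interval claim fails for $h=2$ is a worthwhile sanity check that the paper does not make explicit. One notational slip: in the inductive step you write $h^{\wedge}_{\pm}[0,h+1]$ where you clearly mean $(h+1)^{\wedge}_{\pm}[0,h+1]$ (fixing $\lambda_{h+1}=\pm 1$ there leaves exactly $h$ nonzero coefficients among $\{0,1,\dots,h\}$, i.e.\ an element of $h^{\wedge}_{\pm}[0,h]$, and your target interval is $[-T_{h+1},T_{h+1}]$); this should be corrected but does not affect the argument.
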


\begin{proof}
	If $A$ is an arithmetic progression of $k$ nonnegative integers with $0 \in A$, then $A = d\ast [0,k-1]$, where $d$ is the common difference of the arithmetic progression. Since cardinality of  restricted $h$-fold signed sumset is dilation invariant, we may assume $A = [0,k-1]$. Therefore $h^{\wedge}_{\pm}A$ contains disjoint sets  $h^{\wedge}A$ and  $h^{\wedge}(-A)$. It is easy to see that $h^{\wedge}A = \left[\dfrac{h(h-1)}{2}, hk-\dfrac{h(h+1)}{2}\right]$ and    $h^{\wedge}(-A) = - (h^{\wedge}A )$.  Now we construct some more elements of  $h^{\wedge}_{\pm}A$ different from the elements of  $h^{\wedge}A $ and  $h^{\wedge}(-A)$. It is easy to see that
	\begin{align*}
		\max (h^{\wedge}(-A)) &= 0-1-2- \cdots -(h-2)-(h-1)\\
		&<0+1-2- \cdots-(h-2)-h\\
		&<0+1-2- \cdots-(h-2)-(h-1)\\
		&<0-1+2- \cdots-(h-2)-h\\ 
		&<0-1+2- \cdots-(h-2)-(h-1)\\
		&\vdots\\ 
		&<0-1- \cdots-(h-3)+(h-2)-(h-1)\\
		&<0+1- \cdots-(h-3)+(h-2)-h\\
		&<0+1- \cdots-(h-3)+(h-2)-(h-1)\\
		&<0-1+2 \cdots-(h-3)+(h-2)-h\\
		&<0-1+2 \cdots-(h-3)+(h-2)-(h-1)\\
		&\vdots\\
		&<0-1 \cdots +(h-3)+(h-2)-h\\
		&\vdots\\
		&<0+1+2+ \cdots + (h-2) -(h-1) \\
		&=  0-1+2+ \cdots+(h-3) - (h-2) +(h-1)\\
		&<0-1+2+ \cdots+(h-3) - (h-2) + h\\
		&<0-1+2+ \cdots-(h-3) + (h-2) +(h-1)\\
		&\vdots	\\
		&<0-1-2 + \cdots + (h-2) + h\\
		&<0+1-2 + \cdots + (h-2) + (h-1)\\
		&<0+1-2 + \cdots + (h-2) + h\\
				\end{align*}
	\begin{align*}
		&<0-1+2 + \cdots + (h-2) + (h-1)\\
		&<0-1+2 + \cdots + (h-2) + h <0+1+2 + \cdots + (h-2) + (h-1) = \min(h^{\wedge}A).
	\end{align*}
	\noindent		Hence $h^{\wedge}_{\pm}A = \left[-hk+\dfrac{h(h+1)}{2}, hk-\dfrac{h(h+1)}{2} \right]$ and $|h^{\wedge}_{\pm}A| = 2hk-h(h+1) + 1$.
\end{proof}

Next theorem is a partial inverse theorem.

\begin{theorem}\label{Partial inverse thm}
	Let $h$ and $k$ be integers with $4 \leq h \leq k-1$. Let $A = \{a_{1}, a_{2}, \ldots, a_{k}\}$ be a set of $k$ positive integers such that $a_{1} < a_{2} < \cdots < a_{k}$. Let  $A_{h+1}  = \{a_{1}, a_{2}, \ldots, a_{h+1}\}$ and $A^{\prime} = A \setminus \{a_{1}\}$. Suppose that
	\begin{equation}\label{Inverse conjecture eq-1 }
		|h^{\wedge}_{\pm}A| = 2hk-h^2 + 1,
	\end{equation}
	and one of the following conditions holds:
	\begin{enumerate}
		\item [\upshape(a)] $A$ is an arithmetic progression.
		\item [\upshape(b)]  $A_{h+1}$ is an arithmetic progression
		\item [\upshape(c)] $\left| h^{\wedge}_{\pm}A_{h+1}\right|   \geq  (h+1)^2 $ and $4 \leq h \leq k-3$.
		\item [\upshape(d)] $ h^{\wedge}_{\pm}A =  h^{\wedge}(-A^{\prime}) \cup  h^{\wedge}_{\pm}A_{h+1} \cup  h^{\wedge}A^{\prime}$ and $A^{\prime}$ is an arithmetic progression.
		\item [\upshape(e)] $\left| h^{\wedge}_{\pm}A_{h+1}\right| \geq  (h+1)^2$ and $A^{\prime}$ is an arithmetic progression.
	\end{enumerate}
	Then $A = a_1 \ast \{1,3,5, \ldots, 2k-1\}$.
\end{theorem}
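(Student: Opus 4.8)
The plan is to reduce each of the five cases to the situation of part~(a), where $A$ is a genuine arithmetic progression and the conclusion follows at once from Corollary~\ref{Coro-1}. Throughout I keep the notation $A'=\{a_2,\dots,a_k\}$, $A_{h+1}=\{a_1,\dots,a_{h+1}\}$ and recall the containment used in Lemma~\ref{Lemma-1}, namely $(-h^{\wedge}A')\cup h^{\wedge}_{\pm}A_{h+1}\cup h^{\wedge}A'\subseteq h^{\wedge}_{\pm}A$, whose only overlaps are the two points $\pm(a_2+\cdots+a_{h+1})$. The first step is to extract two numerical equalities. In cases (b), (c), (e) one has $|h^{\wedge}_{\pm}A_{h+1}|\ge (h+1)^2$ --- by hypothesis in (c) and (e), and by Theorem~\ref{thm 1} in (b) --- so Lemma~\ref{Lemma-1}, applied with $t=|h^{\wedge}_{\pm}A_{h+1}|-(h+1)^2$, combined with the hypothesis $|h^{\wedge}_{\pm}A|=2hk-h^2+1$, forces $|h^{\wedge}_{\pm}A_{h+1}|=(h+1)^2$. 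Substituting this back into the containment above and using the lower bound $|h^{\wedge}A'|\ge h(k-1)-h^2+1$ of Theorem~\ref{Nathanson Direct Inverse  Thm} shows that this bound too is met with equality, i.e.\ $|h^{\wedge}A'|=h(k-1)-h^2+1$. In case (d) both equalities fall out of the assumed exact decomposition by inclusion-exclusion, since $A'$ an arithmetic progression gives $|h^{\wedge}A'|=h(k-1)-h^2+1$ and hence $|h^{\wedge}_{\pm}A_{h+1}|=(h+1)^2$.

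The second step is to ensure that $A'$ is an arithmetic progression of some common difference $d'$. This is assumed outright in (d) and (e). In (c) the range restriction $4\le h\le k-3$ gives $|A'|=k-1\ge 5$ and $2\le h\le|A'|-2$, so the inverse part of Theorem~\ref{Nathanson Direct Inverse  Thm} applies to the equality $|h^{\wedge}A'|=h(k-1)-h^2+1$ and makes $A'$ an arithmetic progression. Case (b) is slightly different: there Theorem~\ref{thm 1} already forces the common difference of $A_{h+1}$ to be $2a_1$, so in particular $a_2=3a_1$ and $a_3-a_2=2a_1$; for $4\le h\le k-3$ the same use of Nathanson's inverse theorem makes $A'$ an arithmetic progression, necessarily of difference $a_3-a_2=2a_1$, so that $A$ is a full $k$-term arithmetic progression and Corollary~\ref{Coro-1} finishes. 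The two boundary values $h=k-1$ (where $A_{h+1}=A$ and Theorem~\ref{thm 1} applies directly) and $h=k-2$ (handled by the rigidity lemma below applied at the top end of $A$) are treated separately.

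This leaves the core configuration common to (c), (d), (e): $A'=\{a_2,\dots,a_k\}$ is an arithmetic progression of difference $d'$, and $|h^{\wedge}_{\pm}A_{h+1}|=(h+1)^2$, where $A_{h+1}=\{a_1\}\cup B$ with $B=\{a_2,\dots,a_{h+1}\}$ an $h$-term arithmetic progression of difference $d'$ and $0<a_1<a_2$. The crux is the rigidity statement that this forces $a_1=a_2-d'$. Granting it, $A=\{a_2-d',a_2,a_2+d',\dots,a_2+(k-2)d'\}$ is a $k$-term arithmetic progression, and the equality clause of Corollary~\ref{Coro-1} then forces $d'=2a_1$, whence $A=a_1\ast\{1,3,\dots,2k-1\}$. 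To prove $a_1=a_2-d'$ I condition on the coefficient of $a_1$ to write $h^{\wedge}_{\pm}A_{h+1}=h^{\wedge}_{\pm}B\cup(a_1+(h-1)^{\wedge}_{\pm}B)\cup(-a_1+(h-1)^{\wedge}_{\pm}B)$, where $|B|=h$ so $h^{\wedge}_{\pm}B=\{\pm a_2\pm\cdots\pm a_{h+1}\}$. Since $B$ is an arithmetic progression, both $h^{\wedge}_{\pm}B$ and $(h-1)^{\wedge}_{\pm}B$ are explicit symmetric progressions of step $d'$, and a direct count shows that the three translates fuse into exactly $(h+1)^2$ values only when $a_1$ continues $B$ downward by one step, any other placement of $a_1$ producing strictly more. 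In effect this is the converse of Theorem~\ref{thm 1}, but restricted to the much more rigid class of ``arithmetic progression plus one extra point'' sets, which is why it is accessible here although the unrestricted inverse problem for $h=k-1$ remains open.

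I expect this last rigidity lemma to be the main obstacle. Its delicacy is that the single hypothesis $|h^{\wedge}_{\pm}A_{h+1}|=(h+1)^2$ must do double duty: it has to bound how far $a_1$ can lie from $a_2$, and it has to exclude the ``gap'' placements $a_1=a_2-md'$ with $m\ge 2$. These gap placements already satisfy the congruence $a_1\equiv a_2\pmod{d'}$, which is itself forced as soon as one knows (from the equalities of the first step) that the containment of Lemma~\ref{Lemma-1} is an equality: a suitably chosen far-reaching sum such as $a_1+a_2+\cdots+a_{h-1}+a_k$ then lies in $h^{\wedge}_{\pm}A$ but can only be accommodated inside $h^{\wedge}A'$, which pins down $a_1$ modulo $d'$. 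The cleanest route to the remaining rigidity seems to be an induction on $h$ parallel to the proof of Theorem~\ref{thm 1}, tracking at each stage the block of new elements contributed by $\pm a_1$ and showing that a misplaced $a_1$ destroys one of the coincidences responsible for the count collapsing to $(h+1)^2$.
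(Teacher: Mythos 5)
Your bookkeeping steps (forcing $\left|h^{\wedge}_{\pm}A_{h+1}\right|=(h+1)^2$, $\left|h^{\wedge}A^{\prime}\right|=h(k-1)-h^2+1$, and the set equality $h^{\wedge}_{\pm}A=h^{\wedge}(-A^{\prime})\cup h^{\wedge}_{\pm}A_{h+1}\cup h^{\wedge}A^{\prime}$ from Lemma \ref{Lemma-1} and Theorem \ref{Nathanson Direct Inverse  Thm}) agree with the paper. The genuine gap is the ``rigidity lemma'' on which you make cases (c), (d), (e) and the $h=k-2$ boundary of (b) depend: the claim that $\left|h^{\wedge}_{\pm}(\{a_1\}\cup B)\right|=(h+1)^2$, with $B$ an $h$-term arithmetic progression of difference $d'$, forces $a_1=a_2-d'$. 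You never prove this; you only assert that ``a direct count shows'' the three translates $h^{\wedge}_{\pm}B$, $\pm a_1+(h-1)^{\wedge}_{\pm}B$ fuse only in that case, and you yourself flag it as the main obstacle. This is not a small omission: the statement is an inverse theorem for the restricted signed sumset at the extremal parameter $h=\left|A_{h+1}\right|-1$ (albeit restricted to ``AP plus one point'' sets), and for $h=4$ establishing the unrestricted version of exactly this kind of rigidity occupies the entire Section \ref{proof-conj1} of the paper. A sketch of an induction ``parallel to Theorem \ref{thm 1}'' does not discharge it.

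The paper avoids this lemma altogether, and the device is available to you: you already observe that the set equality lets you locate far-reaching sums. Instead of only pinning $a_1$ modulo $d'$ and then fighting the gap placements $a_1=a_2-md'$, take $x=a_1+a_3+\cdots+a_h+a_{h+2}\in h^{\wedge}_{\pm}A$ (this needs $k\ge h+2$, which holds in (c)--(e) and in the remaining subcase of (b)). Then
\[
\max\nolimits_{-}(h^{\wedge}_{\pm}A_{h+1})=a_1+a_3+\cdots+a_{h+1}<x<a_2+a_3+\cdots+a_h+a_{h+2}=\min\nolimits_{+}(h^{\wedge}A^{\prime}),
\]
and the only element of $h^{\wedge}(-A^{\prime})\cup h^{\wedge}_{\pm}A_{h+1}\cup h^{\wedge}A^{\prime}$ strictly between these two bounds is $\max(h^{\wedge}_{\pm}A_{h+1})=\min(h^{\wedge}A^{\prime})=a_2+\cdots+a_{h+1}$; hence $x$ equals it and $a_{h+2}-a_{h+1}=a_2-a_1$. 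Combined with $A^{\prime}$ being an arithmetic progression this makes $A$ a full arithmetic progression, and Corollary \ref{Coro-1} (whose equality clause forces $d=2a_1$) finishes. Your proposal as written does not reach the conclusion without the unproved rigidity lemma, so it is incomplete.
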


\begin{proof}
	We have $$(-h^{\wedge}A^{\prime}) \cup h^{\wedge}_{\pm}A_{h+1} \cup h^{\wedge}A^{\prime} \subseteq h^{\wedge}_{\pm}A.$$
	\begin{enumerate}
		\item [\upshape(a)] If $A$ is an arithmetic progression, then  Corollary \ref{Coro-1} implies that $A = a_1 \ast \{1,3,5, \ldots, 2k-1\}$. 
		\item [\upshape(b)] If $A_{h+1}$ is an arithmetic progression, then  Theorem \ref{thm 1} implies that $\left| h^{\wedge}_{\pm}A_{h+1}\right|   \geq  (h+1)^2$. Now, if $k = h+1$, then $|h^{\wedge}_{\pm}A_{h+1}|=|h^{\wedge}_{\pm}A| = (h+1)^2,$ and  using Theorem \ref{thm 1}, we have $A_{h+1} = a_{1} \ast \{1,3,5, \ldots, 2h+1\}$. Assume $k \geq h+2$. Note that $(-h^{\wedge}A^{\prime}) \cup h^{\wedge}_{\pm}A_{h+1} \cup h^{\wedge}A^{\prime} \subseteq h^{\wedge}_{\pm}A$ and there is exactly one element in each of  $h^{\wedge}_{\pm}A_{h} \cap h^{\wedge}A^{\prime}$ and $h^{\wedge}_{\pm}A_{h} \cap (- h^{\wedge}A^{\prime})$. Therefore, by Theorem \ref{Nathanson Direct Inverse  Thm}, we have
		\begin{align*}
			2hk-h^{2} + 1 &= \left| h^{\wedge}_{\pm}A\right| \\
			& \geq \left| h^{\wedge}_{\pm}A_{h+1}\right| + \left|  h^{\wedge}A^{\prime}\right|  + \left|  h^{\wedge}(-A^{\prime})\right|- 2 \\
			& \geq (h+1)^2 + 2h(k-1)-2h^{2}+2-2= 2hk-h^{2} + 1
		\end{align*}
		This gives
		\begin{equation}\label{Eq-1}
			\left| h^{\wedge}_{\pm}A_{h+1}\right|   = (h+1)^2,
		\end{equation}
		which, by Theorem \ref{thm 1}, gives $A_{h+1} =a_{1}\ast \{1,3,5,\ldots, 2h+1\}$. Also, we have 
		\begin{equation}\label{Eq-2}
			\left|  h^{\wedge}A^{\prime}\right|  + \left|  h^{\wedge}(-A^{\prime})\right| = 2h(k-1)-2h^{2}+2
		\end{equation}	
		and
		\begin{equation}\label{Eq-3}
			h^{\wedge}_{\pm}A =  h^{\wedge}(-A^{\prime}) \cup  h^{\wedge}_{\pm}A_{h+1} \cup  h^{\wedge}A^{\prime}.
		\end{equation}
		Let $x =  a_{1}+a_{3} + \cdots + a_{h} + a_{h+2} \in h^{\wedge}_{\pm}A$. Note that
		\begin{multline*}
			\max\nolimits_{-}( h^{\wedge}_{\pm}A_{h+1}) = a_{1} + a_{3} + \cdots + a_{h} + a_{h+1} < x  \\< a_{2} + a_{3} + \cdots + a_{h} + a_{h+2} =  \min\nolimits_{+}(h^{\wedge}A^{\prime}) 
		\end{multline*}
		and
		\[\max\nolimits_{-}( h^{\wedge}_{\pm}A_{h+1}) < \max( h^{\wedge}_{\pm}A_{h+1}) = \min(h^{\wedge}A^{\prime}) <  \min\nolimits_{+}(h^{\wedge}A^{\prime}).\]
		From (\ref{Eq-1}), (\ref{Eq-2}) and (\ref{Eq-3}), it follows that
		\[x = a_{1}+a_{3} + \cdots + a_{h} + a_{h+2} = a_{2}+a_{3} + \cdots + a_{h} + a_{h+1} = \max( h^{\wedge}_{\pm}A_{h+1}). \]
		This gives $a_{h+2}- a_{h+1} = a_{2} - a_{1}$. Therefore, if $k=h+2$, then $A = a_{1} \ast \{1,3,5,\ldots, 2h+3\}$.  If $4 \leq h \leq k-3$, then from (\ref{Eq-2}) and Theorem \ref{Nathanson Direct Inverse  Thm}, $A^{\prime}$ is an arithmetic progression. Hence $A = a_{1} \ast \{1,3,5,\ldots,2k-1\}$.
		
		\item [\upshape(c)] If $\left| h^{\wedge}_{\pm}A_{h+1}\right|   \geq  (h+1)^2 $ and $4 \leq h \leq k-3$, then by similar argument as in $(b)$, we get $a_{h+2}- a_{h+1} = a_{2} - a_{1}$ and  $	\left|  h^{\wedge}A^{\prime}\right|  = h(k-1)-h^{2}+1$. Therefore, Theorem \ref{Nathanson Direct Inverse  Thm} implies that $A^{\prime}$ is an arithemetic progression and so $A$ is an arithmetic progression. Since	$|h^{\wedge}_{\pm}A| = 2hk-h^2 + 1$, then by Corollary \ref{Coro-1}, we get $A = a_{1} \ast \{1,3,5,\ldots,2k-1\}$.
		
		\item [\upshape(d)] If $ h^{\wedge}_{\pm}A =  h^{\wedge}(-A^{\prime}) \cup  h^{\wedge}_{\pm}A_{h+1} \cup  h^{\wedge}A^{\prime}$ and $A^{\prime}$ is an arithmetic progression, then  $	\left|  h^{\wedge}A^{\prime}\right|  = h(k-1)-h^{2}+1$ and 
		\begin{align*}
			2hk-h^{2} + 1 &= \left| h^{\wedge}_{\pm}A\right| \\
			& = \left| h^{\wedge}_{\pm}A_{h+1}\right| + \left|  h^{\wedge}A^{\prime}\right|  + \left|  h^{\wedge}(-A^{\prime})\right|- 2 \\
			& = \left| h^{\wedge}_{\pm}A_{h+1}\right| + 2h(k-1)-2h^{2}+2-2.
		\end{align*}
		Therefore $\left| h^{\wedge}_{\pm}A_{h+1}\right| = (h+1)^2 $. By similar argument as in $(b)$, we get  $a_{h+2}- a_{h+1} = a_{2} - a_{1}$, and so $A$ is an arithmetic progression. Since	$|h^{\wedge}_{\pm}A| = 2hk-h^2 + 1$, then by Corollary \ref{Coro-1}, we get $A = a_{1} \ast \{1,3,5,\ldots,2k-1\}$. 
		
		\item [\upshape(e)] This case is similar to case $(d)$.
	\end{enumerate}
\end{proof}
A similar argument proves the following theorem.	
\begin{theorem}\label{Partial inverse thm I}
	Let $h$ and $k$ be integers with $4 \leq h \leq k-1$. Let $A = \{a_{0}, a_{1}, a_{2}, \ldots, a_{k-1}\}$ be a set of $k$ nonnegative integers such that $0=a_{0} < a_{1} < a_{2} < \cdots < a_{k-1}$. Let  $A_{h}  = \{a_{0}, a_{1}, a_{2}, \ldots, a_{h}\}$ and $A^{\prime} = A \setminus \{a_{0}\}$. If
	\begin{equation}\label{Inverse conjecture eq-1 }
		|h^{\wedge}_{\pm}A| = 2hk-h(h+1) + 1
	\end{equation}
	and any one of the following conditions,
	\begin{enumerate}
		\item [\upshape(a)] $A$ is an arithmetic progression
		\item [\upshape(b)]  $A_{h}$ is an arithmetic progression
		\item [\upshape(c)] $\left| h^{\wedge}_{\pm}A_{h}\right|   \geq  h(h+1) + 1 $ and $4 \leq h \leq k-3$
		\item [\upshape(d)] $ h^{\wedge}_{\pm}A =  h^{\wedge}(-A^{\prime}) \cup  h^{\wedge}_{\pm}A_{h} \cup  h^{\wedge}A^{\prime}$ and $A^{\prime}$ is an arithmetic progression
		\item [\upshape(e)] $\left| h^{\wedge}_{\pm}A_{h}\right|   \geq  h(h+1)+1 $   and $A^{\prime}$ is an arithmetic progression
	\end{enumerate}
	holds, then $A = a_1 \ast [0,k-1]$.
\end{theorem}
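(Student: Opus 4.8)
The plan is to mirror the proof of Theorem \ref{Partial inverse thm} step by step, adapting every computation to the presence of the element $a_0 = 0$. Write $P = a_1 + a_2 + \cdots + a_h$. Exactly as in Lemma \ref{Lemma-2} one has the inclusion $(-h^{\wedge}A') \cup h^{\wedge}_{\pm}A_h \cup h^{\wedge}A' \subseteq h^{\wedge}_{\pm}A$, and since $a_0 = 0$ contributes nothing to any signed sum, the two overlaps are the single points $h^{\wedge}_{\pm}A_h \cap h^{\wedge}A' = \{P\}$ and $h^{\wedge}_{\pm}A_h \cap (-h^{\wedge}A') = \{-P\}$; in particular $\max(h^{\wedge}_{\pm}A_h) = \min(h^{\wedge}A') = P$. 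Inclusion–exclusion then gives $|h^{\wedge}_{\pm}A| \geq |h^{\wedge}_{\pm}A_h| + 2|h^{\wedge}A'| - 2$, with $|h^{\wedge}A'| \geq h(k-1) - h^2 + 1$ from Theorem \ref{Nathanson Direct Inverse  Thm} applied to the $(k-1)$-element set $A'$.

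Case (a) is immediate, since an arithmetic progression of nonnegative integers whose least element is $a_0 = 0$ must be $a_1 \ast [0,k-1]$. For case (b) the hypothesis forces $A_h = a_1 \ast [0,h]$, and a direct computation (the $(h+1)$-term instance of the arithmetic-progression theorem proved just above for sets containing $0$) gives $|h^{\wedge}_{\pm}A_h| = h(h+1) + 1$; if $k = h+1$ we are done, so I assume $k \geq h+2$. Then the displayed inequality together with $|h^{\wedge}_{\pm}A| = 2hk - h(h+1) + 1$ is forced to be an equality throughout, yielding $|h^{\wedge}A'| = h(k-1) - h^2 + 1$ and the tight decomposition $h^{\wedge}_{\pm}A = h^{\wedge}(-A') \cup h^{\wedge}_{\pm}A_h \cup h^{\wedge}A'$. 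The same tight decomposition is reached in case (c) through Lemma \ref{Lemma-2}, and in cases (d),(e) by solving the cardinality identity (using that an arithmetic progression $A'$ attains the bound of Theorem \ref{Nathanson Direct Inverse  Thm}, which forces $|h^{\wedge}_{\pm}A_h| = h(h+1)+1$). Thus all of (b)–(e) reduce to the same situation.

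The heart of the argument is then a window argument on this decomposition. Because the free slot $a_0 = 0$ can be spent at no cost, the second-largest element of $h^{\wedge}_{\pm}A_h$ is $\max\nolimits_{-}(h^{\wedge}_{\pm}A_h) = a_2 + a_3 + \cdots + a_h$ (drop $a_1$ and assign a sign to $a_0$), whereas $\min\nolimits_{+}(h^{\wedge}A') = a_1 + \cdots + a_{h-1} + a_{h+1}$. I would test the element
\[
x = a_0 + a_2 + a_3 + \cdots + a_{h-1} + a_{h+1} = a_2 + a_3 + \cdots + a_{h-1} + a_{h+1} \in h^{\wedge}_{\pm}A,
\]
which uses exactly $h$ nonzero coefficients and satisfies $x - \max\nolimits_{-}(h^{\wedge}_{\pm}A_h) = a_{h+1} - a_h > 0$ and $\min\nolimits_{+}(h^{\wedge}A') - x = a_1 > 0$. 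Since $h^{\wedge}(-A')$ lies entirely below $0$, the only element of the decomposition strictly between $\max\nolimits_{-}(h^{\wedge}_{\pm}A_h)$ and $\min\nolimits_{+}(h^{\wedge}A')$ is the overlap point $P$; hence $x = P$, which simplifies to $a_{h+1} = a_1 + a_h$, i.e. $a_{h+1} - a_h = a_1 - a_0$.

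Finally I would upgrade this single relation to the full structure. In case (b), $A_h = a_1 \ast [0,h]$ is already an arithmetic progression of common difference $a_1$, so $a_{h+1} - a_h = a_1$ extends it; if $k = h+2$ this finishes, while if $h \leq k-3$ the identity $|h^{\wedge}A'| = h(k-1)-h^2+1$ and Theorem \ref{Nathanson Direct Inverse  Thm} (applicable since $|A'| = k-1 \geq 5$ and $2 \leq h \leq |A'|-2$) make $A'$ an arithmetic progression, necessarily of common difference $a_1$, whence $A = a_1 \ast [0,k-1]$. In cases (c)–(e), $A'$ is an arithmetic progression (assumed, or deduced from the tight cardinality via Theorem \ref{Nathanson Direct Inverse  Thm}), and its common difference equals the consecutive gap $a_{h+1} - a_h = a_1$; since $a_0 = 0$ is exactly the term preceding $a_1$, we again get $A = a_1 \ast [0,k-1]$. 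The main obstacle is the exact second-element bookkeeping for $h^{\wedge}_{\pm}A_h$ together with the choice of $x$: the zero element shifts $\max\nolimits_{-}$ from the ``flip a sign'' value to the larger ``drop $a_1$, use $a_0$'' value, and it is precisely this shift that makes a single test element land in the one-point window. A secondary point to verify is the boundary $k = h+1$ in cases (d),(e), where $A' = \{a_1,\dots,a_h\}$ has only $h$ elements, $A_h = A$, and the extension element $a_{h+1}$ is absent; there one argues directly that the minimal value $|h^{\wedge}_{\pm}A_h| = h(h+1)+1$ together with $A'$ being an arithmetic progression forces its common difference to equal $a_1$, reducing it to case (b).
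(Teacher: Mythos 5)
The paper gives no separate proof of this theorem --- it only says ``a similar argument'' referring to Theorem \ref{Partial inverse thm} --- and your proposal is precisely that argument, correctly adapted to the presence of $a_0=0$: the overlap points of the decomposition become $\pm(a_1+\cdots+a_h)$, the second-largest element of $h^{\wedge}_{\pm}A_h$ becomes $a_2+\cdots+a_h$ because the zero slot can be spent at no cost, and your test element $x$ lands in the one-point window to give $a_{h+1}-a_h=a_1-a_0$, after which Theorem \ref{Nathanson Direct Inverse  Thm} finishes exactly as in the paper. The one loose end you flag --- the boundary $k=h+1$ in cases (d) and (e), where $a_{h+1}$ does not exist and one must show directly that $|h^{\wedge}_{\pm}A_h|=h(h+1)+1$ with $A'$ an arithmetic progression forces common difference $a_1$ --- is equally unaddressed in the paper's template proof of Theorem \ref{Partial inverse thm}, so your write-up is at least as complete as the paper's own.
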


\begin{theorem}\label{direct theorem 2}
	Let $h \geq 3$ be an integer. Let $A=\{a_{1}, a_{2},  \ldots,  a_{h+1} \}$ be a set of positive integers, where $a_1 < a_2 < \cdots < a_{h+1}$ and $a_{i} \geq a_{i-1} + a_{i-2}$ for $i=4, \ldots , h+1$. Then
	\begin{equation}\label{theorem bound 2}
		\left|h^{\wedge}_{\pm}A\right| \geq (h+1)^2 + 1.
	\end{equation}
\end{theorem}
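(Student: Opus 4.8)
The plan is to argue by induction on $h$, splitting off the largest element $a_{h+1}$ and counting the contributions coming from its two possible nonzero coefficients. The base case $h=3$ is treated separately, while the inductive step rests on a single size inequality forced by the hypothesis $a_i\ge a_{i-1}+a_{i-2}$.

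For the inductive step I would set $A_h=\{a_1,\dots,a_h\}$ and $B=(h-1)^{\wedge}_{\pm}A_h$. The growth condition is inherited by $A_h$, so the induction hypothesis (the case $h-1$) gives $|B|\ge h^2+1$. Partitioning an element of $h^{\wedge}_{\pm}A$ according to whether $\lambda_{h+1}$ equals $1$, $0$, or $-1$ shows that $(a_{h+1}+B)\cup(-a_{h+1}+B)\subseteq h^{\wedge}_{\pm}A$. The translate $a_{h+1}+B$ already contributes $|B|\ge h^2+1$ elements, and each of them is at least $S_{h+1}:=a_{h+1}-(a_2+\cdots+a_h)=\min(a_{h+1}+B)$. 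It then remains to extract the $2h+1=(h+1)^2+1-(h^2+1)$ further elements from $-a_{h+1}+B$ lying strictly below $S_{h+1}$.

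The key step is the inequality $\theta:=2a_{h+1}-(a_2+\cdots+a_h)>0$. Rewriting the hypothesis as $a_{i-1}\le a_{i+1}-a_i$ for $3\le i\le h$ and summing telescopes to $a_2+\cdots+a_{h-1}\le a_{h+1}-a_3$, whence $a_2+\cdots+a_h<a_{h+1}+a_h<2a_{h+1}$, i.e. $\theta>0$. Since $B$ is symmetric it has $\lceil|B|/2\rceil\ge\lceil(h^2+1)/2\rceil$ non-positive elements, and $\lceil(h^2+1)/2\rceil\ge 2h+1$ precisely when $h\ge 4$ (it is an equality at $h=4$). For every $b\in B$ with $b\le 0$ we have $b<\theta$, hence $-a_{h+1}+b<S_{h+1}$; these translates are pairwise distinct and, lying below $\min(a_{h+1}+B)$, are disjoint from $a_{h+1}+B$. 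This yields at least $2h+1$ additional elements, so $|h^{\wedge}_{\pm}A|\ge(h^2+1)+(2h+1)=(h+1)^2+1$, completing the inductive step.

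The main obstacle is the base case $h=3$, where the counting above is too weak: $A_3$ carries no growth restriction, so $|B|$ can be as small as $8$ (e.g. $A_3=\{1,3,5\}$) and only $\lceil 8/2\rceil=4<7$ non-positive translates are guaranteed. Moreover the explicit increasing chain used for arithmetic progressions in Theorem~\ref{thm 1} no longer lists the sums in increasing order under the present hypothesis; for instance $-a_3-a_2+a_1<-a_4+a_2-a_1$ can fail when $a_4\ge a_2+a_3$. I would therefore establish $|3^{\wedge}_{\pm}A|\ge 17$ directly, constructing an increasing chain of $17$ elements in the order forced by $a_4\ge a_2+a_3$ and splitting into the cases $a_4=a_2+a_3$ and $a_4>a_2+a_3$; getting this ordering right is the most delicate part of the proof.
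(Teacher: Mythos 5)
Your inductive step is correct and takes a genuinely different route from the paper's. The paper makes the same reduction (split off $a_{h+1}$, apply the induction hypothesis to $A_h=\{a_1,\dots,a_h\}$, then find $2h+1$ elements of $h^{\wedge}_{\pm}A$ missed by $(h-1)^{\wedge}_{\pm}A_h+a_{h+1}$), but it produces those extra elements by writing down an explicit decreasing chain of $2h+2$ signed sums ($S_i$, $T_i$, $X_1$, $X$, $Y_1$, $Y$, $Z_1$, $Z$) lying at or below $\min\left((h-1)^{\wedge}_{\pm}A_h+a_{h+1}\right)$ and verifying the resulting string of inequalities, the hypothesis $a_{h+1}\geq a_h+a_{h-1}$ being used to anchor the top of the chain. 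You replace this bookkeeping with a counting argument: $(h-1)^{\wedge}_{\pm}A_h$ is symmetric, hence has at least $\lceil (h^2+1)/2\rceil$ non-positive elements, and each such $b$ gives $-a_{h+1}+b<a_{h+1}-(a_2+\cdots+a_h)$ because $2a_{h+1}>a_2+\cdots+a_h$, which you correctly extract by telescoping the growth hypothesis. Since $\lceil (h^2+1)/2\rceil\geq 2h+1$ exactly when $h\geq 4$, this closes the inductive step with a single inequality in place of a long ordered list; it is cleaner and easier to check, and costs nothing in the range where it is needed.

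The one real gap is the base case $h=3$. You correctly diagnose that your counting argument fails there and that an explicit chain of $17$ elements of $3^{\wedge}_{\pm}A$ (under $a_4\geq a_2+a_3$, with attention to possible coincidences such as $a_4=a_2+a_3$) is required, but you do not carry out that construction, and it is not a formality — the ordering of the sixteen-odd candidate sums genuinely depends on the case analysis you mention. The paper does not reprove it either; it invokes the proof of Theorem 2.5 of Bhanja, Komatsu and Pandey \cite{BhanjaKomPandey2021}, where precisely this case is worked out. Your argument is complete once you either supply that chain or cite the same source.
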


\begin{proof}
	We shall use induction on  $h$ to prove the lower bound.  The base case $h=3$ is proved by Bhanja et al. (see the proof of Theorem 2.5 in \cite{BhanjaKomPandey2021}). Let $h\geq 4$ and assume the result holds for $h-1$. Let  $A_{h} =\{a_{1}, a_{2}, \ldots, a_{h}\}$. Then the induction hypothesis implies that
	\begin{equation*}
		\left|(h-1)^{\wedge}_{\pm}A_{h}\right| \geq h^2 + 1.
	\end{equation*}
	Since $ (h-1)^{\wedge}_{\pm}A_{h} + a_{h+1} \subseteq h^{\wedge}_{\pm}A$, it is sufficient to construct $2h+1$ more element to complete the proof. Since $ - a_{h-1} - a_{h} + a_{h+1} \geq a_{h-1} + a_{h} - a_{h+1}$, we have
	\begin{multline*}
		\min ((h-1)^{\wedge}_{\pm}A_{h} + a_{h+1})=-a_{2} - a_{3} - \ldots - a_{h-1} - a_{h} + a_{h+1}\\ \geq - a_{2} \ldots - a_{h-2} + a_{h-1} + a_{h} - a_{h+1}.
	\end{multline*}
	Consider the following elements of $h^{\wedge}_{\pm}A$:
	\[S_{i} = -\left(\sum_{j=2, j \neq i}^{h-1}a_{j}\right) + a_{i}  + a_{h} - a_{h+1} \ \text{for} \ i= 2, 3, \ldots, h-1; \]
	\[T_{i} = -\left(\sum_{j=2, j\neq i}^{h+1}a_{j}\right) + a_{i} \ \text{for} \ i= 2, \ldots, h-1;\]
	\[X_{1} = a_{1} - \left(\sum_{i=3, i\neq h}^{h+1}a_{i}\right) + a_{h},  \ X = a_{1} - \left(\sum_{i=3}^{h+1}a_{i}\right);\]
	\[Y_{1} = - a_{1} - \left(\sum_{i=3, i\neq h}^{h+1}a_{i}\right) + a_{h}, \ Y = - a_{1} - \left(\sum_{i=3}^{h+1}a_{i}\right);\]
	\[Z_{1} = - \left(\sum_{i=2, i\neq h}^{h+1}a_{i}\right) + a_{h}, \  \text{and} \ Z = - \left(\sum_{i=2}^{h+1}a_{i}\right).\]
	It is easy to see that
	\begin{multline*}
		\min((h-1)^{\wedge}_{\pm}A_{h} + a_{h+1}) \geq S_{h-1} > S_{h-2} > \cdots > S_{2} > X_{1} > Y_{1} > Z_{1} \\
		> T_{h-1} > T_{h-2} > \cdots > T_{2} > X > Y > Z = \min(h^{\wedge}_{\pm}A).
	\end{multline*}
	Hence
	\begin{equation*}
		|h^{\wedge}_{\pm}A| \geq (h+1)^2 + 1.
	\end{equation*}
\end{proof}

\begin{theorem}\label{direct theorem 3}
	Let $h \geq 3$ be an integer. Let $A = \{a_{1}, a_{2}, \ldots,  a_{h+1}\}$ be a set of $h+1$ positive integers, where $a_1 < a_2 < \cdots < a_{h+1}$ with $a_{3} - a_{2} < 2a_{1}$ and  $a_{i} - a_{i-1} > \dfrac{a_{2} - a_{1}}{2}$ for $i = 4, \ldots , h+1$. Then
	\begin{equation*}
		|h^{\wedge}_{\pm}A| \geq (h+1)^2 + 1.
	\end{equation*}		
\end{theorem}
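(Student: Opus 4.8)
The plan is to argue by induction on $h$, peeling off the largest element $a_{h+1}$ exactly as in the proof of Theorem \ref{direct theorem 2}, but with the auxiliary elements and their ordering tailored to the two present hypotheses rather than to a super-increasing condition. The arithmetic that drives the whole scheme is that the induction hypothesis will supply $h^2+1$ elements and I will need to adjoin exactly $2h+1$ more, since $(h+1)^2+1=(h^2+1)+(2h+1)$.

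First I would settle the base case $h=3$, where $A=\{a_1,a_2,a_3,a_4\}$ with $a_3-a_2<2a_1$ and $a_4-a_3>\frac{a_2-a_1}{2}$, by displaying an explicit increasing chain of $17$ distinct signed $3$-sums. The two hypotheses enter precisely through two consecutive strict comparisons in the chain. Deleting $a_3$ versus deleting $a_2$ gives
\[
(a_1+a_2+a_4)-(-a_1+a_3+a_4)=2a_1-(a_3-a_2),
\]
which is positive exactly by $a_3-a_2<2a_1$; this is the comparison that degenerates in the extremal arithmetic progression of common difference $2a_1$ (where one obtains only $16$). Deleting $a_2$ versus deleting $a_1$ gives
\[
(a_1-a_3+a_4)-(a_2+a_3-a_4)=2(a_4-a_3)-(a_2-a_1),
\]
positive exactly by $a_4-a_3>\frac{a_2-a_1}{2}$. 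Both hypotheses must hold strictly for all $17$ listed values to be distinct.

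For the inductive step ($h\ge 4$) I set $A_h=\{a_1,\dots,a_h\}=A\setminus\{a_{h+1}\}$. The hypotheses are inherited: $a_3-a_2<2a_1$ is unchanged, and $a_i-a_{i-1}>\frac{a_2-a_1}{2}$ for $i=4,\dots,h$ is a sub-family of the hypotheses on $A$, so the induction hypothesis gives $|(h-1)^{\wedge}_{\pm}A_h|\ge h^2+1$. Since $(h-1)^{\wedge}_{\pm}A_h+a_{h+1}\subseteq h^{\wedge}_{\pm}A$ and translation preserves cardinality, this produces $h^2+1$ elements of $h^{\wedge}_{\pm}A$, all carrying $\lambda_{h+1}=+1$. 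It then remains to exhibit $2h+1$ further elements, which I would take among those with $\lambda_{h+1}\in\{0,-1\}$, arranged as an explicit chain whose successive strict inequalities again reduce to the doubled forms $2a_1>a_3-a_2$ and $2(a_i-a_{i-1})>a_2-a_1$ together with the ordering $a_1<\cdots<a_{h+1}$.

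The main obstacle is the verification in the inductive step, and it is genuinely harder than in Theorem \ref{direct theorem 2}. There the super-increasing hypothesis forces the three blocks $\{\lambda_{h+1}=1\}$, $\{\lambda_{h+1}=0\}$, $\{\lambda_{h+1}=-1\}$ to occupy disjoint ranges, so the newly constructed elements automatically lie below $\min\bigl((h-1)^{\wedge}_{\pm}A_h+a_{h+1}\bigr)$ and distinctness is immediate. Under the present much weaker hypotheses these blocks overlap in value: indeed the regime $a_{h+1}\le a_2+\cdots+a_h$ is exactly where $(h-1)^{\wedge}_{\pm}A_h+a_{h+1}$ and its reflection can meet, and one cannot separate the new elements from the translated block by a single threshold. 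The real work is therefore to choose the $2h+1$ auxiliary elements so that each required non-collision with the translated block, and each strict inequality inside the chain, is implied by one of the two hypotheses (or their doubled forms), and to dispatch the finitely many boundary coincidences by hand. I expect this bookkeeping---guaranteeing pairwise distinctness of the constructed elements and their disjointness from $(h-1)^{\wedge}_{\pm}A_h+a_{h+1}$ throughout the overlapping regime---to be the crux of the argument.
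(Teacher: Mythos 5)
Your skeleton coincides with the paper's: induct on $h$, use $(h-1)^{\wedge}_{\pm}A_h + a_{h+1} \subseteq h^{\wedge}_{\pm}A$ to import $h^2+1$ elements, and adjoin $2h+1$ more; you also correctly locate where the two hypotheses must enter, namely in the doubled forms $2a_1 > a_3-a_2$ and $2(a_i-a_{i-1}) > a_2-a_1$. But the proposal stops exactly where the proof begins: you never exhibit the $2h+1$ auxiliary elements, and you explicitly defer what you yourself call the crux (pairwise distinctness and disjointness from the translated block) as anticipated bookkeeping. That construction is the entire content of the inductive step, so as written there is a genuine gap.

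Moreover, your diagnosis of why this step should be hard is mistaken, and it would steer you toward an unnecessarily complicated argument. You claim that under these weak hypotheses one cannot separate the new elements from the translated block by a single threshold and must dispatch collisions by hand. In fact the paper does precisely the single-threshold separation: it takes $S_1 = a_1 - \sum_{j=3}^{h+1}a_j$, $S_i = a_i - \sum_{j=2,\, j\neq i}^{h+1}a_j$ for $i\in[2,h+1]$, $T_i = -a_1 + a_i - \sum_{j=3,\, j\neq i}^{h+1}a_j$ for $i\in[3,h]$, and $U_i = -\sum_{j=1,\, j\neq i}^{h+1}a_j$ for $i\in[1,3]$, all carrying $\lambda_{h+1}=-1$, and verifies the single strictly decreasing chain $S_{h+1} > T_h > S_h > T_{h-1} > \cdots > T_3 > S_3 > S_2 > S_1 > U_3 > U_2 > U_1$, where $S_{h+1} = \min\bigl((h-1)^{\wedge}_{\pm}A_h + a_{h+1}\bigr)$. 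Every consecutive difference in this chain is one of $a_2-a_1$, $a_3-a_2$, $2(a_3-a_2)$, $2a_1-(a_3-a_2)$, or $2(a_i-a_{i-1})-(a_2-a_1)$, each positive by the ordering of $A$ or by one of the two hypotheses; in particular the topmost comparison $S_{h+1}>T_h$ is exactly $2(a_{h+1}-a_h)>a_2-a_1$. That the full blocks $\{\lambda_{h+1}=1\}$ and $\{\lambda_{h+1}=-1\}$ may overlap in value is irrelevant: only the $2h+1$ chosen representatives need to lie below $\min\bigl((h-1)^{\wedge}_{\pm}A_h+a_{h+1}\bigr)$, and the hypotheses are calibrated to make exactly that happen, with no collision analysis at all. (Your two base-case comparisons for $h=3$ are the right ones, though the paper simply quotes that case from Bhanja et al.)
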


\begin{proof}
	We shall use induction on $h$ to prove the lower bound. The base case $h = 3$ is proved by Bhanja et al. (see the proof of Theorem 2.5 in \cite{BhanjaKomPandey2021}). Let $h \geq 4$ and assume the result holds for $h-1$. Let $A_{h} =\{a_{1}, a_{2}, \ldots, a_{h}\} \subseteq A$. Then the induction hypothesis implies that
	\begin{equation*}
		\left|(h-1)^{\wedge}_{\pm}A_{h} \right| \geq h^2 + 1.
	\end{equation*}
	Since $(h-1)^{\wedge}_{\pm}A_{h} + a_{h+1} \subseteq h^{\wedge}_{\pm}A$, it is sufficient to construct $2h+1$ more element to complete the proof. Let
	\[S_{1} = a_{1} - \left(\sum_{j = 3}^{h+1}a_{j}\right).\]
	For each $i \in [2, h+1]$, let
	\[S_{i} =   a_{i} -\left( \sum_{j = 2, j \neq i}^{h+1}a_{i} \right).\]
	For $i \in [3, h]$, let
	\[T_{i} = -a_{1}  + a_{i}  - \left(\sum_{j = 3, j \neq i}^{h+1} a_{i}\right).\]
	For each $i \in [1, 3]$, let
	\[U_{i} = -(\sum_{j = 1, j\neq i}^{h+1}a_{j}).\]
	It is easy to see that
	\begin{multline*}
		\min ((h-1)^{\wedge}_{\pm}A + a_{h+1} ) = S_{h+1} > T_{h} > S_{h} > T_{h-1} > S_{h-1} > \cdots > T_{3}\\
		> S_{3} > S_{2} > S_{1} > U_{3} > U_{2} > U_{1} = \min(h^{\wedge}_{\pm}A).
	\end{multline*}
	Hence
	\begin{equation*}
		|h^{\wedge}_{\pm}A| \geq (h+1)^2 + 1.
	\end{equation*}
\end{proof}
\section{Proof of Conjecture \ref{Conjecture 1} and Conjecture \ref{Conjecture 2} for $h = 4$}\label{proof-conj1}
In this section, we prove the following theorem which is a special case of Conjecture \ref{Conjecture 1} (the case $h = 4$).
\begin{theorem}\label{main theorem I}
	Let $A$ be a set of $k \geq 5$ positive integers. Then $$\left|4_{\pm}^{\wedge}A \right| \geq 8k-15.$$
	Furthermore, if  $\left|4_{\pm}^{\wedge}A \right| = 8k-15$, then $A = \min(A) \ast \{1,3,\ldots, 2k-1\}$.
\end{theorem}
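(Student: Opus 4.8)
The plan is to reduce the entire statement to a base case on the five smallest elements of $A$ and then bootstrap. Write $A = \{a_1 < a_2 < \cdots < a_k\}$ and put $A_5 = \{a_1, a_2, a_3, a_4, a_5\}$. The single inequality I would establish first is
\begin{equation*}
	\left|4^{\wedge}_{\pm}A_5\right| \geq 25,
\end{equation*}
valid for every choice of five positive integers $a_1 < \cdots < a_5$; note $25 = (4+1)^2$. Granting this, the direct bound $\left|4^{\wedge}_{\pm}A\right| \geq 8k - 15$ is immediate: for $k = 5$ it is exactly the base case (as $8\cdot 5 - 15 = 25$), while for $k \geq 6$ it follows from Lemma \ref{Lemma-1} applied with $h = 4$ and $t = 0$, since there $2hk - h^2 + 1 = 8k - 15$.

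For the base inequality I would argue as in Theorems \ref{thm 1}, \ref{direct theorem 2} and \ref{direct theorem 3}, by producing an explicit strictly increasing list of $25$ elements of $4^{\wedge}_{\pm}A_5$. Splitting according to the coefficient $\lambda_5 \in \{-1,0,1\}$ of $a_5$ gives
\begin{equation*}
	4^{\wedge}_{\pm}A_5 = \left(a_5 + 3^{\wedge}_{\pm}A_4\right) \cup \left(-a_5 + 3^{\wedge}_{\pm}A_4\right) \cup \left\{\pm a_1 \pm a_2 \pm a_3 \pm a_4\right\},
\end{equation*}
where $A_4 = \{a_1, a_2, a_3, a_4\}$. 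The $h = 3$ base bound $\left|3^{\wedge}_{\pm}A_4\right| \geq 16$ of Bhanja et al.\ supplies a central block, and the outer elements come from the two translated blocks, their mutual order being dictated by $0 < a_1 < \cdots < a_5$. Since the relative order of the competing signed sums depends on the gaps $a_{i+1} - a_i$, I expect to separate a few cases (for instance, according to whether $a_5$ exceeds $a_2 + a_3 + a_4$, and according to the near-arithmetic regime) so that a genuine increasing chain of $25$ elements is exhibited in every configuration. I regard this as the technical heart of the argument.

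The inverse statement I would handle in three ranges of $k$. For $k \geq 7$, the hypothesis $\left|4^{\wedge}_{\pm}A\right| = 8k - 15 = 2\cdot 4\cdot k - 4^2 + 1$, the base inequality $\left|4^{\wedge}_{\pm}A_5\right| \geq (4+1)^2$, and $4 \leq 4 \leq k - 3$ place us exactly in case (c) of Theorem \ref{Partial inverse thm}, which already yields $A = a_1 \ast \{1, 3, \ldots, 2k-1\}$, with no further input needed. The remaining cases rest on the \emph{inverse} base case
\begin{equation*}
	\left|4^{\wedge}_{\pm}A_5\right| = 25 \ \Rightarrow \ A_5 = a_1 \ast \{1,3,5,7,9\}.
\end{equation*}
For $k = 5$ this is the whole claim. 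For $k = 6$, writing $A' = A \setminus \{a_1\}$ one has $\left|4^{\wedge}A'\right| \geq 5$ by Theorem \ref{Nathanson Direct Inverse  Thm}, so the chain $\left|4^{\wedge}_{\pm}A\right| \geq \left|4^{\wedge}_{\pm}A_5\right| + 2\left|4^{\wedge}A'\right| - 2$ from the proof of Lemma \ref{Lemma-1} forces $\left|4^{\wedge}_{\pm}A_5\right| = 25$ whenever $\left|4^{\wedge}_{\pm}A\right| = 33$; the inverse base case then makes $A_5$ an arithmetic progression, and case (b) of Theorem \ref{Partial inverse thm} (with $k = h+2$) finishes.

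Finally, for the inverse base case I would refine the chain of the direct base case: I would track which inequalities must become equalities for the list of $25$ elements not to lengthen, and show that these collapse conditions force $a_{i+1} - a_i = 2a_1$ for every $i$, i.e.\ $A_5 = a_1 \ast \{1,3,5,7,9\}$. Theorems \ref{direct theorem 2} and \ref{direct theorem 3} already dispose of two families of non-arithmetic sets by exhibiting a $26$th element, and Theorem \ref{thm 1} pins down the common difference among arithmetic progressions. The main obstacle is to cover the five-element sets lying outside all of these hypotheses, which requires a complete case analysis of the gap pattern $(a_2 - a_1,\, a_3 - a_2,\, a_4 - a_3,\, a_5 - a_4)$, both to secure $\left|4^{\wedge}_{\pm}A_5\right| \geq 25$ and to characterize when equality holds.
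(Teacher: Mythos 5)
Your reduction of the theorem to the five-element base case is exactly the paper's: the direct bound follows from Lemma \ref{Lemma-1} with $h=4$, $t=0$, and your handling of the inverse part ($k\geq 7$ via case (c) of Theorem \ref{Partial inverse thm} using only the direct base inequality; $k=6$ by forcing $\lvert 4^{\wedge}_{\pm}A_5\rvert = 25$ through the chain in Lemma \ref{Lemma-1} and then invoking the inverse base case together with case (b); $k=5$ being the base case itself) is correct and, if anything, slightly more explicit than the paper, which simply says the reduction follows from Lemma \ref{Lemma-1} and Theorem \ref{Partial inverse thm}.

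The genuine gap is that the base case --- $\lvert 4^{\wedge}_{\pm}A_5\rvert \geq 25$ for every five-element set of positive integers, with equality only for $a_1 \ast \{1,3,5,7,9\}$ --- is not proved but only planned, and this is not a routine verification: it is the entire content of the paper's Theorem \ref{main theorem}, carried out in Lemmas \ref{main lemma 1} through \ref{main lemma 4} by an exhaustive case analysis on which of the four gaps $a_i - a_{i-1}$ equal $2a_1$, with many sub-cases resolved by exhibiting a thirteenth positive element or by reducing to an explicit dilate such as $a_1\ast\{1,3,5,7,11\}$. Your proposed organizing device, the decomposition $4^{\wedge}_{\pm}A_5 = (a_5 + 3^{\wedge}_{\pm}A_4)\cup(-a_5 + 3^{\wedge}_{\pm}A_4)\cup\{\pm a_1\pm a_2\pm a_3\pm a_4\}$ combined with $\lvert 3^{\wedge}_{\pm}A_4\rvert \geq 16$, does not obviously make this easier: the two translated blocks interleave heavily whenever $a_5 < a_2+a_3+a_4$ (since then $\min(a_5+3^{\wedge}_{\pm}A_4) = a_5 - a_2-a_3-a_4 < 0 < \max(-a_5+3^{\wedge}_{\pm}A_4)$), so extracting $25$ distinct values, and characterizing when no $26$th appears, still requires precisely the gap-pattern case analysis you defer. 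Until that analysis is carried out --- or the collapse conditions in your chains are explicitly shown to force $a_{i+1}-a_i = 2a_1$ for all $i$ --- the argument establishes only the reduction, not the theorem.
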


In view of Lemma \ref{Lemma-1} and Theorem \ref{Partial inverse thm}, it suffices to prove the following theorem which implies the above theorem.

\begin{theorem}\label{main theorem}
	Let $A$ be a set of positive integers with $|A| = 5$. Then
	$$\left|4_{\pm}^{\wedge}A \right| \geq 25.$$
	Furthermore, if  $\left|4_{\pm}^{\wedge}A \right| = 25$, then $A = \min(A) \ast \{1,3,5,7,9\}$.
\end{theorem}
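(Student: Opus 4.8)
The plan is to analyze the five-element set $A=\{a_1,a_2,a_3,a_4,a_5\}$ with $a_1<a_2<a_3<a_4<a_5$ directly, exploiting the symmetry $4^{\wedge}_{\pm}A=-4^{\wedge}_{\pm}A$ together with the exact decomposition obtained by isolating $a_5$. Since $4^{\wedge}_{\pm}A$ is symmetric, it suffices for the lower bound to produce $13$ distinct elements lying in $[0,\infty)$: their negatives give $13$ elements in $(-\infty,0]$, the two families meet in at most the point $0$, and hence $|4^{\wedge}_{\pm}A|\ge 2\cdot 13-1=25$. I also record for the inverse problem that $25$ is odd, so in the equality case $0\in 4^{\wedge}_{\pm}A$ and the nonnegative part has exactly $13$ elements.

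Write $B=\{a_1,a_2,a_3,a_4\}$, $C=3^{\wedge}_{\pm}B$, and $D=4^{\wedge}_{\pm}B$. Classifying the elements of $4^{\wedge}_{\pm}A$ by the coefficient of $a_5$ gives the exact decomposition $4^{\wedge}_{\pm}A=(a_5+C)\cup(-a_5+C)\cup D$, and $C=-C$ makes the first two pieces reflections of each other. The already-known case $h=3$ of Conjecture \ref{Conjecture 1}, applied to the four-element set $B$, gives $|C|\ge 2\cdot 3\cdot 4-9+1=16$, with equality only when $B=a_1\ast\{1,3,5,7\}$; the known $h=k$ theorem gives $|D|\ge 11$, again with equality only when $B=a_1\ast\{1,3,5,7\}$.

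For the lower bound set $U=a_5+C$, so that $-U=-a_5+C$ and $|U|=|C|\ge 16$. Then $|U\cup(-U)|=2|U|-e$, where $e=|\{c\in C:c+2a_5\in C\}|$ counts the pairs of $C$ at distance $2a_5$, whence $|U\cup(-U)|\ge 32-e$. When $e\le 7$ this alone yields $|4^{\wedge}_{\pm}A|\ge 25$. A large $e$ forces $a_5$ to be small relative to $a_2+a_3+a_4$, i.e. $C$ to be a long, dense progression; in that regime the central piece $D$, whose near-zero elements survive uncancelled, must supply the missing elements. I would organize the argument into cases according to the size of $a_5$ against the partial sums of $B$: the regimes covered by Theorem \ref{direct theorem 2} and Theorem \ref{direct theorem 3} give at once $|4^{\wedge}_{\pm}A|\ge 26$, and in each remaining regime I would exhibit an explicit increasing chain of $13$ nonnegative elements in the style of the chains in the proof of Theorem \ref{thm 1}.

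For the inverse problem, suppose $|4^{\wedge}_{\pm}A|=25$. Then neither hypothesis of Theorem \ref{direct theorem 2} nor of Theorem \ref{direct theorem 3} can hold, every inequality used in the chosen count must be tight, and in particular $|C|=16$ and $|D|=11$; the inverse statements for $h=3$ and for $h=k$ then force $B=a_1\ast\{1,3,5,7\}$, so $B$ is an arithmetic progression with common difference $2a_1$. Finally the condition $e=7$ (equivalently, that $a_5+C$, $-a_5+C$, $D$ cover an interval with no gap and with minimal overlap) pins down $a_5=a_4+2a_1=9a_1$, giving $A=a_1\ast\{1,3,5,7,9\}$, as claimed. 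The main obstacle is that the gap pattern of $A$ — equivalently the position of $a_5$ and the overlap count $e$ — is not determined in advance, so the ordering of the mixed signed sums is not uniform and the proof must branch into several cases; checking distinctness of the $13$ nonnegative elements and, in the dense regime, controlling the interaction of the three pieces $a_5+C$, $-a_5+C$, and $D$ is the delicate technical core.
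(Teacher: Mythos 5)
Your setup is sound and genuinely different in organization from the paper's. The exact decomposition $4^{\wedge}_{\pm}A=(a_5+C)\cup(-a_5+C)\cup D$ with $C=3^{\wedge}_{\pm}B$, $D=4^{\wedge}_{\pm}B$, $B=A\setminus\{a_5\}$ is correct, as are $|C|\ge 16$ (the proved $h=3$ case of Conjecture \ref{Conjecture 1} with $k=4$), $|D|\ge 11$, and the identity $|U\cup(-U)|=2|C|-e$ with $e=|\{c\in C: c+2a_5\in C\}|$; when $e\le 7$ you are indeed done with the lower bound in one line. The paper instead classifies sets by which of the four consecutive gaps $a_i-a_{i-1}$ equal $2a_1$ and, in four long lemmas, writes out an explicit chain of $13$ distinct positive elements (hence $26$ elements by symmetry) in every non-extremal configuration. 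Your organizing parameter ($a_5$ versus the partial sums of $B$, i.e.\ the overlap count $e$) is a legitimate alternative axis. But the proposal stops exactly where the work begins: the entire regime $e\ge 8$ is handled only by ``I would organize\dots'' and ``I would exhibit\dots''. The assertion that large $e$ forces $C$ to be ``a long, dense progression'' is not proved and is not obviously true ($e\ge 8$ only says eight elements of $C$ have partners at distance $2a_5$), and the conditions of Theorem \ref{direct theorem 2} and Theorem \ref{direct theorem 3} are stated in terms of consecutive differences of $A$, not in terms of $a_5$ against partial sums, so those theorems do not carve out regimes matching your case split; nor do they jointly cover the complement of the $e\le 7$ case. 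This missing case analysis is precisely the bulk of the paper's proof.

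The inverse half has a concrete logical gap beyond incompleteness. From $|4^{\wedge}_{\pm}A|=25$ you infer that ``every inequality used in the chosen count must be tight, and in particular $|C|=16$ and $|D|=11$.'' But the only chain you have written is $25\ge|U\cup(-U)|=2|C|-e$, which gives $e\ge 2|C|-25$ and nothing more; without an a priori \emph{upper} bound on $e$, or a count that controls how much of $D$ lies inside $U\cup(-U)$, you cannot conclude $|C|=16$, and hence cannot yet invoke the $h=3$ inverse theorem to get $B=a_1\ast\{1,3,5,7\}$. Similarly, ``$e=7$ pins down $a_5=9a_1$'' is only meaningful \emph{after} $B$ is known to be $a_1\ast\{1,3,5,7\}$ (then $C=a_1\ast\{\pm1,\pm3,\ldots,\pm15\}$ and $e=16-a_5/a_1$, so $e=7$ does give $a_5=9a_1$); as written, the argument assumes the structure it is trying to derive. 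To salvage the plan you would need a genuine two-sided count of $|(a_5+C)\cup(-a_5+C)\cup D|$ in which all three cardinalities and both overlap terms appear, and then close all the tightness implications --- that, together with the $e\ge 8$ case analysis, is the delicate core you correctly identify but do not supply.
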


For the proof, we consider various cases as lemmas. Throughout this section, the following list of (not necessarily distinct) elements of  $4_{\pm}^{\wedge}A$ will be used in the proofs:
\begin{multicols}{2}
	\begin{enumerate}
		\item[] $x_{1} = -a_{1} + a_{2} - a_{3} + a_{4}$,
		\item[] $x_{2} = a_{1} + a_{2} - a_{3} + a_{4}$,
		\item[] $x_{3} = a_{1} - a_{2} + a_{3} + a_{4}$,
		\item[] $x_{4} = -a_{1} + a_{2} + a_{3} + a_{4}$,
		\item[] $x_{5} = a_{1} + a_{2} + a_{3} + a_{4}$,
		\item[] $x_{6} = a_{1} + a_{2} + a_{3} + a_{5}$,
		\item[] $x_{7} = a_{1} + a_{2} + a_{4} + a_{5}$,
		\item[] $x_{8} = a_{1} + a_{3} + a_{4} + a_{5}$,
		\item[] $x_{9} = a_{2} + a_{3} + a_{4} + a_{5}$,
		\item[] $y_{1} = -a_{1} + a_{2} + a_{4} - a_{5}$,
		\item[] $y_{2} = -a_{1} + a_{2} - a_{4} + a_{5}$,
		\item[] $y_{3} = -a_{1} - a_{3} + a_{4} + a_{5}$,
		\item[] $y_{4} = -a_{1} - a_{2} + a_{4} + a_{5}$,
		\item[] $y_{5} = a_{1} - a_{2} + a_{4} + a_{5}$,
		\item[] $y_{6} = -a_{1} + a_{2} + a_{4} + a_{5}$,
		\item[] $y_{7} = -a_{1} + a_{3} + a_{4} + a_{5}$,
		\item[] $z_{1} = a_{1} + a_{2} + a_{4} - a_{5}$,
		\item[] $z_{2} = a_{1} + a_{2} - a_{4} + a_{5}$,
		\item[] $z_{3} = a_{1} + a_{2} - a_{3} + a_{5}$,
		\item[] $z_{4} = a_{1} - a_{2} + a_{3} + a_{5}$,
		\item[] $z_{5} = -a_{1} + a_{2} + a_{3} + a_{5}$,
		
		\item[] $\alpha_{1} = -a_{2} + a_{3} - a_{4} + a_{5}$,
		\item[] $\alpha_{2} = -a_{2} - a_{3} + a_{4} + a_{5}$,
		\item[] $\alpha_{3} = -a_{2} + a_{3} + a_{4} + a_{5}$,
		
		\item[] $\beta_{1} = -a_{1} - a_{2} + a_{3} + a_{4}$,
		\item[] $\beta_{2} = -a_{1} - a_{2} + a_{3} + a_{5}$,
		\item[] $\gamma_{1} = -a_{1} + a_{2} + a_{3} - a_{4}$,
		\item[] $\gamma_{2} = a_{1} + a_{2} + a_{3} - a_{4}$,
		\item[] $\delta_{1} = a_{2} - a_{3} - a_{4} + a_{5}$,
		\item[] $\delta_{2} = -a_{1} + a_{2} - a_{3} + a_{5}$,
		\item[] $\epsilon_{1} = -a_{1} + a_{3} + a_{4} - a_{5}$,
		\item[] $\epsilon_{2} = a_{1} + a_{3} + a_{4} - a_{5}$.
	\end{enumerate}	
\end{multicols}
It is easy to see that
\begin{equation}\label{Inequalities}
	\left.
	\begin{array}{ll}
		\ 0 < x_{i} < x_{i+1} \ \text{for} \ i = 1, 2, \ldots, 8;\ &   y_{1} < z_{1},\\
		\  y_{i} < y_{i+1} \ \text{for} \ i = 1, 2, \ldots, 6;\ &  \{-\gamma_{1}, \gamma_{1}\} < x_{1} < \beta_{1} < x_{3},\\
		z_{1} < z_{2} < z_{3} < z_{4} < z_{5}, & -\delta_{1} < \alpha_{1} <  \alpha_{2} < \alpha_{3},\\
		-\gamma_{1} < \beta_{1} < \beta_{2}, &   \gamma_{1} < \gamma_{2},\\
		\delta_{1} < \delta_{2},& \epsilon_{1} < \epsilon_{2},\\
		x_{4} < y_{7} < x_{8}, &   x_{4} < y_{6} < x_{7}, \\
		x_{4} < z_{5} < x_{6}, & y_{1} < y_{2} < x_{6}, \\
		\alpha_{2} < y_{3}, &  z_{5} < y_{6}, \\
		x_{2} < z_{3}, &  x_{3} < z_{4} < y_{5}, \\
		x_{3} < y_{5} <  y_{6} < x_{7}, &  \beta_{2} < z_{4}, \\
		\delta_{2} < z_{3}, & x_{1} < \delta_{2}. \\

	\end{array}
	\right\}
\end{equation}
In (\ref{Inequalities}),  $\{-\gamma_{1}, \gamma_{1}\} = \{0\}$ if $\gamma_{1}=0$, and $\{-\gamma_{1},\gamma_{1}\}$ is a symmetric set of cardinality two if $\gamma_{1} \neq 0$.

\begin{lemma}\label{main lemma 1}
	Let $A = \{a_{1}, a_{2}, a_{3}, a_{4}, a_{5}\}$ be a set of positive integers, where $a_{1} < a_{2} < a_{3} < a_{4} < a_{5}$. Let $a_{i} - a_{i - 1}  \neq  2a_{1}$ for exactly one  ${i} \in [2,5]$ and $a_{i} - a_{i-1} = 2a_{1}$ for the remaining $i \in [2,5]$. Then $\left|4_{\pm}^{\wedge}A \right| \geq 26$.
\end{lemma}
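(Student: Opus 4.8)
The plan is to use the symmetry of $4^{\wedge}_{\pm}A$ about the origin: since $x \in 4^{\wedge}_{\pm}A$ if and only if $-x \in 4^{\wedge}_{\pm}A$, it suffices to produce $13$ distinct \emph{positive} elements of $4^{\wedge}_{\pm}A$. Their negatives are then $13$ distinct negative elements, disjoint from the positive ones, so $\left|4^{\wedge}_{\pm}A\right| \geq 26$ whether or not $0$ lies in the sumset. By (\ref{Inequalities}) the nine elements $x_1 < x_2 < \cdots < x_9$ are always positive and distinct, so the whole task reduces to exhibiting four more positive elements, each strictly between two consecutive members of the chain already in hand, and hence automatically distinct from everything.

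Write $d_i = a_i - a_{i-1}$ for $i = 2,3,4,5$, so the hypothesis says exactly one $d_j$ differs from $2a_1$ while the other three equal $2a_1$. The key mechanism is that for the genuine progression (all $d_i = 2a_1$) the auxiliary elements interleave the $x_i$ into exactly twelve positive values, because several of them coincide: one checks $z_5 = x_5$ (as $z_5 - x_5 = d_5 - 2a_1$), $\beta_1 = z_3$, $y_5 = x_4$ and $\epsilon_2 = x_1$, which is precisely why $\left|4^{\wedge}_{\pm}A\right| = 25$ holds there. A single deviating gap must break one of these coincidences and free a thirteenth positive value. I would organise the proof into the four cases $j = 2,3,4,5$; writing $d_j = 2a_1 + e$ with $e \neq 0$, in the subcase $e > 0$ the thirteenth element is $\epsilon_2$ (sitting strictly between $z_1$ and $x_1$) when $j = 4$, $\beta_1$ (between $z_3$ and $x_3$) when $j = 3$, $y_5$ (between $z_4$ and $x_4$) when $j = 2$, and $z_5$ (between $x_4$ and $x_6$, using $x_4 < z_5 < x_6$ from (\ref{Inequalities})) when $j = 5$. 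In each instance the required strict inequalities follow from (\ref{Inequalities}) together with the equalities $d_i = 2a_1$ for $i \neq j$, and the twelve baseline values are seen to be distinct since every consecutive difference is then a positive multiple of $a_1$ or of the form $2a_1 + e$ with $e > 0$.

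The main obstacle is not a single hard idea but the bookkeeping forced by the sign of the deviation. When $e < 0$ (the deviating gap is \emph{shorter} than $2a_1$) the baseline values can reorder, and in the extreme subcases — $e$ close to $-2a_1$, or values of $j$ for which an auxiliary element such as $z_1$ ceases to be positive, or for which two of the $z_i$ threaten to coincide — the particular thirteenth element named above may collapse. The care needed is to verify, case by case and for each sign of $e$, that whenever one candidate degenerates another element of the list $\{x_i, y_i, z_i, \alpha_i, \beta_i, \gamma_i, \delta_i, \epsilon_i\}$ takes its place, with a strict position still guaranteed by (\ref{Inequalities}), so that a thirteenth positive value always survives. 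Once this verification is complete in all four cases and both signs of $e$, the symmetry step delivers $\left|4^{\wedge}_{\pm}A\right| \geq 26$, as claimed.
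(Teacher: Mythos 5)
Your overall strategy --- exploit the symmetry of $4^{\wedge}_{\pm}A$ to reduce to exhibiting $13$ distinct positive elements, split into four cases according to which gap $d_j=a_j-a_{j-1}$ differs from $2a_1$, and interleave extra elements into the chain $x_1<\cdots<x_9$ --- is exactly the paper's, and your identification of which coincidence breaks in each case ($\epsilon_2=x_1$ for $j=4$, $\beta_1=z_3$ for $j=3$, $y_5=x_4$ for $j=2$, $z_5=x_5$ for $j=5$) is correct. But the argument has a genuine gap: you never specify the twelve baseline positive elements from which the freed thirteenth must be shown distinct, and the natural candidate list (the nine $x_i$ plus the three interleavers $\gamma_2,\beta_1,z_4$ of the arithmetic-progression case) does not survive the perturbation. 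For $j=4$ one has $\gamma_2=9a_1-a_4$, which is zero or negative once $a_4\geq 9a_1$; for $j=5$ the element $z_4=3a_1+a_5$ escapes the interval $(x_3,x_4)$ and collides with $x_4$ or $x_5$ at $a_5=11a_1$ or $13a_1$, so breaking $z_5=x_5$ only brings the count to twelve, not thirteen. This is precisely why the paper assembles a different twelve-element list in each case (e.g.\ using whichever of $\pm\gamma_1$ is positive in its Case C, and using $y_2$ rather than $z_4$ as the extra element in its Case D). Saying the new element lies ``between $z_1$ and $x_1$'' does not establish distinctness from a baseline that was never fixed and that need not contain $z_1$.

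More seriously, the entire subcase $e<0$ and all degenerate subcases are deferred with the assertion that whenever one candidate collapses ``another element of the list takes its place.'' That assertion is the actual content of the lemma, and it is not how the difficulty resolves: in the paper's proof, the degenerate subcases are settled not by substituting another sumset element but by observing that the degeneracy pins $A$ down to a specific dilate ($a_1\ast\{1,7,9,11,13\}$, $a_1\ast\{1,5,7,9,11\}$, $a_1\ast\{1,3,5,7,11\}$, $a_1\ast\{1,3,5,9,11\}$, and several others) and then verifying $\left|4^{\wedge}_{\pm}A\right|\geq 26$ for each explicit set directly. Without carrying out this finite but substantial verification --- which is essentially all of the paper's proof of the lemma --- what you have is a correct plan together with the easiest portion of the work, not a proof.
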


\begin{proof}
	
	Since signed sumset is symmetric, it suffices to prove that there are at least $13$ positive integers in $4_{\pm}^{\wedge}A$.
	
	\noindent\textbf{Case A}  $(a_{2} - a_{1} \neq 2a_{1}$ and $a_{5} - a_{4} = a_{4} - a_{3} = a_{3} - a_{2} = 2a_{1})$. In this case,
	\begin{itemize}
		\begin{multicols}{2}
			\item [] $\gamma_{1} = -a_{1} + a_{2} + a_{3} - a_{4} = -3a_{1} + a_{2} $,		
			\item [] $\gamma_{2} = a_{1} + a_{2} + a_{3} - a_{4} = a_{2} - a_{1}$,
			\item [] $x_{1} = -a_{1} + a_{2} - a_{3} + a_{4} = a_{2} + a_{1}$,
			\item [] $x_{2} = a_{1} + a_{2} - a_{3} + a_{4} = 3a_{1} + a_{2}$,
			\item[] $\beta_{1} = -a_{1} - a_{2} + a_{3} + a_{4} = 3a_{1} + a_{3}$, 
			\item [] $x_{3} = a_{1} - a_{2} + a_{3} + a_{4} = 3a_{1} + a_{4}$,
			\item [] $\alpha_{1} = -a_{2} + a_{3} - a_{4} + a_{5} = 4a_{1}$.	
		\end{multicols}
	\end{itemize}
	Clearly,
	\begin{equation}\label{eqn-A_{1}}
		0 \neq \gamma_{1} <\gamma_{2} < x_{1}, \ -\gamma_{1} < \alpha_{1} < x_{2} <\beta_{1} < x_{3}  \ \text{and} \ \alpha_{1} \neq x_{1}.
	\end{equation}
	If $a_{2} - a_{1} = a_{1}$, then $A = a_{1} \ast \{1,2,4,6,8\}$, and so $\left|4_{\pm}^{\wedge}A \right| \geq 26$. If $a_{2} - a_{1} \neq a_{1}$, then $ \gamma_{2} \neq - \gamma_{1}$. Note that, if $\gamma_{1} > 0$, then we have the following $12$ distinct elements of $4_{\pm}^{\wedge}A$
	\begin{equation}\label{eqn-A_{2}}
		0  < \gamma_{1} <  \gamma_{2}  < x_{1} < x_{2} < \beta_{1} < x_{3} < x_{4} < x_{5} < x_{6} < x_{7} < x_{8} < x_{9} ,
	\end{equation}
	and if $-\gamma_{1} > 0$, then we have the following $12$ distinct elements of $4_{\pm}^{\wedge}A$
	\begin{equation}\label{eqn-A_{3}}
		0  < \{-\gamma_{1},  \gamma_{2}\}  < x_{1} < x_{2} < \beta_{1} < x_{3} < x_{4} <x_{5} <x_{6} < x_{7}  < x_{8} < x_{9}.
	\end{equation}
	Therefore, in each case, we have at least $12$ positive integers in  $4_{\pm}^{\wedge}A$. Next we show the existence of at least one more positive integer in $4_{\pm}^{\wedge}A$ different from the elements listed in  (\ref{eqn-A_{2}}) and (\ref{eqn-A_{3}}). Consider the positive integer $\alpha_{1}$. Since $-\gamma_{1} < \alpha_{1} < x_{2}$ and  $\alpha_{1} \neq x_{1}$,  we have the following cases:\\
	\noindent$\clubsuit$ If $\gamma_{1} > 0$, $\alpha_{1} \neq \gamma_{1}$ and $\alpha_{1} \neq \gamma_{2}$, then  we can include $\alpha_{1}$ in the list (\ref{eqn-A_{2}}) to get the required number of elements listed below
	\[0  < \{\gamma_{1},  \gamma_{2},  \alpha_{1}, x_{1} \} < x_{2} < \beta_{1} < x_{3} < x_{4} < x_{5} <x_{6} < x_{7} < x_{8} < x_{9}.\]
	$\clubsuit$ If $ -\gamma_{1} > 0$ and $\alpha_{1} \neq \gamma_{2}$, then we can add $\alpha_{1}$ in  (\ref{eqn-A_{3}}) to get required number of elements
	\[0 < \{-\gamma_{1},  \gamma_{2},  \alpha_{1}, x_{1} \} < x_{2} < \beta_{1} < x_{3} < x_{4} < x_{5} <x_{6} < x_{7} < x_{8} < x_{9}.\]	
	$\clubsuit$ If  $\alpha_{1} = \gamma_{1}$, then  $a_{2} = 7a_{1}$. This gives  $A = a_{1}\ast\{1,7,9,11,13\}$, and so $|4_{\pm}^{\wedge}A| \geq 26$.\\
	$\clubsuit$ If   $\alpha_{1} = \gamma_{2}$, then  $a_{2} = 5a_{1}$. This gives  $A = a_{1}\ast\{1,5,7,9,11\}$, and so $|4_{\pm}^{\wedge}A| \geq 26$.
	
	In each case either $|4_{\pm}^{\wedge}A| \geq 26$ or there are at least $13$ distinct positive elements in  $4_{\pm}^{\wedge}A$. \\
	
	\noindent \textbf{Case B}  $(a_{3} - a_{2}  \neq  2a_{1}$ and $a_{5} - a_{4} = a_{4} - a_{3} = a_{2} - a_{1} = 2a_{1})$. In this case,
	\begin{multicols}{2}
		\begin{itemize}
			\item [] $a_{2} = 3a_{1}$, $a_{3} \neq 5a_{1}$,
			
			\item [] $\gamma_{2} = a_{1} + a_{2} + a_{3} - a_{4} = 2a_{1}$,
			\item [] $x_{1} = -a_{1} + a_{2} - a_{3} + a_{4} = 4a_{1}$,
			\item [] $\alpha_{1} = -a_{2} + a_{3} - a_{4} + a_{5} = -a_{1} + a_{3} $,
			\item [] $\alpha_{2} = -a_{2} - a_{3} + a_{4} + a_{5} = -a_{1} + a_{5} $,			
			\item [] $y_{3} = -a_{1} - a_{3} + a_{4} + a_{5}$,
			\item [] $x_{7} = a_{1} + a_{2} + a_{4} + a_{5}$,
			\item [] $y_{7} = -a_{1} + a_{3} + a_{4} + a_{5}$,
			\item [] $x_{8} = a_{1} + a_{3} + a_{4} + a_{5}$,		
			\item [] $y_{6} = -a_{1} + a_{2} + a_{4} + a_{5}$,	
			\item [] $z_{2} = a_{1} + a_{2} -a_{4} + a_{5} = 6a_{1}$.
		\end{itemize}
	\end{multicols}
	\noindent Clearly,
	\begin{equation}\label{Eqn-B_{1}}
		0 < \gamma_{2} < \{x_{1}, \  \alpha_{1}\}, x_{1} \neq \alpha_{1}, \ y_{7} \neq x_{7}, \  x_{1} < z_{2},
	\end{equation}
	and
	\begin{equation}\label{Eqn-B_{2}}
		x_{1} < z_{2} = 6a_{1} = 3a_{1} + a_{2} < 3a_{1} + a_{3} = a_{1} + a_{4} = -a_{1} + a_{5} = \alpha_{2}.
	\end{equation}
	It follows from (\ref{Eqn-B_{1}}), (\ref{Eqn-B_{2}}) and (\ref{Inequalities}) that
	\begin{equation}\label{Eqn-B_{3}}
		0 < \gamma_{2} < \{x_{1}, \alpha_{1}\} < \alpha_{2} < y_{3} < y_{4} < y_{5} < y_{6} <\{ y_{7}, x_{7}\} < x_{8} < x_{9}.
	\end{equation}
	Therefore, we have at least $12$ positive integers in $4_{\pm}^{\wedge}A$. Now we show the existence of at least one more positive integer in  $4_{\pm}^{\wedge}A$ different from the listed integers in $(\ref{Eqn-B_{3}})$. Since $x_{1} < z_{2} <  \alpha_{2}$, we have the following cases:\\
	\noindent $\clubsuit$ If $z_{2} \neq \alpha_{1}$, then  we have at least $13$ positive integers in $4_{\pm}^{\wedge}A$, listed below
	\[	0  < \gamma_{2} < \{x_{1}, \alpha_{1}, z_{2}\} < \alpha_{2} < y_{3} < y_{4} < y_{5} < y_{6} <\{ y_{7}, x_{7}\} < x_{8} < x_{9}.\]
	\noindent $\clubsuit$ If $z_{2} = \alpha_{1}$, then we have $a_{3} = 7a_{1}$ and which implies $A = a_{1} \ast \{1,3,7,9,11\}$, and so $|4_{\pm}^{\wedge}A| \geq 26$.\\
	
	\noindent\textbf{Case C} $(a_{4} - a_{3} \neq 2a_{1}$ and $a_{5} - a_{4} = a_{3} - a_{2} = a_{2} - a_{1} = 2a_{1})$. In this case,
	\begin{multicols}{2}
		\begin{itemize}
			\item [] $a_{2} = 3a_{1}, a_{3} = 5a_{1} < a_{4}, a_{4} \neq 7a_{1}$,
			
			\item [] $\gamma_{1} = -a_{1} + a_{2}  + a_{3} - a_{4} = 7a_{1} - a_{4}$,
			
			\item [] $x_{1} = -a_{1} + a_{2} - a_{3} + a_{4} = -3a_{1} + a_{4}$,	
			\item [] $x_{2} = a_{1} + a_{2} - a_{3} + a_{4} = -a_{1} + a_{4}$,
			\item [] $\beta_{1} = -a_{1} - a_{2} + a_{3} + a_{4} = a_{1} + a_{4}$,	
			\item [] $x_{3} = a_{1} - a_{2} + a_{3} + a_{4} = 3a_{1} + a_{4}$,
			\item [] $z_{1} = a_{1} + a_{2} + a_{4} - a_{5} = 2a_{1}$,		
			\item [] $x_{6} = a_{1} + a_{2} + a_{3} +  a_{5} =  9a_{1}  + a_{5}$,
			\item [] $y_{6} = -a_{1} + a_{2} + a_{4} + a_{5} = 2a_{1} + a_{4} + a_{5}$.
			
		\end{itemize}
	\end{multicols}
	\noindent Clearly,
	\begin{equation}\label{Eqn-C_{1}}
		-\gamma_{1} < x_{1} < x_{2} < \beta_{1} < x_{3} \ \text{and} \  x_{6} \neq y_{6}.
	\end{equation}
	Also
	\begin{equation}\label{Eqn-C_{2}}
		\gamma_{1} = -a_{1} + a_{2} + a_{3} - a_{4} = 2a_{1} + a_{3} - a_{4} < 2a_{1} = z_{1} <-3a_{1} + a_{4} = x_{1},
	\end{equation}
	and
	\begin{equation}\label{Eqn-C_{3}}
		x_{5} = a_{1} + a_{2} + a_{3} + a_{4} < a_{1} + a_{2} + 2a_{4} = 2a_{1} + a_{4} + a_{5} = y_{6}.
	\end{equation}
	Since $\gamma_{1} \neq 0$, therefore, it follows   from (\ref{Eqn-C_{1}}), (\ref{Eqn-C_{2}}), (\ref{Eqn-C_{3}}) and (\ref{Inequalities})  that
	\[0  < \{-\gamma_{1} \ \text{or} \ \gamma_{1}\} < x_{1} < x_{2} < \beta_{1} < x_{3} <x_{4} < x_{5} < \{x_{6}, y_{6}\} < x_{7} < x_{8} < x_{9}\]
	and	
	\[ \gamma_{1} < z_{1} < x_{1}.\]
	Therefore, we have at least $13$ positive integers in $4_{\pm}^{\wedge}A$, except in the case when $z_{1} = -\gamma_{1}$. If $z_{1} = -\gamma_{1}$, then we have  $a_{4} = 9a_{1}$, which implies that $A = a_{1} \ast \{1,3,5,9,11\}$, and so $|4_{\pm}^{\wedge}A| \geq 26$.\\
	
	\noindent\textbf{Case D}  $(a_{5} - a_{4} \neq 2a_{1}$ and $a_{4} - a_{3} = a_{3} - a_{2} = a_{2} - a_{1} = 2a_{1})$. In this case,
	\begin{multicols}{2}
		\begin{itemize}
			\item [] $a_{2} = 3a_{1}, a_{3} = 5a_{1}, a_{4} = 7a_{1}$,
			\item [] $a_{5} \neq 9a_{1}, a_{5} > 7a_{1}$,
			
			\item[] $\gamma_{2} = a_{1} + a_{2} + a_{3} - a_{4}  = 2a_{1}$,
			\item[] $x_{1} = -a_{1} + a_{2}  - a_{3} + a_{4} = 4a_{1}$,
			\item[] $x_{2} = a_{1} + a_{2} - a_{3} + a_{4} = 6a_{1}$,
			\item[] $\beta_{1} = -a_{1} - a_{2} + a_{3} + a_{4} = 8a_{1}$,	
			\item[] $x_{3} = a_{1} - a_{2} + a_{3} + a_{4} = 10a_{1}$,
			\item[] $x_{4} = -a_{1} + a_{2} + a_{3} + a_{4} = 14a_{1}$,
			\item[] $z_{5} = -a_{1} + a_{2} + a_{3} + a_{5} = 7a_{1} + a_{5}$,
			\item[] $x_{5} = a_{1} + a_{2} + a_{3} + a_{4} = 16a_{1}$,
			\item[] $x_{6} = a_{1} + a_{2} + a_{3} + a_{5} = 9a_{1} + a_{5}$,
			\item [] $y_{2} = -a_{1} + a_{2} - a_{4} + a_{5} = 2a_{1} - a_{4} + a_{5}$.
		\end{itemize}
	\end{multicols}
	\noindent	Clearly, we have 	
	\begin{equation}\label{Eqn-D_{1}}
		0 < \gamma_{2} < x_{1} < x_{2} < \beta_{1} < x_{3} < x_{4} < \{z_{5},x_{5}\} < x_{6} < x_{7} < x_{8} < x_{9}.
	\end{equation}
	Therefore, we have atleast $12$ positive integers in $4_{\pm}^{\wedge}A$. Next, we show the existence of at least one more positive integer in $4_{\pm}^{\wedge}A$ different from the elements listed in $(\ref{Eqn-D_{1}})$. Since  $y_{2} \neq x_{1}$ and $ \gamma_{2} < y_{2} < z_{5} < x_{6}$, we have the following cases:\\
	$\clubsuit$ If $y_{2}\notin \{x_{2}, \beta_{1}, x_{3}, x_{4}, x_{5}\}$, then we have at least $13$ positive  integers in $4_{\pm}^{\wedge}A $, listed below
	\[0  < \gamma_{2} < \{x_{1}, x_{2}, \beta_{1}, x_{3}, x_{4}, z_{5}, x_{5}, y_{2}\} < x_{6} < x_{7} < x_{8} < x_{9}.\]
	$\clubsuit$ If $y_{2} = x_{2}$, then  $a_{5} = 11a_{1}$. This gives $A = a_{1}  \ast \{1, 3, 5, 7, 11\} $, and so $|4_{\pm}^{\wedge}A| \geq 26$.\\
	$\clubsuit$ If $y_{2} = \beta_{1}$, then $a_{5} = 13a_{1}$. This gives $A = a_{1}  \ast \{1, 3, 5, 7, 13\} $, and so $|4_{\pm}^{\wedge}A| \geq 26$.\\
	$\clubsuit$ If $y_{2} = x_{3}$, then $a_{5} = 15a_{1}$. This gives $A = a_{1}  \ast \{1, 3, 5, 7, 15\} $, and so $|4_{\pm}^{\wedge}A| \geq 26$.\\
	$\clubsuit$ If $y_{2} = x_{4}$, then $a_{5} = 19a_{1}$. This gives $A = a_{1}  \ast \{1, 3, 5, 7, 19\} $, and so $|4_{\pm}^{\wedge}A| \geq 26$.\\
	$\clubsuit$ If $y_{2} = x_{5}$, then $a_{5} = 21a_{1}$. This gives $A = a_{1}  \ast \{1, 3, 5, 7, 21\} $, and so $|4_{\pm}^{\wedge}A| \geq 26$.
\end{proof}

\begin{lemma}\label{main lemma 2}		
	Let $ A = \{a_{1}, a_{2}, a_{3}, a_{4}, a_{5}\}$ be a set of positive integers, where $a_{1} < a_{2} < a_{3} < a_{4} < a_{5}$. Let $a_{i} - a_{i - 1} \neq 2a_{1}$  for exactly two  $ i \in [2,5]$  and $a_{i} - a_{i-1} = 2a_{1}$ for the remaining two $i \in [2,5]$. Then $\left|4_{\pm}^{\wedge}A \right| \geq 26$.
\end{lemma}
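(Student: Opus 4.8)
The plan is to mirror the proof of Lemma \ref{main lemma 1}. Since the restricted signed sumset is symmetric, it again suffices to exhibit $13$ distinct \emph{positive} elements of $4_{\pm}^{\wedge}A$: their negatives then account for the remaining $13$ elements, and these two batches are disjoint (none is $0$), so $\left|4_{\pm}^{\wedge}A\right| \geq 26$. The only difference from Lemma \ref{main lemma 1} is combinatorial, namely that there are now two deviating gaps rather than one. Accordingly I would split the argument into the $\binom{4}{2}=6$ configurations determined by which pair of indices $i\in[2,5]$ satisfies $a_i-a_{i-1}\neq 2a_1$, the other two consecutive differences being equal to $2a_1$: (i) $a_2-a_1,\,a_3-a_2\neq 2a_1$; (ii) $a_2-a_1,\,a_4-a_3\neq 2a_1$; (iii) $a_2-a_1,\,a_5-a_4\neq 2a_1$; (iv) $a_3-a_2,\,a_4-a_3\neq 2a_1$; (v) $a_3-a_2,\,a_5-a_4\neq 2a_1$; (vi) $a_4-a_3,\,a_5-a_4\neq 2a_1$.

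Within a fixed case I would substitute the two relations $a_j-a_{j-1}=2a_1$ into the fixed list of elements $x_i,y_i,z_i,\alpha_i,\beta_i,\gamma_i,\delta_i,\epsilon_i$ introduced at the start of this section, thereby rewriting the relevant ones as explicit affine expressions in $a_1$ and the two free values. Feeding these simplifications into the universal inequality chain (\ref{Inequalities})---exactly as the case-specific chains such as (\ref{eqn-A_{2}}) were obtained in Lemma \ref{main lemma 1}---I would then read off a strictly increasing run of $13$ positive elements of $4_{\pm}^{\wedge}A$, which gives the bound. Because two gaps now deviate, the set is more ``spread out'' than in Lemma \ref{main lemma 1}, so in most cases the full run of $13$ should be visibly distinct.

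The main obstacle, as in Lemma \ref{main lemma 1}, is that for particular values of the two free gaps a pair of listed candidates can coincide, so the naive increasing run guarantees only $12$ elements and a thirteenth must be supplied from the remaining symbols in the list. Each such collision is governed by a single linear relation among the $a_i$; whenever it holds it pins down $A$ as one specific dilate $a_1\ast S$ with $S$ an explicit five-element set (two of whose gaps deviate from $\{1,3,5,7,9\}$), and for each of these finitely many exceptional sets one checks $\left|4_{\pm}^{\wedge}A\right|\geq 26$ by direct computation, precisely as the sets $a_1\ast\{1,3,7,9,11\}$, $a_1\ast\{1,3,5,9,11\}$ and their analogues were disposed of in Lemma \ref{main lemma 1}. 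I expect the configurations with adjacent deviating gaps---namely (i), (iv), (vi)---to require the most bookkeeping, since there the two free parameters are coupled through more of the elements and more collision relations can arise, whereas the separated configurations (ii), (iii), (v) should be closer to the generic situation in which all $13$ elements are manifestly distinct.
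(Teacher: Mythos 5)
Your proposal is correct and follows essentially the same route as the paper: the authors likewise reduce to exhibiting $13$ distinct positive elements of $4_{\pm}^{\wedge}A$ by symmetry, split into the same six configurations (labelled by which two consecutive differences equal $2a_1$, the complement of your labelling), substitute the two equalities into the fixed list of elements to obtain a chain of $12$ distinct positive values, and then resolve the finitely many collision relations by identifying each exceptional $A$ as an explicit dilate and checking it directly. The only difference is bookkeeping; your prediction about which configurations are hardest is not quite borne out (all six cases in the paper need collision handling), but this does not affect the validity of the plan.
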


\begin{proof} To prove $\left|4_{\pm}^{\wedge}A \right| \geq 26$, it is sufficient to prove that there are at least $13$ positive integers in $4_{\pm}^{\wedge}A$.
	
	\noindent \textbf{Case A} $(a_{5} - a_{4}  = a_{4} - a_{3} = 2a_{1}$,   $a_{3} - a_{2}  \neq 2a_{1}$ and  $a_{2} - a_{1}  \neq  2a_{1})$. In this case,
	\begin{multicols}{2}
		\begin{itemize}
			\item [] $z_{1} = a_{1} + a_{2} + a_{4} - a_{5} = -a_{1} + a_{2}$,
			\item [] $\epsilon_{2} = a_{1} + a_{3} + a_{4} - a_{5} =-a_{1} + a_{3}$,
			\item [] $x_{1} = -a_{1} + a_{2} - a_{3} + a_{4} = a_{1} + a_{2}$,
			\item [] $\beta_{1} = -a_{1} - a_{2} + a_{3} + a_{4} = a_{1} - a_{2} + 2a_{3} $,
			\item []$\beta_{2} = -a_{1} - a_{2} + a_{3} + a_{5} = a_{1} -a_{2} + a_{3} +a_{4}$,
			\item []$y_{4} = -a_{1} - a_{2} + a_{4} + a_{5} = a_{1} - a_{2} + 2a_{4}$,
			\item []$x_{5} = a_{1} + a_{2} + a_{3} + a_{4} = 3a_{1} + a_{2} + 2a_{3}$,
			\item []$x_{6} = a_{1} + a_{2} + a_{3} + a_{5}$,
			\item []$x_{7} = a_{1} + a_{2} + a_{4} + a_{5}$,
			\item []$y_{7} = -a_{1} + a_{3} + a_{4} + a_{5}$,
			\item []$x_{8} = a_{1} + a_{3} + a_{4} + a_{5}$,
			\item []$x_{9} = a_{2} + a_{3} + a_{4} + a_{5}$,
			\item []$x_{4} = -a_{1} + a_{2} + a_{3} + a_{4}$,
			\item []$z_{3} = a_{1} + a_{2} - a_{3} + a_{5} =a_{1}+a_{2}+4a_{1}= 5a_{1} + a_{2}$.
			
		\end{itemize}
	\end{multicols}
	\noindent	Clearly, $\epsilon_{2} \neq x_{1}$, $x_{7} \neq y_{7}$, $x_{6} < y_{7}$ and $x_{5}  >  y_{4}$. Therefore, we have
	\begin{equation}\label{Lemma 3.4 eq-1}
		0 <z_{1} < \{\epsilon_{2}, x_{1}\} < \beta_{1} < \beta_{2} < y_{4} < x_{5} < x_{6} <\{x_{7}, y_{7}\} <x_{8} < x_{9},
	\end{equation}
	\[ \beta_{2} < x_{4} < x_{5},\]
	and
	\begin{equation*}
		x_{1} < z_{3} < y_{4}.
	\end{equation*}
	In (\ref{Lemma 3.4 eq-1}), we have $12$ distinct positive integers of $4_{\pm}^{\wedge}A$. Now, we show that there exists at least one more positive integer in $4_{\pm}^{\wedge}A$ distinct from the listed integers in (\ref{Lemma 3.4 eq-1}). Consider the positive integers $x_{4}$ and $z_{3}$. If $ x_{4} \neq y_{4}$, then  we have at least $13$ positive integers in $4_{\pm}^{\wedge}A$ listed below
	\[	0 <z_{1} < \{\epsilon_{2}, x_{1}\} < \beta_{1} < \beta_{2} < \{ y_{4}, x_{4} \} < x_{5} < x_{6} <\{x_{7}, y_{7}\} <x_{8} < x_{9}.\]
	Assume $ x_{4} = y_{4}$.  Then $a_{2}=2a_{1}$. Now, consider following cases:
	\begin{enumerate}
		\item  If  $z_{3} \notin \{\epsilon_{2}, \beta_{1}, \beta_{2}\}$, then we have at least $13$ positive integers in $4_{\pm}^{\wedge}A$, listed below
		\[	0 <z_{1} < \{\epsilon_{2}, x_{1},  \beta_{1}, \beta_{2}, z_{3} \} <  y_{4} < x_{5} < x_{6} <\{x_{7}, y_{7}\} <x_{8} < x_{9}.\]
		\item  If $z_{3} = \epsilon_{2}$, then  $a_{3} = 8a_{1}$. This gives $A =a_{1} \ast \{1,2,8,10,12\}$, and so $\left|4_{\pm}^{\wedge}A \right| \geq 26$.
		
		\item  If $z_{3} = \beta_{1}$, then  $a_{3} = 4a_{1}$. This gives $ A =a_{1} \ast \{1,2,4,6,8\}$, and so $\left|4_{\pm}^{\wedge}A \right| \geq 26$.
		
		\item  If $z_{3} = \beta_{2}$, then   $a_{3} = 3a_{1}$. This gives $ A =  a_{1} \ast \{1,2,3,5,7\}$, and so $\left|4_{\pm}^{\wedge}A \right| \geq 26$.
	\end{enumerate}
	
	\noindent \textbf{Case B}  $(a_{5} - a_{4}  =  a_{3} - a_{2}  = 2a_{1}$, $a_{4} - a_{3} \neq 2a_{1}$,    and  $a_{2} - a_{1}  \neq  2a_{1})$. In this case,
	
	\begin{multicols}{2}
		\begin{itemize}
			\item [] $a_{5} - a_{3} > a_{5} - a_{4} = 2a_{1}$,
			\item [] $a_{4} - a_{2} > a_{3} -a_{2} = 2a_{1}$,
			
			\item [] $z_{1} = a_{1} + a_{2} + a_{4} - a_{5} = a_{2} - a_{1}$,
			\item [] $x_{1} = -a_{1} + a_{2} - a_{3} + a_{4} = -3a_{1} + a_{4}$,
			\item [] $x_{2} = a_{1} + a_{2} - a_{3} + a_{4} = -a_{1} + a_{4}$,
			\item [] $\beta_{1} = - a_{1} - a_{2} + a_{3} + a_{4} = a_{1} + a_{4}$,
			\item [] $x_{3} = a_{1} - a_{2} + a_{3} + a_{4} = 3a_{1} + a_{4}$,
			\item [] $x_{4} = -a_{1} + a_{2} + a_{3} + a_{4} = a_{1} + 2a_{2} + a_{4}$,
			\item [] $x_{5} = a_{1} + a_{2} + a_{3} + a_{4} = 3a_{1} + 2a_{2} + a_{4}$,
			\item [] $x_{6} = a_{1} + a_{2} + a_{3} + a_{5} = 3a_{1} + 2a_{2} + a_{5}$,
			\item [] $y_{6} = -a_{1} + a_{2} + a_{4} + a_{5}  $,
			\item [] $x_{7} = a_{1} + a_{2} + a_{4} + a_{5}$,
			\item [] $x_{8} = a_{1} + a_{3} + a_{4} + a_{5}$,
			\item [] $x_{9} = a_{2} + a_{3} + a_{4} + a_{5}$,
			\item [] $z_{2} = a_{1} + a_{2} - a_{4} + a_{5} = 3a_{1} + a_{2}$,
			\item [] $z_{4}  = a_{1} - a_{2} + a_{3} + a_{5} = 3a_{1} + a_{5}$.
		\end{itemize}
	\end{multicols}
	\noindent		Note also that	$x_{6} = a_{1} + a_{2} + a_{3} + a_{5}  \neq -a_{1} + a_{2} + a_{4} + a_{5} = y_{6} $. Therefore, we have
	\begin{equation}\label{Lemma 3.4 eq-2}
		0  < z_{1} < x_{1} <  x_{2} < \beta_{1} < x_{3} < x_{4} <x_{5} <\{x_{6}, y_{6}\} < x_{7} <x_{8} <x_{9}
	\end{equation}
	and
	\begin{equation*}
		z_{1} < z_{2} < x_{3} < z_{4} < x_{5}.
	\end{equation*}
	In (\ref{Lemma 3.4 eq-2}), we have $12$ distinct positive integers of $4_{\pm}^{\wedge}A$. Next, we show  that there is at least one more positive integer in   $4_{\pm}^{\wedge}A$ distinct from the elements listed in (\ref{Lemma 3.4 eq-2}). Consider  the following cases:\\
	$\clubsuit$ If $z_{4}  \neq  x_{4}$, then we have at least $13$ positive  integers in $4_{\pm}^{\wedge}A$, listed below
	\[	0 < z_{1} < x_{1} <  x_{2} < \beta_{1} < x_{3} <\{ x_{4}, z_{4} \} < x_{5} < \{x_{6}, y_{6}\} < x_{7} <x_{8} <x_{9}.\]
	$\clubsuit$ If $z_{4} = x_{4}$ and $z_{2} \neq x_{1}$, then   $a_{2} = 2a_{1}$, $a_{3} = 4a_{1}$, $z_{2} < \beta_{1}$ and $z_{2} \neq x_{2}$. Therefore,  we have at least $13$ positive integers in $4_{\pm}^{\wedge}A$, listed below
	\[	0 < z_{1} < \{x_{1}, x_{2}, z_{2} \}< \beta_{1} < x_{3} < x_{4} < x_{5} < \{x_{6}, y_{6}\} < x_{7} <x_{8} <x_{9}.\]
	$\clubsuit$ If $z_{4} = x_{4}$ and $z_{2} = x_{1}$, then $a_{2} = 2a_{1}$, $a_{3} = 4a_{1}$, $a_{4} = 8a_{1}$. This gives $A=a_{1}\ast\{1,2,4,8,10\}$, and so $\left|4_{\pm}^{\wedge}A \right| \geq 26$.\\
	
	\noindent \textbf{Case C} $(a_{4} - a_{3} = a_{3} - a_{2}  =2a_{1}$, $a_{5} - a_{4}  \neq  2a_{1} $ and  $a_{2} - a_{1}  \neq  2a_{1})$. In this case,
	\begin{multicols}{2}
		\begin{itemize}
			\item [] $a_{2} \neq 3a_{1}$,
			\item [] $\gamma_{1} = -a_{1} + a_{2} + a_{3} - a_{4}  \neq 0$,
			\item [] $\gamma_{2} = a_{1} + a_{2} + a_{3} - a_{4} = -a_{1} + a_{2}$,
			\item [] $x_{1} = -a_{1} + a_{2} - a_{3} + a_{4} = -3a_{1} + a_{4}$,
			\item [] $x_{2} = a_{1} + a_{2} - a_{3} + a_{4} = -a_{1} + a_{4}$,
			\item [] $\beta_{1} = -a_{1} - a_{2} + a_{3} + a_{4} =a_{1} + a_{4} $,
			\item [] $x_{3} = a_{1} - a_{2} + a_{3} + a_{4} = 3a_{1} + a_{4}$,
			\item [] $x_{4} = -a_{1} + a_{2} + a_{3} + a_{4} = a_{1} + 2a_{2} + a_{4}$,
			\item [] $x_{5} = a_{1} + a_{2} + a_{3} + a_{4} = 3a_{1} +2a_{2} +a_{4}$,
			\item [] $x_{6} = a_{1} + a_{2} + a_{3} + a_{5} = 3a_{1} + 2a_{2} + a_{5}$,
			\item [] $z_{5} = -a_{1} + a_{2} + a_{3} + a_{5} = a_{1} + 2a_{2} + a_{5}$,
			\item [] $y_{4} = -a_{1} - a_{2} + a_{4} + a_{5}$.
		\end{itemize}
	\end{multicols}
	\noindent Clearly,
	\begin{equation}\label{Lemma 3.4 eq-3}
		0< \gamma_{2} < x_{1} < x_{2} < \beta_{1} < x_{3} < x_{4} < \{x_{5}, z_{5}\} < x_{6} < x_{7} < x_{8} < x_{9}.
	\end{equation}
	In $(\ref{Lemma 3.4 eq-3})$, we have  $12$ distinct positive integers of $4_{\pm}^{\wedge}A$. Next, we show the existence of at least one more positive integer in $4_{\pm}^{\wedge}A$ different from the elements listed in $(\ref{Lemma 3.4 eq-3})$.
	Now, consider the following cases:\\
	$\clubsuit$ If $\gamma_{1} > 0$, then  we have  at least $13$ positive integers in  $4_{\pm}^{\wedge}A$, listed below
	\[0< \gamma_{1} < \gamma_{2} < x_{1} < x_{2} < \beta_{1} < x_{3} < x_{4} < \{x_{5}, z_{5}\} < x_{6} < x_{7} < x_{8} < x_{9}.\]
	$\clubsuit$ If $-\gamma_{1} > 0$ and  $\gamma_{2}  \neq  -\gamma_{1}$, then we have  at least $13$ positive integers in  $4_{\pm}^{\wedge}A$, listed below
	\[0< \{ -\gamma_{1}, \gamma_{2} \} < x_{1} < x_{2} < \beta_{1} < x_{3} < x_{4} < \{x_{5}, z_{5}\} < x_{6} < x_{7} < x_{8} < x_{9}.\]
	$\clubsuit$ If $-\gamma_{1} > 0$, $\gamma_{2}  = -\gamma_{1} $ and $y_{4} \neq x_{5}$, then $a_{2} = 2a_{1}$,  $a_{3} = 4a_{1}, a_{4} = 6a_{1} < a_{5}$ and $y_{4} =  -a_{1} - a_{2} + a_{4} + a_{5} = 3a_{1} + a_{5}$. It is easy to see that $x_{3} < y_{4} < z_{5}  < x_{6}$ and $y_{4} \neq x_{4}$. Therefore, we have at least $13$ positive integers in  $4_{\pm}^{\wedge}A$, listed below
	\[	0< \gamma_{2} < x_{1} < x_{2} < \beta_{1} < x_{3} <\{ x_{4}, x_{5}, z_{5}, y_{4}\} < x_{6} < x_{7} < x_{8} < x_{9}.\]
	$\clubsuit$ If $-\gamma_{1} > 0$, $\gamma_{2}  = -\gamma_{1} $ and $y_{4} = x_{5}$, then $a_{2} = 2a_{1}$,  $a_{3} = 4a_{1}, a_{4} = 6a_{1}$, $a_{5} = 10a_{1}$, and so  $A = a_{1} \ast \{1,2,4,6,10\}$.\\
	In each of these cases, we have $\left|4_{\pm}^{\wedge}A \right| \geq 26$.\\
	
	\noindent \textbf{Case D} $(a_{5} - a_{4} =a_{2} - a_{1}   = 2a_{1} $, $a_{4} - a_{3}\neq 2a_{1}$ and    $a_{3} - a_{2}  \neq 2a_{1})$. In this case,
	\begin{multicols}{2}
		\begin{itemize}
			\item [] $a_{2} = 3a_{1}, a_{3} \neq 5a_{1}, a_{3} > 3a_{1}$,
			
			\item [] $z_{1} = a_{1} + a_{2} + a_{4} - a_{5}  = 2a_{1} $,
			\item [] $y_{2} = -a_{1} + a_{2} - a_{4} + a_{5} = 4a_{1}$,
			\item [] $x_{2} = a_{1} + a_{2} - a_{3} + a_{4} = 4a_{1} -a_{3} +a_{4}$,
			\item [] $z_{2} = a_{1} + a_{2} - a_{4} + a_{5} = 6a_{1}$,
			\item [] $z_{3} = a_{1} + a_{2} - a_{3} + a_{5} = 6a_{1} - a_{3} +a_{4}$,
			\item [] $z_{4} = a_{1} - a_{2} + a_{3} + a_{5} = -2a_{1} + a_{3} +a_{5} $,
			\item [] $y_{5} = a_{1} - a_{2} + a_{4} + a_{5} =-2a_{1} + a_{4} + a_{5}$,
			\item [] $y_{6} =  -a_{1} + a_{2} + a_{4} + a_{5} =  2a_{1} + a_{4} + a_{5}$,
			\item [] $x_{7} = a_{1} + a_{2} + a_{4} + a_{5} = 4a_{1} + a_{4} +a_{5}$,
			\item [] $y_{7} = -a_{1} + a_{3} + a_{4} + a_{5}$,
			\item [] $x_{1} = -a_{1} + a_{2} - a_{3} + a_{4} = 2a_{1} -a_{3} +a_{4} $,
			\item [] $\beta_{2} = -a_{1} - a_{2} + a_{3} + a_{5} = -2a_{1} +a_{3} + a_{4}$.
		\end{itemize}
	\end{multicols}
	\noindent	Clearly,
	\begin{equation}\label{Lemma 3.4 eq-4}
		0 < z_{1} < y_{2} < \{x_{2},z_{2}\} < z_{3} < z_{4} < y_{5} < y_{6} < \{x_{7},y_{7}\} < x_{8} < x_{9},
	\end{equation}
	\begin{equation}\label{Lemma 3.4 eq-5}
		x_{1} \neq y_{2}, z_{1} < x_{1} <x_{2} < z_{3} \ \text{and} \ x_{2} <  \beta_{2} < z_{4}.
	\end{equation}
	We have $12$ distinct positive integers in the list (\ref{Lemma 3.4 eq-4}), now we show the existence of at least one more positive integer different from the listed integers in (\ref{Lemma 3.4 eq-4}). Consider $x_{1}$ and $\beta_{2}$, and consider the following cases:\\
	$\clubsuit$	If $x_{1} \neq z_{2}$, then  we have at least $13$ positive integers in $4_{\pm}^{\wedge}A$, listed below
	\[0 < z_{1} < \{y_{2}, x_{1}, x_{2}, z_{2}\} < z_{3} < z_{4} < y_{5} < y_{6} < \{x_{7},y_{7}\} < x_{8} < x_{9}.\]
	$\clubsuit$ If $x_{1} = z_{2}$ and $\beta_{2} \neq z_{3}$, then  we have  at least $13$ positive integers in $4_{\pm}^{\wedge}A$, listed below
	\[	0 < z_{1} < y_{2} < x_{1}< x_{2} < \{ z_{3}, \beta_{2}\} < z_{4} < y_{5} < y_{6} < \{x_{7},y_{7}\} < x_{8} < x_{9}.\]
	$\clubsuit$ If $x_{1} = z_{2}$ and $\beta_{2} = z_{3}$, then $a_{3} = 4a_{1}, a_{4} = 8a_{1}$ and $a_{5} = 10a_{1}$, which implies that $A=a_{1}\ast\{1,3,4,8,10\}$, and so $\left|4_{\pm}^{\wedge}A \right| \geq 26$.\\
	
	\noindent \textbf{Case E} $(a_{4} - a_{3} =  a_{2} - a_{1} = 2a_{1}, a_{3} -a_{2} \neq 2a_{1}$ and $a_{5} - a_{4} \neq 2a_{1})$. In this case,
	\begin{multicols}{2}		
		\begin{itemize}
			\item [] $a_{2} = 3a_{1}$,
			\item [] $a_{4} =2a_{1} + a_{3} > 2a_{1} + a_{2} = 5a_{1}$,
			\item [] $\gamma_{2} = a_{1} + a_{2} + a_{3} - a_{4} = 2a_{1}$,
			\item [] $x_{1} = -a_{1} + a_{2} - a_{3} + a_{4} = 4a_{1}$,
			\item [] $x_{2} = a_{1} + a_{2} -a_{3} + a_{4} = 6a_{1}$,
			\item [] $x_{3} = a_{1} -a_{2} + a_{3} + a_{4} = -2a_{1} + a_{3} + a_{4}$,
			\item [] $x_{4} = -a_{1} + a_{2} + a_{3} + a_{4} = 2a_{1} + a_{3} + a_{4}$,
			\item [] $x_{5} = a_{1} + a_{2} + a_{3} + a_{4} = 4a_{1} + a_{3} + a_{4}$,
			\item [] $z_{5} = -a_{1} + a_{2} + a_{3} + a_{5} = 2a_{1} + a_{3} + a_{5}$,
			\item [] $x_{6} = a_{1} + a_{2} + a_{3} + a_{5} = 4a_{1} + a_{3} +a_{5}$,
			\item [] $x_{7} = a_{1} + a_{2} + a_{4} + a_{5} = 4a_{1} + a_{4} +a_{5}$,
			\item [] $\beta_{1} = -a_{1} - a_{2} + a_{3} + a_{4} = -2a_{1} + 2a_{3}$,
			\item []$z_{4} = a_{1} - a_{2} + a_{3} + a_{5} = -2a_{1} + a_{3} +a_{5}$,
			\item []$ y_{7} = -a_{1} + a_{3} + a_{4} + a_{5} = a_{1} + 2a_{3} + a_{5}.$
			
		\end{itemize}	
	\end{multicols}
	\noindent	Clearly,
	\begin{equation*}\label{L2-eqn-A_{1}}
		0  < \gamma_{2} < x_{1}, ~ x_{5} \neq z_{5}, ~ x_{7} \neq y_{7},
	\end{equation*}
	and
	\[ y_{7} = a_{1} + 2a_{3} + a_{5} > a_{1} + a_{2} +  a_{3}  + a_{5} = x_{6}.\]
	Therefore, we have
	\begin{equation}\label{Lemma 3.4 eq-6}
		0  < \gamma_{2} < x_{1} < x_{2} < x_{3} < x_{4} < \{x_{5}, z_{5}\} < x_{6} < \{x_{7}, y_{7}\} < x_{8} < x_{9}
	\end{equation}		
	and \[ x_{1} < \beta_{1} < x_{3} < z_{4} < z_{5}.\]
	We have $12$ distinct positive integers in (\ref{Lemma 3.4 eq-6}). Next, we show the existence of at least one more positive integer different from the elements listed  in (\ref{Lemma 3.4 eq-6}). Consider  the following cases:\\
	$\clubsuit$ 	If $\beta_{1} \neq x_{2}$, then we have at least 13 positive integers in $4_{\pm}^{\wedge}A$, listed below
	\[0  < \gamma_{2} < x_{1} < \{ x_{2}, \beta_{1} \} < x_{3} < x_{4} < \{x_{5}, z_{5}\} < x_{6} < \{x_{7}, y_{7}\} < x_{8} < x_{9}.\]
	$\clubsuit$ If  $z_{4} \neq x_{4}$ and $z_{4} \neq x_{5} $, then we have at least 13 positive integers in $4_{\pm}^{\wedge}A$, listed below
	\[0  < \gamma_{2} < x_{1} < x_{2} < x_{3} < \{x_{4}, x_{5}, z_{4}, z_{5}\} < x_{6} < \{x_{7}, y_{7}\} < x_{8} < x_{9}.\]
	$\clubsuit$  If $\beta_{1} = x_{2}$ and $z_{4} = x_{4}$, then $a_{3} = 4a_{1}, a_{4} = 6a_{1}$ and  $a_{5} = 10a_{1}$, which implies that $A = a_{1}\ast \{1,3,4,6,10\}$, and so $\left|4_{\pm}^{\wedge}A \right| \geq 26$.\\
	$\clubsuit$ If $\beta_{1} = x_{2}$ and $z_{4} = x_{5}$, then $a_{3} = 4a_{1}, a_{4} = 6a_{1}$ and  $a_{5} = 12a_{1}$, and so  $A = a_{1}\ast \{1,3,4,6,12\}$, and so $\left|4_{\pm}^{\wedge}A \right| \geq 26$. \\
	
	\noindent \textbf{Case F} 
	$(a_{3} - a_{2}  = a_{2} - a_{1}  = 2a_{1}$, $a_{5} - a_{4}  \neq  2a_{1} $ and  $a_{4} - a_{3} \neq 2a_{1})$. In this case,
	\begin{multicols}{2}
		\begin{itemize}
			\item [] $a_{2} = 3a_{1}$, $a_{3} = 5a_{1} < a_{4} \neq 7a_{1}$,
			\item [] $\gamma_{1} = -a_{1} + a_{2} + a_{3} - a_{4} = 7a_{1} - a_{4}$,
			\item [] $x_{1} = -a_{1} + a_{2} - a_{3} + a_{4} = -3a_{1} + a_{4}$,
			\item [] $x_{2} = a_{1} + a_{2} - a_{3} + a_{4} = -a_{1} + a_{4}$,
			
			\item [] $\beta_{1} = -a_{1} - a_{2} + a_{3} + a_{4} = a_{1} + a_{4}$,
			\item [] $x_{3} = a_{1} -a_{2} + a_{3} + a_{4} = 3a_{1} +  a_{4}$,
			\item [] $x_{4} = -a_{1} + a_{2} + a_{3} + a_{4} = 7a_{1} + a_{4}$,
			\item [] $x_{5} = a_{1} + a_{2} + a_{3} + a_{4} = 9a_{1} +a_{4}$,
			\item [] $z_{5} =  -a_{1} + a_{2} + a_{3} + a_{5} = 7a_{1} + a_{5}$,
			\item [] $x_{6} =  a_{1} + a_{2} + a_{3} + a_{5} = 9a_{1} + a_{5}$,
			\item [] $y_{6} = -a_{1} + a_{2} + a_{4} + a_{5} > 7a_{1} +a_{5}$,
			\item [] $x_{7} = a_{1} + a_{2} + a_{4} + a_{5} $,
			\item [] $\delta_{1} = a_{2} - a_{3} - a_{4} + a_{5}$.
			
		\end{itemize}
	\end{multicols}
	\noindent	Clearly,
	\begin{equation}\label{Lemma 3.4 eqn -6}
		0<\{\gamma_{1} \ \text{or} \ -\gamma_{1}\} < x_{1} < x_{2} < \beta_{1} < x_{3} < x_{4} < \{x_{5}, z_{5}\} < x_{6} < x_{7} < x_{8} < x_{9},
	\end{equation}
	and
	\begin{equation*}
		x_{4} < z_{5} < y_{6} < x_{7} \ \text{and} \ y_{6} \neq x_{6}.
	\end{equation*}
	In $(\ref{Lemma 3.4 eqn -6})$, we have $12$ distinct positive integers of $4_{\pm}^{\wedge}A$. Next, we show the existence of at least one more positive integer in $4_{\pm}^{\wedge}A$ different from the elements listed in $(\ref{Lemma 3.4 eqn -6})$. Consider the following cases:\\
	$\clubsuit$ If $y_{6} \neq x_{5}$, then we have the following list of  $13$ positive integers
	\[0<\{\gamma_{1} \ \text{or} \ -\gamma_{1}\} < x_{1} < x_{2} < \beta_{1} < x_{3} < x_{4} < \{x_{5}, z_{5}, y_{6}, x_{6}\} < x_{7} < x_{8} < x_{9}.\]
	$\clubsuit$ If $y_{6} = x_{5}$ and  $-\delta_{1} \neq \gamma_{1}$, then  $a_{5} = 7a_{1}, \gamma_{1} >0$ and $0 < -\delta_{1} = -a_{2} +a_{3} + a_{4} -a_{5} = -5a_{1} + a_{4} < x_{1}$. Thus we have the following list of  $13$ positive integers
	\[0<\{\gamma_{1}, -\delta_{1}\} < x_{1} < x_{2} < \beta_{1} < x_{3} < x_{4} < \{x_{5}, z_{5} \} < x_{6} < x_{7} < x_{8} < x_{9}.\]
	$\clubsuit$ If $y_{6} = x_{5}$ and  $-\delta_{1} = \gamma_{1}$, then  $a_{5} = 7a_{1}$ and $a_{4} = 6a_{1}$, which implies that $A = a_{1} \ast \{1,3,5,6,7\}$, and so $\left|4_{\pm}^{\wedge}A \right| \geq 26$.
\end{proof}

\begin{lemma}\label{main lemma 3}
	Let $A=\{a_{1}, a_{2}, a_{3}, a_{4}, a_{5}\}$ be a set of positive integers, where $a_{1} < a_{2} < a_{3} < a_{4} < a_{5}$. Let $a_{i} - a_{i - 1} = 2a_{1}$ for exactly one $i \in [2,5]$ and $a_{i} - a_{i-1} \neq 2a_{1}$ for the remaining $i \in [2,5]$.  Then $\left|4_{\pm}^{\wedge}A \right| \geq 26$.
\end{lemma}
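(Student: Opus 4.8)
The plan is to run the same template used in Lemmas \ref{main lemma 1} and \ref{main lemma 2}. Since $4^{\wedge}_{\pm}A$ is symmetric, it suffices to exhibit $13$ distinct positive elements of $4^{\wedge}_{\pm}A$; together with their negatives these force $\left|4^{\wedge}_{\pm}A\right| \geq 26$. The inequalities in (\ref{Inequalities}) already hand us the backbone $0 < x_{1} < x_{2} < \cdots < x_{9}$, nine distinct positive elements, so in each case I only have to manufacture four more. I would split the argument into four cases according to the unique index $i \in [2,5]$ for which $a_{i} - a_{i-1} = 2a_{1}$ (so that the other three consecutive differences are all $\neq 2a_{1}$): the fixed difference sits at position $2$, $3$, $4$, or $5$.

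In each case I would first substitute the single relation $a_{i} - a_{i-1} = 2a_{1}$ into the master list, simplifying the relevant $\gamma$'s, $\beta$'s, $\alpha$'s, $\delta$'s and $z$'s exactly as in the preceding lemmas. Using the comparisons recorded in (\ref{Inequalities}) I would then assemble an increasing chain of twelve positive elements by adjoining three suitable auxiliary elements to the backbone $x_{1}, \ldots, x_{9}$: typically the positive one(s) among $\{\gamma_{1}, -\gamma_{1}, \gamma_{2}\}$ below $x_{1}$ (the sign of $\gamma_{1}$ being decided by the position of the fixed difference), $\beta_{1}$ in the gap $(x_{1}, x_{3})$, and, when needed, a member of $\{z_{5}, y_{6}, y_{7}\}$ in an upper gap, invoking the strict inequalities $x_{6} \neq y_{6}$, $x_{7} \neq y_{7}$ that guarantee a genuinely new element.

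To obtain the thirteenth element I would select one further candidate from $\{\alpha_{1}, \alpha_{2}, \delta_{1}, \delta_{2}, z_{2}, z_{3}, \epsilon_{2}\}$ that, by (\ref{Inequalities}), lies strictly between two consecutive members of the twelve-element chain. If it is distinct from both neighbours we are done. Otherwise each possible coincidence is a single linear relation among the $a_{j}$; combined with the three hypotheses $a_{j} - a_{j-1} \neq 2a_{1}$ it either becomes inconsistent (so the collision cannot occur and the candidate is automatically the thirteenth element) or else pins $A$ down to an explicit dilate $a_{1} \ast S$ for a concrete five-element pattern $S$ (necessarily different from $\{1,3,5,7,9\}$, since that needs all four differences equal to $2a_{1}$). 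For each such $S$ the bound $\left|4^{\wedge}_{\pm}A\right| \geq 26$ is verified directly, using dilation invariance to set $a_{1} = 1$.

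The main obstacle is the case-by-case bookkeeping of these coincidences rather than any single hard idea. Because only one of the four differences is pinned here, most auxiliary elements are generically distinct and the twelve-element chain is easy to build; the real work lies in checking, for each of the four positions of the fixed difference, that the chosen thirteenth candidate actually falls into an available gap of the substituted chain, and in enumerating the finitely many collision patterns, each of which either relocates a thirteenth element elsewhere in the chain or collapses $A$ to an explicit set settled by a direct computation. This is lighter than Lemma \ref{main lemma 2}, where two fixed differences create more forced coincidences, but it still demands the same careful comparison of the substituted elements against (\ref{Inequalities}).
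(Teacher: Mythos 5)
Your plan coincides with the paper's own proof: reduce by symmetry to exhibiting $13$ distinct positive elements, split into the same four cases according to which consecutive difference equals $2a_{1}$, substitute that single relation to build a twelve-element increasing chain from the $x_{i}$ backbone together with auxiliary elements such as $\pm\gamma_{1}$, $\beta_{1}$, $\delta_{2}$, $z_{3}$, $z_{4}$, $z_{5}$, $y_{6}$, $y_{7}$, and then resolve each possible coincidence of the thirteenth candidate either by relocating another element into a gap or by collapsing $A$ to an explicit dilate that is checked directly. The only caveat is that this is an outline and the entire content of the lemma lives in the deferred bookkeeping (for instance, in the subcase $a_{4}-a_{3}=2a_{1}$ with $\beta_{1}=x_{2}$ the paper must abandon the original chain and write down a fresh thirteen-element chain, which your ``relocates a thirteenth element'' clause covers only implicitly), but the strategy is exactly the paper's and it does go through.
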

\begin{proof}
	It is sufficient to prove that there are $13$ positive integers in $4_{\pm}^{\wedge}A$. Consider the following cases.\\
	\noindent {\bf Case A}  $(a_{5} - a_{4} \neq 2a_{1} $,  $a_{4} - a_{3} \neq 2a_{1}$,  $a_{3} - a_{2} \neq 2a_{1}$ and  $a_{2} - a_{1}  = 2a_{1}$). Consider the following list of  integers which are elements of $4_{\pm}^{\wedge}A$
	\begin{multicols}{2}
		\begin{itemize}
			\item [] $\gamma_{1} = -a_{1} + a_{2} + a_{3} - a_{4} $,
			\item [] $x_{1} = -a_{1} + a_{2} - a_{3} + a_{4}$,
			\item [] $x_{2} = a_{1} + a_{2} - a_{3} + a_{4}$,
			\item [] $\delta_{2} = -a_{1} + a_{2} - a_{3} + a_{5}$,
			\item [] $z_{3} = a_{1} + a_{2} - a_{3} + a_{5}$,
			\item [] $z_{4} = a_{1} - a_{2} + a_{3} + a_{5}$,
			\item [] $z_{5} = -a_{1} + a_{2} + a_{3} + a_{5}$,
			\item [] $x_{6} = a_{1} + a_{2} + a_{3} + a_{5}$,
			\item [] $y_{6} = -a_{1} + a_{2} + a_{4} + a_{5}$,
			\item [] $x_{7} = a_{1} + a_{2} + a_{4} + a_{5}$,
			\item [] $x_{8} = a_{1} + a_{3} + a_{4} + a_{5}$,
			\item [] $x_{9} = a_{2} + a_{3} + a_{4} + a_{5}$,
			\item [] $y_{7} = -a_{1} + a_{3} + a_{4} + a_{5}$,
			\item [] $\gamma_{2} = a_{1} + a_{2} + a_{3} - a_{4}$,
			\item [] $x_{5} = a_{1} + a_{2} + a_{3} + a_{4}$,
			\item [] $y_{5} = a_{1} - a_{2} + a_{4} + a_{5}$,
			\item [] $y_{2} = -a_{1} + a_{2} - a_{4} + a_{5}$.
		\end{itemize}
	\end{multicols}
	\noindent Note that
	\begin{equation}\label{Lemma 3.5 Eqn-1}
		0<\{\gamma_{1} \ \text{or} \ -\gamma_{1}\} < x_{1} < \{x_{2}, \delta_{2}\} < z_{3} < z_{4} < z_{5} < \{x_{6}, y_{6}\} < x_{7} < x_{8} < x_{9}.
	\end{equation} 
	Also
	\[z_{5} < y_{6} < y_{7} < x_{8}, \  y_{7} \neq x_{7} \ \text{and} \ z_{4} < y_{5} < y_{6} < x_{7}.\]
	Therefore, we have at least $12$ distinct positive integers in $4_{\pm}^{\wedge}A$ listed in $(\ref{Lemma 3.5 Eqn-1})$. Next, we show the existence of at least one more positive integer in $4_{\pm}^{\wedge}A$ different from the elements listed in $(\ref{Lemma 3.5 Eqn-1})$. Consider the following cases:\\   
	$\clubsuit$ If $y_{7} \neq x_{6}$, then we have a list of $13$ distinct positive integers of $4_{\pm}^{\wedge}A$
	\[0<\{\gamma_{1} \ \text{or} \ -\gamma_{1}\} < x_{1} < \{x_{2}, \delta_{2}\} < z_{3} < z_{4} < z_{5} < \{x_{6}, y_{6}, x_{7}, y_{7}\} < x_{8} < x_{9}.\]
	$\clubsuit$ If $y_{7} = x_{6}$, then $a_{4} = 5a_{1}$ and $y_{5} <  z_{5}$. Therefore, we have
	\[0<\{\gamma_{1} \ \text{or} \ -\gamma_{1}\} < x_{1} < \{x_{2}, \delta_{2}\} < z_{3} < z_{4} <  y_{5} < z_{5} < \{x_{6}, y_{6}\} < x_{7} < x_{8} < x_{9}.\]
	Thus, we have at least 26 elements in $4_{\pm}^{\wedge}A$ in this case.\\
	
	\noindent {\bf Case B}  
	$(a_{5} - a_{4}  \neq 2a_{1}$, $a_{4} - a_{3}\neq2a_{1}$, $a_{3} - a_{2} =  2a_{1} $ and  $a_{2} - a_{1}  \neq  2a_{1})$.
	Consider the following list of integers which are elements of $4_{\pm}^{\wedge}A$
	\begin{multicols}{2}
		\begin{itemize}
			\item [] $x_{1} = -a_{1} + a_{2} - a_{3} + a_{4}$,
			\item [] $x_{2} = a_{1} + a_{2} - a_{3} + a_{4}$,
			\item [] $\beta_{1} = -a_{1} - a_{2} + a_{3} + a_{4}$,
			\item [] $z_{3} = a_{1} + a_{2} - a_{3} + a_{5}$,
			\item [] $\beta_{2} = -a_{1} - a_{2} + a_{3} + a_{5}$,
			\item [] $y_{4} = -a_{1} - a_{2} + a_{4} + a_{5}$,
			\item [] $z_{4} = a_{1} - a_{2} + a_{3} + a_{5}$,
			\item [] $y_{5} = a_{1} - a_{2} + a_{4} + a_{5}$,
			\item [] $y_{6} = -a_{1} + a_{2} + a_{4}  + a_{5}$,
			\item [] $x_{7} = a_{1} + a_{2} + a_{4} + a_{5}$,
			\item [] $x_{8} = a_{1} + a_{3} + a_{4} + a_{5}$,
			\item [] $x_{9} = a_{2} + a_{3} + a_{4} + a_{5}$,
			\item [] $\delta_{2} = -a_{1} + a_{2} - a_{3} + a_{5}$,
			\item [] $\delta_{1} = a_{2} - a_{3} - a_{4} + a_{5}$,
			\item [] $y_{2} = -a_{1} + a_{2} - a_{4} + a_{5}$.
		\end{itemize}
	\end{multicols}
	\noindent	Note that
	\begin{equation}\label{Lemma 3.5 Eqn-2}
		0 < x_{1} < x_{2} < \{\beta_{1}, z_{3}\} < \beta_{2} < \{y_{4}, z_{4}\} < y_{5} < y_{6} < x_{7} < x_{8} < x_{9}.
	\end{equation}
	Also
	\[x_{1} < \delta_{2} < z_{3} < \beta_{2}, ~ \delta_{1} < y_{2} < \delta_{2} < z_{3} < \beta_{2} \ \text{and} \ \delta_{2} \neq x_{2}.\]
	In $(\ref{Lemma 3.5 Eqn-2})$, we have $12$ distinct positive integers of $4_{\pm}^{\wedge}A$. Next, we show the existence of at least one more positive integer in $4_{\pm}^{\wedge}A$ different from the elements listed in $(\ref{Lemma 3.5 Eqn-2})$. Consider the following cases:\\
	$\clubsuit$ If $\delta_{2} \neq \beta_{1}$, then we have at least $13$ positive integers in $4_{\pm}^{\wedge}A$, listed below
	\[0 < x_{1} < \{x_{2}, \beta_{1}, z_{3}, \delta_{2}\} < \beta_{2} < \{y_{4}, z_{4}\} < y_{5} < y_{6} < x_{7} < x_{8} < x_{9}.\]
	$\clubsuit$ If $\delta_{2} = \beta_{1}$ and $\delta_{1} \neq x_{1}$, then $a_{5} - a_{4} = 2a_{3} - 2a_{2} = 4a_{1}$ and $\delta_{1} = 2a_{1} < x_{2}$. Therefore, we have at least $13$ positive integers in  $4_{\pm}^{\wedge}A$, listed below
	\[0 < \{x_{1}, \delta_{1}\} < x_{2} < \delta_{2}< z_{3} < \beta_{2} < \{y_{4}, z_{4}\} < y_{5} < y_{6} < x_{7} < x_{8} < x_{9}.\]
	$\clubsuit$ If $\delta_{2} = \beta_{1}$, and  $\delta_{1} = x_{1}$, then  $a_{4} =  5a_{1}$ and $a_{5} = 9a_{1}$. Consequently $x_{2} < y_{2} < z_{3}$, and so
	\[\delta_{1} = x_{1} < x_{2} < y_{2} < \delta_{2} = \beta_{1} < z_{3} <\beta_{2}  < \{y_{4}, z_{4}\} < y_{5} < y_{6} < x_{7} <  x_{8} < x_{9} .\]
	Thus, we have at least $26$ elements in $4_{\pm}^{\wedge}A$ in this case also.\\
	
	\noindent {\bf Case C} $(a_{5} - a_{4} \neq 2a_{1} $, $a_{4} - a_{3} = 2a_{1}$,  $a_{3} - a_{2} \neq 2a_{1}$ and  $a_{2} - a_{1} \neq 2a_{1})$. Consider the following list of positive integers which are elements of $4_{\pm}^{\wedge}A$
	\begin{multicols}{2}
		\begin{itemize}
			\item [] $\gamma_{1} = -a_{1} + a_{2} + a_{3} - a_{4}$,
			
			\item [] $x_{1} = -a_{1} + a_{2} - a_{3} + a_{4}$,
			\item [] $x_{2} = a_{1} + a_{2} - a_{3} + a_{4}$,
			\item [] $\beta_{1} = -a_{1} - a_{2} + a_{3} + a_{4}$,
			\item [] $x_{3} = a_{1} - a_{2} + a_{3} + a_{4}$,
			\item [] $x_{4} = -a_{1} + a_{2} + a_{3} + a_{4}$,
			\item [] $x_{5} = a_{1} + a_{2} + a_{3} + a_{4}$,
			\item [] $z_{5} = -a_{1} + a_{2} + a_{3} + a_{5}$,
			\item [] $x_{6} = a_{1} + a_{2} + a_{3} + a_{5}$,
			\item [] $x_{7} = a_{1} + a_{2} + a_{4} + a_{5}$,
			\item [] $y_{7} = -a_{1} + a_{3} + a_{4} + a_{5}$,
			\item [] $x_{8} = a_{1} + a_{3} + a_{4} + a_{5}$,
			\item [] $x_{9} = a_{2} + a_{3} + a_{4} + a_{5}$.
		\end{itemize}
	\end{multicols}
	\noindent	Clearly, $$x_{1}<\beta_{1}<x_{3}, \ x_{4} < z_{5} < x_{6}, \ x_{5} \neq z_{5}, \ x_{7} \neq y_{7}, \ y_{7} < x_{8}.$$ Since $a_{4} -a_{2} > a_{4} - a_{3} = 2a_{1}$, we have $x_{6} < y_{7}$. Therefore from (\ref{Inequalities}), we have at least $12$ distinct positive integers  in $4_{\pm}^{\wedge}A$, listed below
	\begin{equation}\label{Lemma 3.5 Eqn-3}
		0<\{\gamma_{1} \ \text{or} \ -\gamma_{1} \} < x_{1} < x_{2} < x_{3} < x_{4} < \{x_{5}, z_{5} \} < x_{6} < \{x_{7}, y_{7} \} < x_{8} < x_{9}.
	\end{equation}
	Next, we show the existence of at least one more positive integer in $4_{\pm}^{\wedge}A$ different from the elements listed in $(\ref{Lemma 3.5 Eqn-3})$. Consider the following cases:\\
	$\clubsuit$ If $x_{2} \neq \beta_{1}$, then we have at least $13$ positive integers, listed below
	\[0<\{\gamma_{1} \ \text{or} \ -\gamma_{1} \} < x_{1} < \{ x_{2}, \beta_{1} \} < x_{3} < x_{4} < \{x_{5}, z_{5} \} < x_{6} < \{x_{7}, y_{7} \} < x_{8} < x_{9}.\]
	$\clubsuit$ If $x_{2} = \beta_{1}$, then $a_{3} - a_{2} = a_{1}$. Therefore, $a_{5} > a_{4} = a_{3} +2a_{1} = a_{2} + 3a_{1} > 4a_{1} $.  Consider the following increasing sequence of  elements of  $4_{\pm}^{\wedge}A$
	\begin{multline*}
		0 < -4a_{1} + a_{5} = -a_{1} + a_{2} - a_{4} + a_{5} < -a_{1} + a_{3} - a_{4} + a_{5} < -a_{1} + a_{2} - a_{3} + a_{5} \\< a_{1} + a_{3} - a_{4} + a_{5} < -a_{1} - a_{2} + a_{3} + a_{5} < -a_{1} - a_{3} + a_{4} + a_{5} < a_{1} - a_{2} + a_{3} + a_{5} \\ < a_{1} - a_{3} + a_{4} + a_{5} < a_{1} - a_{2} + a_{4} + a_{5} < -a_{1} + a_{2} + a_{4} + a_{5} < a_{1} +a_{2} +a_{4} + a_{5} \\< a_{1} + a_{3} + a_{4} + a_{5} < a_{2} + a_{3} + a_{4} + a_{5}.
	\end{multline*}
	Thus, we have at least 26 elements in $4_{\pm}^{\wedge}A$ in this case also.\\	
	
	\noindent {\bf Case D}  $(a_{5} - a_{4} = 2a_{1} $, $a_{4} - a_{3} \neq 2a_{1}$,  $a_{3} - a_{2} \neq 2a_{1}$ and  $a_{2} - a_{1} \neq 2a_{1})$.  	Consider the following list of positive integers which are elements of $4_{\pm}^{\wedge}A$
	
	\begin{multicols}{2}
		\begin{itemize}
			\item [] $z_{1} = a_{1} + a_{2} + a_{4} - a_{5}$,
			\item [] $x_{1} = -a_{1} + a_{2} - a_{3} + a_{4}$,
			\item [] $y_{2} = -a_{1} + a_{2} - a_{4} + a_{5}$,
			\item [] $\delta_{2} = -a_{1} + a_{2} - a_{3} + a_{5}$,
			\item [] $\beta_{2} = -a_{1} - a_{2} + a_{3} + a_{5}$,
			\item [] $y_{4} = -a_{1} - a_{2} + a_{4} + a_{5}$,
			\item [] $z_{4} = a_{1} - a_{2} + a_{3} + a_{5}$,
			\item []  $y_{5} = a_{1} - a_{2} + a_{4} +  a_{5}$,
			\item [] $y_{6} = -a_{1} + a_{2} + a_{4} + a_{5}$,
			\item  [] $ x_{7} = a_{1} + a_{2} + a_{4} + a_{5}$,
			\item [] $y_{7} = -a_{1} + a_{3} + a_{4} + a_{5}$,
			\item [] $x_{8} = a_{1} + a_{3} + a_{4} + a_{5}$,
			\item [] $x_{9} = a_{2} + a_{3} + a_{4} + a_{5}$.
		\end{itemize}
	\end{multicols}
	\noindent Note that $0<z_{1} < \{x_{1}, y_{2}\} < \delta_{2} < \beta_{2} < \{y_{4}, z_{4}\} < y_{5} < y_{6} < \{x_{7}, y_{7}\} < x_{8} < x_{9}$.
	Thus, we have at least 26 elements in $4_{\pm}^{\wedge}A$ in this case. 	
\end{proof}

\begin{lemma}\label{main lemma 4(a)}
	Let $A = \{a_{1}, a_{2}, a_{3}, a_{4}, a_{5}\}$ be a set of positive integers, where $a_{1} < a_{2} < a_{3} < a_{4} < a_{5}$ with $a_{i}  - a_{i-1} \neq 2a_{1}$ for all $i \in [2,5]$ and $a_{4} - a_{3} \neq a_{2} - a_{1}$. Then $\left|4_{\pm}^{\wedge}A \right| \geq 26$.
\end{lemma}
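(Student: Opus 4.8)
The plan is to follow the strategy of Lemmas \ref{main lemma 1}--\ref{main lemma 3}: since $4^{\wedge}_{\pm}A$ is symmetric (if $s \in 4^{\wedge}_{\pm}A$ then $-s \in 4^{\wedge}_{\pm}A$), it suffices to exhibit $13$ distinct \emph{positive} integers in $4^{\wedge}_{\pm}A$, whence $|4^{\wedge}_{\pm}A| \geq 26$. The nine elements $x_{1} < x_{2} < \cdots < x_{9}$ are positive and distinct by (\ref{Inequalities}), giving nine of them for free. The first use of the hypotheses is the observation that $\gamma_{1} = (a_{2} - a_{1}) - (a_{4} - a_{3})$, so the assumption $a_{4} - a_{3} \neq a_{2} - a_{1}$ is exactly $\gamma_{1} \neq 0$; hence one of $\pm\gamma_{1}$ is positive, and by (\ref{Inequalities}) it lies below $x_{1}$, so it is distinct from every $x_{i}$ and is a tenth element.

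Next I would harvest $z_{5}, y_{6}, y_{7}$, which by (\ref{Inequalities}) satisfy $x_{4} < z_{5} < x_{6}$, $x_{4} < y_{6} < x_{7}$, $x_{4} < y_{7} < x_{8}$ and $z_{5} < y_{6} < y_{7}$. The remaining three difference hypotheses enter here: $a_{5} - a_{4} \neq 2a_{1}$ gives $z_{5} \neq x_{5}$, so $z_{5}$ (being trapped strictly between $x_{4}$ and $x_{6}$) differs from every $x_{i}$ and is an eleventh element with no case work; likewise $a_{4} - a_{3} \neq 2a_{1}$ gives $y_{6} \neq x_{6}$ and $a_{3} - a_{2} \neq 2a_{1}$ gives $y_{7} \neq x_{7}$. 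Thus $y_{6}$ and $y_{7}$ are new unless one of them meets a \emph{smaller} $x_{i}$, and a short computation shows the only such coincidences are $y_{6} = x_{5} \Leftrightarrow a_{5} - a_{3} = 2a_{1}$, $y_{7} = x_{6} \Leftrightarrow a_{4} - a_{2} = 2a_{1}$, and $y_{7} = x_{5} \Leftrightarrow a_{5} - a_{2} = 2a_{1}$. If none holds, we already have thirteen positive integers and are finished.

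The remainder is the $\clubsuit$-style case analysis disposing of these coincidences, and this is where I expect the main difficulty. When a coincidence occurs I would fall back on reserve elements from the displayed list, the most useful being $\beta_{1}$ (which lies in $(x_{1},x_{3})$ and is new as soon as $a_{3} - a_{2} \neq a_{1}$, since $\beta_{1} = x_{2} \Leftrightarrow a_{3} - a_{2} = a_{1}$), together with $\alpha_{2} = -a_{2} - a_{3} + a_{4} + a_{5}$, $\delta_{2}$, $y_{3}$ and $z_{4}$; in each branch one checks that enough of these are positive, distinct from the eleven robust elements, and correctly placed by (\ref{Inequalities}) to restore the count to thirteen. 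The hard part is that several coincidences can occur at once and drain the primary reserves simultaneously: for instance $A = \{1,5,6,7,8\}$ satisfies all hypotheses yet forces $\beta_{1} = x_{2}$, $y_{6} = x_{5}$ and $y_{7} = x_{6}$ together, so the argument must descend to the secondary reserves $\alpha_{2}, \delta_{2}, y_{3}$. I would therefore organize the endgame according to how many of the equalities $a_{5} - a_{3} = 2a_{1}$, $a_{5} - a_{2} = 2a_{1}$, $a_{4} - a_{2} = 2a_{1}$, $a_{3} - a_{2} = a_{1}$ hold, and in the fully degenerate configurations verify that the accumulated constraints pin $A$ down up to dilation to an explicit small set whose value $|4^{\wedge}_{\pm}A| \geq 26$ is confirmed by direct inspection.
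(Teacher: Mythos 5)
Your plan follows the paper's own proof almost exactly: the paper likewise reduces by symmetry to exhibiting $13$ positive elements, takes $x_{1}<\cdots<x_{9}$ together with one of $\pm\gamma_{1}$ and $z_{5}$ as the robust core (using $\gamma_{1}\neq 0 \iff a_{4}-a_{3}\neq a_{2}-a_{1}$ and $z_{5}\neq x_{5}\iff a_{5}-a_{4}\neq 2a_{1}$), and then runs a $\clubsuit$-style case analysis on precisely the coincidences you isolate ($y_{6}=x_{5}$, $y_{7}\in\{x_{5},x_{6}\}$, and $\beta_{1}=x_{2}$, i.e.\ $a_{3}=a_{2}+a_{1}$, which is the paper's Case A/Case B split), drawing on the same reserves $\beta_{1},\delta_{2},\epsilon_{1},z_{3},z_{4},y_{5}$ and terminating the fully degenerate branches with explicit dilates such as $2\ast A=a_{1}\ast\{2,3,5,7,15\}$. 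The only difference is that you have not executed that final case analysis, which is the bulk of the paper's argument, but your outline --- including the witness $\{1,5,6,7,8\}$ for the worst configuration --- is accurate and carries through.
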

\begin{proof} 	 Consider the following list of  integers (not necessarily distinct)
	\begin{multicols}{2}
		\begin{itemize}
			\item [] $\gamma_{1} = -a_{1} + a_{2} + a_{3} - a_{4}$,
			
			\item [] $x_{1} = -a_{1} + a_{2} - a_{3} + a_{4}$,
			\item [] $x_{2}  = a_{1} + a_{2} - a_{3} + a_{4}$,
			\item  [] $\beta_{1} = -a_{1} - a_{2} + a_{3} + a_{4}$,
			\item [] $x_{3} = a_{1} - a_{2} + a_{3} + a_{4}$,
			\item [] $x_{4} = -a_{1} + a_{2} + a_{3} + a_{4}$,
			\item [] $x_{5} = a_{1} + a_{2} + a_{3} + a_{4}$,
			\item [] $z_{5} = -a_{1} + a_{2} + a_{3} + a_{5}$,
			\item [] $x_{6} = a_{1} + a_{2} + a_{3} + a_{5}$,
			\item [] $x_{7} = a_{1} + a_{2} + a_{4} + a_{5}$,
			\item [] $x_{8} = a_{1} + a_{3} + a_{4} + a_{5}$,
			\item [] $x_{9} = a_{2} + a_{3} + a_{4} + a_{5}$,
			\item [] $y_{5} = a_{1}-a_{2}+a_{4}+a_{5}$,
			\item [] $y_{6} = -a_{1} + a_{2} + a_{4} + a_{5}$,
			\item [] $y_{7} = -a_{1} + a_{3} + a_{4} + a_{5}$,
			\item [] $\epsilon_{1} = -a_{1} + a_{3} + a_{4} - a_{5}$,
			\item  [] $\delta_{2} = -a_{1} + a_{2} - a_{3} + a_{5}$,
			\item [] $z_{3} = a_{1}+a_{2}-a_{3}+a_{5}$.
		\end{itemize}
	\end{multicols}
	\noindent Since  $a_{i} -a_{i-1} \neq 2a_{1}$ for all $i \in [2,5]$, and $a_{4} - a_{3} \neq a_{2} - a_{1}$, we have
	\begin{equation}\label{L4-Eqn-1}
		y_{6} \neq x_{6},~ y_{7} \neq x_{7},~ x_{5} \neq  z_{5},~   x_{2} \neq \delta_{2} \ \text{and} \ \gamma_{1} \neq 0.
	\end{equation}
	Therefore from (\ref{Inequalities})  and (\ref{L4-Eqn-1}) we have,
	\begin{equation*}
		0 < \{\gamma_{1} \ \text{or} \ -\gamma_{1}\} < x_{1} < x_{2} < x_{3} <x_{4} < \{z_{5}, x_{5}\} <x_{6} <x_{7} < x_{8} <x_{9}.
	\end{equation*}
	Also
	\begin{equation*}
		x_{4} < z_{5} < y_{6} < x_{7}, ~x_{4} < z_{5} < y_{7} < x_{8}, ~x_{1} < \beta_{1} < x_{3} \ \text{and} \ x_{1} < \delta_{2}.
	\end{equation*}
	Consider the following cases:
	
	\noindent {\bf Case A}  $\boldsymbol{(a_{3} \neq a_{2} + a_{1})}.$ In this case, $x_{2} = a_{1} + a_{2} -a _{3} + a_{4} \neq -a_{1} - a_{2}  + a_{3} + a_{4} = \beta_{1}$. Now, consider the following cases.\\
	$\clubsuit$ If $y_{6} \neq x_{5}$, then we have at least $13$ positive integers in $4^{\wedge}_{\pm}A$, listed below
	\begin{equation*}
		0 < \{\gamma_{1} \ \text{or} \ -\gamma_{1}\} < x_{1} < \{ x_{2}, \beta_{1} \} < x_{3} <x_{4} < \{z_{5}, x_{5}, y_{6}, x_{6} \} < x_{7} < x_{8} < x_{9}.
	\end{equation*}
	$\clubsuit$	If $y_{6}=x_{5}$ and $y_{7} \neq x_{6}$, then  we have at least $13$ positive integers in $4^{\wedge}_{\pm}A$, listed below
	\begin{equation*}
		0 < \{\gamma_{1} \ \text{or} \ -\gamma_{1}\} < x_{1} < \{ x_{2}, \beta_{1} \} < x_{3} < x_{4} < \{z_{5}, y_{6}, y_{7}, x_{6} \} < x_{7} < x_{8} < x_{9}.
	\end{equation*}
	$\clubsuit$  If $y_{6} = x_{5}$ and $y_{7} = x_{6}$, then  $a_{5} - a_{3} = 2a_{1}$ and $a_{4} - a_{2} = 2a_{1}$. Therefore,
	\[\epsilon_{1} = -a_{1} + a_{3} + a_{4} -a_{5} = -a_{1} + a_{4} -2a_{1} = - a_{1} + a_{2} < x_{1},\]
	\[ x_{1} < \delta_{2} = -a_{1} + a_{2} - a_{3} + a_{5} = a_{1} + a_{2} < a_{1} + a_{2} -a_{3} + a_{4}  = x_{2},\]
	and
	\[\beta_{1} = -a_{1} - a_{2} + a_{3} + a_{4} = a_{1} + a_{3} > a_{1} +a_{2} = \delta_{2}.\]
	It follows that
	\begin{equation*}
		0 < \epsilon_{1} <  x_{1} < \delta_{2} < \{ x_{2}, \beta_{1} \} < x_{3} < x_{4} < \{z_{5}, x_{5}\} < x_{6} < x_{7} < x_{8} < x_{9}.
	\end{equation*}
	Therefore, in each case, we have at least $13$ positive integers in $4^{\wedge}_{\pm}A$.\\

	\noindent {\bf Case B}  $\boldsymbol{(a_{3} = a_{2} + a_{1})}$. From  (\ref{Inequalities}) and (\ref{L4-Eqn-1}), we have
	\begin{equation*}
		0 < \{\gamma_{1} \ \text{or} \ -\gamma_{1}\} < x_{1} < \{ x_{2}, \delta_{2} \} < z_{3} < z_{4} < z_{5} < \{x_{6},  y_{6} \} < x_{7} < x_{8} < x_{9}.
	\end{equation*}
	Also
	\begin{equation*}
		x_{2} < x_{3} <	z_{4} < y_{5} < y_{6} < x_{7}, ~z_{5} < y_{6} < y_{7} < x_{8}  \ \text{and} \ y_{7} \neq x_{7}.
	\end{equation*}
	Observe the following.
	\begin{enumerate}
		\item If $y_{7} = x_{6}$, then $a_{4} - a_{2} = 2a_{1}$. So $y_{5} < x_{6}$.
		
		\item If $y_{7} = x_{6}$ and $y_{5} = z_{5}$, then  $a_{4} -a_{2} = 2a_{1}, a_{4} - a_{3} = a_{1}$ and $a_{4} -a_{3} = 2a_{2} -2a_{1}$. This gives $2a_{2} = 3a_{1}, 2a_{3} = 5a_{1}, 2a_{4} = 7a_{1}$ and $x_{3} \neq z_{3}$.
	\end{enumerate}
	Now, consider the following cases:\\
	$\clubsuit$ If $y_{7} \neq x_{6}$, then we have at least $13$ positive integers in $4^{\wedge}_{\pm}A$, listed below
	\begin{equation*}
		0 < \{\gamma_{1} \ \text{or} \ -\gamma_{1}\} < x_{1} < \{ x_{2}, \delta_{2} \} < z_{3} <z_{4} < z_{5} < \{x_{6},  y_{6}, x_{7}, y_{7} \} < x_{8} < x_{9}.
	\end{equation*}
	$\clubsuit$ If $y_{7} = x_{6}$ and $y_{5} \neq z_{5}$, then we have at least $13$ positive integers in $4^{\wedge}_{\pm}A$, listed below
	\begin{equation*}
		0 < \{\gamma_{1} \ \text{or} \ -\gamma_{1}\} < x_{1} < \{ x_{2}, \delta_{2} \} < z_{3} < z_{4} < \{z_{5}, y_{5},  y_{6}, x_{6}, x_{7} \} < x_{8} < x_{9}.
	\end{equation*}
	$\clubsuit$ If $y_{7} = x_{6}, y_{5} = z_{5}$ and $x_{3} \neq \delta_{2}$, then we have at least $13$ positive integers in $4^{\wedge}_{\pm}A$, listed below
	\begin{equation*}
		0 < \{\gamma_{1} \ \text{or} \ -\gamma_{1}\} < x_{1} < \{ x_{2}, \delta_{2}, z_{3}, x_{3} \} < z_{4} < z_{5} < \{ x_{6}, y_{6} \} < x_{7} < x_{8} < x_{9}.
	\end{equation*}
	$\clubsuit$ If $y_{7} = x_{6}, y_{5} = z_{5}$ and $x_{3} = \delta_{2}$, then  $2a_{2} = 3a_{1}, 2a_{3} = 5a_{1}, 2a_{4} = 7a_{1}$, and  $2a_{5} = 15a_{1}$. In this case, $2 \ast A = a_{1} \ast \{2,3,5,7,15\}$, and so $|4^{\wedge}_{\pm}A| = |4^{\wedge}_{\pm}(2 \ast A)| \geq 26$.\\
	Therefore, in each of these cases we have at least $13$ positive integers in $4^{\wedge}_{\pm}A$.
\end{proof}

\begin{lemma}\label{main lemma 4(b)}
	Let $A = \{a_{1}, a_{2}, a_{3}, a_{4}, a_{5}\}$ be a set of positive integers, where $a_{1} < a_{2} < a_{3} < a_{4} < a_{5}$ with $a_{4} - a_{3} = a_{2} - a_{1}$, $a_{5} - a_{3} \neq 2a_{1}$  and $a_{i}  - a_{i-1} \neq 2a_{1}$ for all $i \in [2,5]$. Then $\left|4_{\pm}^{\wedge}A \right| \geq 26$.
\end{lemma}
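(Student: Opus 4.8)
By the symmetry of the signed sumset, it suffices to produce $13$ distinct positive elements of $4^{\wedge}_{\pm}A$, since their negatives then furnish $13$ distinct negative elements and hence $\left|4^{\wedge}_{\pm}A\right|\geq 26$. The governing feature of this case is the hypothesis $a_4-a_3=a_2-a_1$, which forces $\gamma_1=-a_1+a_2+a_3-a_4=0$ and $\gamma_2=a_1+a_2+a_3-a_4=2a_1$. Thus the ``free'' positive element $|\gamma_1|$ that drove the count in Lemma \ref{main lemma 4(a)} is no longer available, and the whole point of the extra hypothesis $a_5-a_3\neq 2a_1$ is to supply a substitute: it guarantees $y_6\neq x_5$, since $y_6-x_5=(a_5-a_3)-2a_1$.

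First I would record, exactly as in the earlier lemmas, the distinctness relations that follow directly from the standing hypotheses $a_i-a_{i-1}\neq 2a_1$ for $i\in[2,5]$: namely $z_5\neq x_5$, $y_6\neq x_6$, $y_7\neq x_7$ and $\delta_2\neq x_2$, each arising from a difference of the form $d-2a_1$ with $d\in\{a_5-a_4,\,a_4-a_3,\,a_3-a_2\}$. Combining these with $y_6\neq x_5$ above and the orderings in (\ref{Inequalities}), the chain $x_1<x_2<\cdots<x_9$ together with $z_5$ (lying strictly between $x_4$ and $x_6$ but distinct from $x_5$) and $y_6$ (lying between $x_4$ and $x_7$, above $z_5$, and distinct from $x_5$ and $x_6$) already yields $11$ distinct positive elements.

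The plan is then to split on whether $a_3=a_1+a_2$. If $a_3\neq a_1+a_2$, then $\beta_1\neq x_2$, so $\beta_1$ (squeezed between $x_1$ and $x_3$) is a $12$th element; the $13$th is $y_7$, which lies between $x_4$ and $x_8$, exceeds $y_6$, and is distinct from $x_7$. The coincidences not ruled out by the standing hypotheses (for instance $y_7=x_6$, equivalent to $a_4-a_2=2a_1$, or $y_7=x_5$, equivalent to $a_5-a_2=2a_1$) would be treated as degenerate subcases, in each of which I would instead exhibit a replacement positive element, such as $\gamma_2=2a_1$ or $\delta_2$, and verify its position against the listed elements. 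If instead $a_3=a_1+a_2$, then $\beta_1=x_2$ collapses, and I would reorganise exactly as in Case B of Lemma \ref{main lemma 4(a)}, replacing the upper part of the $x$-chain by $\delta_2<z_3<z_4<z_5$ (using $x_1<\delta_2<z_3$ and $\delta_2\neq x_2$ from (\ref{Inequalities})) and adjoining $y_6$ and $y_7$ to reach $13$.

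The main obstacle is precisely the bookkeeping forced by the loss of $\gamma_1$: every place where Lemma \ref{main lemma 4(a)} could spend the free element $|\gamma_1|$ must now be balanced by a genuinely new positive element, so in each branch one must check that the candidate substitute ($y_6$ in the generic situation, or $\gamma_2$/$\delta_2$ in the degenerate subcases $a_4-a_2=2a_1$, $a_5-a_2=2a_1$ and $a_3=a_1+a_2$) is distinct from all elements already listed. I expect that the residual coincidences pin $A$ down to finitely many explicit dilated sets $a_1\ast\{\,\cdots\,\}$, each of which can be checked directly to satisfy $\left|4^{\wedge}_{\pm}A\right|\geq 26$, exactly as in the endgames of Lemmas \ref{main lemma 1}--\ref{main lemma 4(a)}.
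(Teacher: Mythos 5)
Your overall strategy is the right one and closely parallels the paper's: reduce to exhibiting $13$ distinct positive elements, start from the $11$-element base list $x_1<\cdots<x_4<\{x_5,z_5,x_6,y_6\}<x_7<x_8<x_9$ (your distinctness checks $z_5\neq x_5$, $y_6\neq x_5$, $y_6\neq x_6$, $y_7\neq x_7$, $\delta_2\neq x_2$ are all correct, and your observation that $a_5-a_3\neq 2a_1$ exists precisely to force $y_6\neq x_5$ is exactly the point), and then hunt for two more elements among $\beta_1,\gamma_2,y_7,\delta_2,z_3,z_4$. The paper organizes the hunt around the pair $(\gamma_2,\beta_1)$, splitting on whether $a_4=a_1+a_2$ (i.e.\ $\gamma_2=\beta_1$) and then on the coincidences $\gamma_2=x_1$, $\beta_1=x_2$; you split instead on $a_3=a_1+a_2$ (i.e.\ $\beta_1=x_2$) and lean on $y_7$ as the default $13$th element. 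Either organization can work, but as written your proposal has two genuine gaps.

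First, your branch $a_3=a_1+a_2$ does not reach $13$. You propose to ``reorganise exactly as in Case B of Lemma \ref{main lemma 4(a)}'', but that case's count of $12$ base elements begins with $\{\gamma_1 \ \text{or}\ -\gamma_1\}$, and here $\gamma_1=(a_2-a_1)-(a_4-a_3)=0$, so that element is gone; moreover $y_6$ is already one of your $11$, so ``adjoining $y_6$ and $y_7$'' adds only one new element. The chain $x_1<\{x_2,\delta_2\}<z_3<z_4<z_5<\{x_6,y_6\}<x_7<x_8<x_9$ together with $y_7$ gives at most $12$ elements; you still need a further element (the paper recovers it from $\gamma_2$, $y_5$, $x_3$ or $z_3$ after more subcases). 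Second, in the branch $a_3\neq a_1+a_2$ your named substitutes for the degenerate coincidences are not sufficient as stated: for $A=\{3,6,7,10,12\}$ (which satisfies every hypothesis of the lemma, has $a_3\neq a_1+a_2$, and has $y_7=x_5$ since $a_5-a_2=2a_1$) one finds $\gamma_2=6=x_1$ and $\delta_2=8=\beta_1$, so both of your proposed replacements collide with elements already in your list of $12$; here one must reach for yet another element (the paper uses $y_5=19$ for exactly this configuration). Your fallback --- that the accumulated coincidences pin $A$ down to finitely many dilated sets which can be checked directly --- is plausible and does rescue this particular example, but the enumeration of which coincidences can co-occur, and the verification for each resulting set, is precisely the content of the proof and is left undone.
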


\begin{proof}
	Consider the following list of positive integers (not necessarily distinct) of $4_{\pm}^{\wedge}A$
	\begin{multicols}{2}
		\begin{itemize}
			\item [] $\gamma_{1} = -a_{1} + a_{2} + a_{3} - a_{4}$,
			
			\item [] $x_{1} = -a_{1} + a_{2} - a_{3} + a_{4}$,
			\item [] $x_{2}  = a_{1} + a_{2} - a_{3} + a_{4}$,
			\item  [] $\beta_{1} = -a_{1} - a_{2} + a_{3} + a_{4}$,
			\item [] $x_{3} = a_{1} - a_{2} + a_{3} + a_{4}$,
			\item [] $x_{4} = -a_{1} + a_{2} + a_{3} + a_{4}$,
			\item [] $x_{5} = a_{1} + a_{2} + a_{3} + a_{4}$,
			\item [] $z_{5} = -a_{1} + a_{2} + a_{3} + a_{5}$,
			\item [] $x_{6} = a_{1} + a_{2} + a_{3} + a_{5}$,
			\item [] $x_{7} = a_{1} + a_{2} + a_{4} + a_{5}$,
			\item [] $x_{8} = a_{1} + a_{3} + a_{4} + a_{5}$,
			\item [] $x_{9} = a_{2} + a_{3} + a_{4} + a_{5}$,
			\item [] $y_{6} = -a_{1} + a_{2} + a_{4} + a_{5}$,
			\item [] $y_{7} = -a_{1} + a_{3} + a_{4} + a_{5}$,
			\item [] $\gamma_{2} = a_{1} + a_{2} + a_{3} - a_{4}$,
			\item [] $z_{4} = a_{1} - a_{2} + a_{3} + a_{5}$,	
			\item [] $y_{4} = -a_{1} - a_{2} + a_{4} + a_{5}$,
			\item [] $y_{5} = a_{1} - a_{2} + a_{4} + a_{5}$,
			\item [] $z_{3} = a_{1}+a_{2}-a_{3}+a_{5}$.
		\end{itemize}
	\end{multicols}
	\noindent	Since  $a_{5} - a_{3} \neq 2a_{1}$  and $a_{i}  - a_{i-1} \neq 2a_{1}$ for all $i \in [2,5]$, we have
	\begin{equation}
		x_{5}  \neq  y_{6}, ~y_{6} \neq x_{6}, ~y_{7} \neq x_{7}, ~ \text{and} ~x_{5} \neq  z_{5}.
	\end{equation}
	Therefore, from (\ref{Inequalities}) we have
	\begin{equation}\label{Lemma 3.7 eq-1}
		0 < x_{1} < x_{2} < x_{3} < x_{4} < \{x_{5},  z_{5}, x_{6}, y_{6} \} < x_{7} < x_{8} < x_{9}.
	\end{equation}
	
	\noindent {\bf Case A}  $\boldsymbol{(a_{4} \neq a_{2} + a_{1})}.$ In this case, $\gamma_{2}  \neq  \beta_{1}$. Since $0 < \gamma_{2} < x_{2}$ and  $x_{1} < \beta_{1} < x_{3}$, we have the following cases:
	
	\noindent $\clubsuit$ If $\gamma_{2} \neq x_{1}$ and $\beta_{1} \neq x_{2}$, then we can add $\beta_{1}$ and $\gamma_{2}$ in the list (\ref{Lemma 3.7 eq-1}) to get $13$ positive integers in $4_{\pm}^{\wedge}A$, listed below
	\begin{equation*}
		0 < \{x_{1}, x_{2}, \beta_{1}, \gamma_{2}\} < x_{3} < x_{4} < \{x_{5},  z_{5}, x_{6}, y_{6} \} < x_{7} < x_{8} < x_{9}.
	\end{equation*}

	\noindent $\clubsuit$ If $\gamma_{2} = x_{1}$ and $\beta_{1} = x_{2}$, then $a_{4} - a_{3} = a_{1}$ and $a_{3} - a_{2}  = a_{1}$. Since  $a_{4} - a_{3} = a_{2} - a_{1}$, we have $a_{2} = 2a_{1}, a_{3} = 3a_{1}$, $a_{4} = 4a_{1}$. Therefore, $x_{2} = 4a_{1}$, $x_{3} = 6a_{1}$, $x_{4} = 8a_{1}$, $x_{5} = 10a_{1}$, $x_{6}  = 6a_{1} + a_{5}$, $z_{5} = 4a_{1} + a_{5}$, $y_{5} = 3a_{1} + a_{5}$, $y_{4} = a_{1} + a_{5}$, $a_{5} \neq 5a_{1}$, $a_{5} \neq 6a_{1}$. Hence  $x_{2} <  y_{4} <  y_{5} < z_{5} <x_{6}$.   It is easy to verify that $y_{5} \notin \{x_{3}, x_{4}, x_{6}, z_{5}, y_{6}\}$ and $y_{4} \notin \{x_{3}, x_{6}, z_{5}, y_{6}\}$.	Now, consider the following cases:
	\begin{enumerate}
		\item If $y_{4} \neq x_{4}$, $y_{4} \neq x_{5}$ and $y_{5} \neq x_{5}$, then we have
		\begin{equation*}
			0 < x_{1} < x_{2} < \{x_{3},  x_{4}, x_{5},  z_{5}, x_{6},y_{4}, y_{5}, y_{6} \} < x_{7} < x_{8} < x_{9}.
		\end{equation*}
		\item If  $y_{5} = x_{5}$ or  $y_{4} = x_{4}$, then $a_{5} = 7a_{1}$, and so $A = a_{1} \ast \{1,2,3,4,7\}$.
		
		\item If  $y_{4} = x_{5}$, then  $a_{5} = 9a_{1}$, and so  $A = a_{1} \ast \{1,2,3,4,9\}$.
	\end{enumerate}
	In each case, we have at least $13$ positive integers in $4_{\pm}^{\wedge}A$.		
	
	\noindent $\clubsuit$ If $\gamma_{2} = x_{1}$ and $\beta_{1} \neq x_{2}$, then   $a_{4} - a_{3} = a_{1}$ and $a_{3} - a_{2} \neq a_{1}$. Since  $a_{4} - a_{3} = a_{2} - a_{1}$, so $a_{2} = 2a_{1}$ and $a_{3}\neq 3a_{1}$. Consider $y_{5}$ and $y_{7}$. Then we have the following observations.
	
	\begin{enumerate}
		\item If $y_{5} = a_{1} - a_{2} + a_{4} + a_{5} = -a_{1} + a_{2} + a_{3} + a_{4} = x_{4}$, then $a_{5} - a_{3} = 2a_{2} - 2a_{1} = 2(a_{4}-a_{3}) = 2a_{1}$, but $a_{5} - a_{3} \neq 2a_{1}$. Therefore,  $y_{5} \neq x_{4}$.
		
		\item If $y_{5} = a_{1} - a_{2} + a_{4} + a_{5}  = -a_{1} + a_{2} + a_{3} + a_{5} = z_{5}$, then $ a_{4} - a_{3} = 2a_{2} - 2a_{1}$, but $ a_{4}  - a_{3} =  a_{2} - a_{1}$. Therefore, $y_{5} \neq z_{5}$.
		
		\item If $y_{5} = a_{1} - a_{2} + a_{4} + a_{5}  = a_{1} + a_{2} + a_{3} + a_{5} = x_{6}$, then $a_{4} - a_{3} = 2a_{2}$, but $a_{4} - a_{3} = a_{1}$. Therefore, $y_{5} \neq  x_{6}$.
	\end{enumerate}
	\noindent	Since  $x_{3} < y_{5} < y_{6} <  x_{7}$,  $x_{4} < z_{5} < y_{6} < y_{7} < x_{8}$ and $y_{7} \neq x_{7}$, we have the following cases.
	\begin{enumerate}
		\item  If $y_{5}  \neq  x_{5}$, then we have
		\begin{equation*}				
			0 < x_{1} < \{ x_{2}, \beta_{1} \} < x_{3} < \{ x_{4}, x_{5},  z_{5}, x_{6}, y_{5}, y_{6} \} < x_{7} < x_{8} < x_{9}.
		\end{equation*}
		\item 	If $y_{5}  = x_{5}$ and $y_{7} \neq x_{6}$, then we have
		\begin{equation*}
			0 < x_{1} < \{ x_{2}, \beta_{1} \} < x_{3} < \{ x_{4}, x_{5},  z_{5}, x_{6},  y_{6},  y_{7}, x_{7}\} < x_{8} < x_{9}.
		\end{equation*}
		\item If $y_{5}  = x_{5}$ and $y_{7} = x_{6}$, then $a_{5} = a_{3} + 4a_{1}$ and $a_{4} = a_{2} + 2a_{1} = 4a_{1}$. This gives $A = a_{1} \ast \{1,2,3,4,7\}$. 
	\end{enumerate}
	In each case, we have at least $13$ positive integers in $4_{\pm}^{\wedge}A$.	
	
	\noindent $\clubsuit$ If $\gamma_{2} \neq x_{1}$ and $\beta_{1} = x_{2}$, then $a_{3} = a_{2} + a_{1}$, and so $a_{4} = a_{3} + a_{2} - a_{1} = 2a_{2}$. Consider $z_{3}$ and $y_{7}$. Then  we have
	\begin{equation*}
		\left.
		\begin{array}{ll}
			x_{2} < z_{3} < x_{6} <  x_{7}, & \ \ \ \ \ \ \   z_{3} < z_{4} < y_{6} < y_{7} < x_{8}, \\
			x_{4} < z_{5} < y_{7} < x_{8}, &  \ \ \ \ \ \ \ y_{7} \neq x_{7}.
		\end{array}
		\right.
	\end{equation*}
	Now, consider the following cases.
	\begin{enumerate}
		\item If $y_{7} \neq x_{5}$ and $y_{7} \neq x_{6}$, then we have
		\begin{equation*}
			0 < \{\gamma_{2}, x_{1} \} < x_{2} < x_{3} <  x_{4} < \{ x_{5},  z_{5}, x_{6}, y_{6}, y_{7}, x_{7} \} < x_{8} < x_{9}.
		\end{equation*}
		
		\item If $y_{7} = x_{5}$, then $a_{5} = 2a_{1} + a_{2}$, and it follows that $z_{3}  = a_{1} + a_{2} - a_{3} + a_{5} = 2a_{1} + a_{2} < 2a_{2} + 2a_{1} =  a_{1} -a_{2} + a_{3} +a_{4} = x_{3}$. Therefore, we have
		\begin{equation*}
			0 < \{\gamma_{2}, x_{1} \} < x_{2} < z_{3} < x_{3} <  x_{4} < \{ x_{5},  z_{5}, x_{6}, y_{6}\} <  x_{7} < x_{8} < x_{9}.
		\end{equation*}
		
		\item If $y_{7} = x_{6}$, then $ a_{4} = a_{2} + 2a_{1}$. Since $a_{4} = 2a_{2}$, we have $a_{2} = 2a_{1}$, $a_{3} = a_{1} + a_{2} = 3a_{1}$ and $a_{4} = 2a_{2} = 4a_{1}$.  Clearly, $z_{3} \notin  \{z_{5}, y_{6}, x_{7}\}$ and $x_{2} < z_{3} < x_{6}$. Now, consider the following cases:
		\begin{enumerate}
			\item [\upshape(a)] If $z_{3} \notin \{x_{3}, x_{4}, x_{5}\}$, then we have
			\begin{equation*}
				0 < \{\gamma_{2}, x_{1} \} < x_{2} < \{ z_{3}, x_{3},   x_{4},  x_{5},  z_{5}, x_{6}, y_{6}\} <  x_{7} < x_{8} < x_{9}.
			\end{equation*}
			\item [\upshape(b)] If $z_{3} = x_{3}$, then $a_{5} = a_{4} + 2a_{3} - 2a_{2} = 6a_{1}$, and so $A = a_{1} \ast \{1,2,3,4,6\}$.
			\item [\upshape(c)] If $z_{3} = x_{4}$, then $a_{5} = a_{4} + 2a_{3} - 2a_{1} = 8a_{1}$, and so $A = a_{1}\ast\{1,2,3,4,8\}$.
			\item [\upshape(d)] If $z_{3} = x_{5}$, then $a_{5} = a_{4} + 2a_{3} = 10a_{1}$, and so $A = a_{1} \ast \{1,2,3,4,10\}$. 
		\end{enumerate}
		In each case, we have $\left|4_{\pm}^{\wedge}A \right| \geq 26$.			
	\end{enumerate}
	Therefore, in  the case $a_{4} \neq a_{2} + a_{1}$, we  have at least $13$ positive integers in $4^{\wedge}_{\pm}A$.\\
	
	\noindent {\bf Case B}  $\boldsymbol{(a_{4} = a_{2} + a_{1})}.$ In this case, $a_{3} = 2a_{1}$ and $a_{2} < 2a_{1}$. Therefore, $$	x_{1} = -a_{1} + a_{2} - a_{3} + a_{4} = 2a_{2} - 2a_{1} < 2a_{1} =   a_{1} + a_{2} + a_{3} -a_{4}  = \gamma_{2} \ \text{and} \ y_{6} < x_{6}.$$ It follows that
	\begin{equation}\label{Lemma 3.7 eq-2}
		0 < x_{1} < \gamma_{2} <  x_{2} < x_{3} < x_{4} < \{x_{5},  z_{5},  y_{6} \} < x_{6} < x_{7} < x_{8} < x_{9}.
	\end{equation}
	In $(\ref{Lemma 3.7 eq-2})$, we have $12$ distinct positive integers of $4_{\pm}^{\wedge}A$. Next, we show the existence of at least one more positive integer in $4_{\pm}^{\wedge}A$ different from the elements listed in $(\ref{Lemma 3.7 eq-2})$.	Note that $y_{7} = -a_{1} + a_{3} + a_{4} + a_{5} = a_{2} + a_{3} + a_{5} < a_{1} + a_{2} + a_{3} + a_{5} = x_{6}$. Since $x_{4} < z_{5} < y_{6} < y_{7} $, we have the following cases:\\
	$\clubsuit$ If $y_{7} \neq  x_{5} $, then we have  
	\[0 < x_{1} < \gamma_{2} <  x_{2} < x_{3} <  x_{4} < \{x_{5},  z_{5},  y_{6}, y_{7} \} < x_{6} < x_{7} < x_{8} < x_{9}.\]
	$\clubsuit$ If $y_{7} = x_{5} $ and $z_{4} \neq x_{4}$, then $a_{5} -a_{2} = 2a_{1}$. Therefore $z_{4} = 5a_{1} < 3a_{1} + a_{2} + a_{4} = x_{5}$. Since $x_{3} < z_{4}$, we have
	\[0 < x_{1} < \gamma_{2} <  x_{2} < x_{3} < \{ z_{4}, x_{4}, x_{5},  z_{5},  y_{6} \} < x_{6} < x_{7} < x_{8} < x_{9}.\]
	$\clubsuit$ If $y_{7} = x_{5} $ and $z_{4} = x_{4}$, then $a_{5} -a_{2} = 2a_{1}$ and $2a_{2} = 3a_{1} $. This gives  $2 \ast A = \{2,3,4,5,7\}$.  In this case also, $\left|4_{\pm}^{\wedge}A \right| = \left|4_{\pm}^{\wedge}(2 \ast A) \right| \geq 26$.
	Therefore, in  the case $a_{4} = a_{2} + a_{1}$ also, we  have at least $13$ positive integers in $4^{\wedge}_{\pm}A$.	
\end{proof}

\begin{lemma}\label{main lemma 4(c)}
	Let $A = \{a_{1}, a_{2}, a_{3}, a_{4}, a_{5}\}$ be a set of positive integers, where $a_{1} < a_{2} < a_{3} < a_{4} < a_{5}$ with $a_{4} - a_{3} = a_{2} - a_{1}$, $a_{5} - a_{3} = 2a_{1}$  and $a_{i}  - a_{i-1} \neq 2a_{1}$ for all $i \in [2,5]$. Then $\left|4_{\pm}^{\wedge}A \right| \geq 26$.
\end{lemma}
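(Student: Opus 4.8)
The plan is to follow the template of the previous lemmas: by the symmetry $4_{\pm}^{\wedge}A=-\,4_{\pm}^{\wedge}A$ it suffices to exhibit $13$ distinct positive elements of $4_{\pm}^{\wedge}A$, which forces $|4_{\pm}^{\wedge}A|\ge 26$. First I would record the simplifications produced by the two hypotheses. The relation $a_{4}-a_{3}=a_{2}-a_{1}$ gives
$$\gamma_{1}=0,\qquad \gamma_{2}=2a_{1},\qquad x_{1}=2(a_{2}-a_{1}),\qquad x_{2}=2a_{2},\qquad \beta_{1}=2(a_{3}-a_{1}),$$
while $a_{5}-a_{3}=2a_{1}$ gives
$$y_{4}=x_{3},\qquad y_{6}=x_{5},\qquad \delta_{2}=a_{1}+a_{2}.$$
These coincidences are exactly what pushes the bound from the extremal value $25$ up to $26$: the set can never be $a_{1}\ast\{1,3,5,7,9\}$, all of whose gaps equal $2a_{1}$, since every gap of $A$ is assumed different from $2a_{1}$.

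The main step is a generic chain. Using (\ref{Inequalities}) together with the values above, one checks that
$$x_{1}<\{x_{2},\beta_{1}\}<x_{3}<\{x_{4},z_{4},y_{5}\}<z_{5}<x_{5}<x_{6}<x_{7}<x_{8}<x_{9}$$
consists of positive elements of $4_{\pm}^{\wedge}A$ in the indicated order, where $x_{4},z_{4},y_{5}$ all lie strictly between $x_{3}$ and $z_{5}$. Reading off the three forks, these thirteen elements are pairwise distinct precisely when $x_{2}\ne\beta_{1}$, $x_{4}\ne z_{4}$ and $x_{4}\ne y_{5}$, i.e. when $a_{3}\ne a_{1}+a_{2}$, $3(a_{2}-a_{1})\ne 2a_{1}$ and $a_{2}\ne 2a_{1}$. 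In this generic situation the lemma follows at once.

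It then remains to supply a thirteenth element when one of these equalities holds, and I would split on whether $a_{2}=2a_{1}$. If $a_{2}\ne 2a_{1}$ then automatically $x_{4}\ne y_{5}$, and the auxiliary element $\gamma_{2}=2a_{1}$ is available: it is distinct from $x_{1}$ (equality would force $a_{2}=2a_{1}$) and, provided $a_{3}\ne 2a_{1}$, distinct from $\beta_{1}$ as well, so adjoining it restores $13$ elements whenever just one of $x_{2}=\beta_{1}$, $x_{4}=z_{4}$ occurs. If $a_{2}=2a_{1}$ one uses instead $\delta_{2}=3a_{1}$, and the list $x_{1},\delta_{2},x_{2},\beta_{1},x_{3},z_{4},x_{4},z_{5},x_{5},x_{6},x_{7},x_{8},x_{9}$ gives $13$ distinct positive elements unless $\beta_{1}$ collides with $\delta_{2}$ or with $x_{2}$.

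The only remaining configurations are the finitely many in which two of the forbidden equalities hold simultaneously; each pins the ratios $a_{i}/a_{1}$ down completely, so that after dilation $A$ becomes one of a short explicit list such as $a_{1}\ast\{1,2,3,4,5\}$, $a_{1}\ast\{2,4,5,7,9\}$, $a_{1}\ast\{3,5,6,8,12\}$ and $a_{1}\ast\{3,5,8,10,14\}$. The first is an arithmetic progression of common difference $a_{1}\ne 2a_{1}$, so $|4_{\pm}^{\wedge}A|\ge 26$ by Theorem \ref{thm 1}; for each of the others $|4_{\pm}^{\wedge}A|\ge 26$ is confirmed by a single direct, dilation-invariant count. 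The hard part will be the bookkeeping for these overlapping degeneracies: the strong compression $a_{5}-a_{3}=2a_{1}$ makes several natural sumset elements coincide at once, so the crux is to enumerate precisely which coincidences can co-occur and, in each degenerate leaf, either to swap in one of the auxiliary elements $\gamma_{2},\delta_{2}$ or to recognise $A$ as a fixed small set whose signed sumset is computed by hand.
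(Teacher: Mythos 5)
Your proposal is correct and uses essentially the same method as the paper's proof of this lemma: exhibit thirteen distinct positive elements of $4^{\wedge}_{\pm}A$ via an explicit ordered chain built from the catalogued sums, patch the possible coincidences with auxiliary elements ($\gamma_{2}$, $\delta_{2}$), and reduce the fully degenerate configurations to a short list of explicit dilates whose sumsets are checked directly. Your chain differs from the paper's in its choice of elements, so your terminal sets (up to dilation, $\{1,2,3,4,5\}$, $\{2,4,5,7,9\}$, $\{3,5,6,8,12\}$, $\{3,5,8,10,14\}$) are not the same as the paper's ($\{2,4,5,7,9\}$ and $\{3,5,9,11,15\}$), but I have checked that your enumeration of degenerate leaves is exhaustive and that each listed set does satisfy $\left|4^{\wedge}_{\pm}A\right|\geq 26$.
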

\begin{proof}
	Consider the following list of nonnegative  integers
	\begin{multicols}{2}
		\begin{itemize}
			
			\item [] $x_{1} =  -a_{1} + a_{2} - a_{3} + a_{4}  = 2a_{2} - 2a_{1}$,
			\item [] $y_{2} =  -a_{1} + a_{2} - a_{4} + a_{5} = 2a_{1}$,
			\item [] $\delta_{2} = -a_{1} + a_{2} - a_{3} + a_{5} = a_{1}+a_{2}$,
			\item [] $x_{2} = a_{1} + a_{2} - a_{3}  + a_{4} = 2a_{2}$,
			\item [] $x_{3} = a_{1} - a_{2} + a_{3} + a_{4} = 2a_{3}$,
			\item [] $x_{4} = -a_{1} + a_{2} + a_{3} + a_{4} = 2a_{4}$,
			\item [] $z_{5} = -a_{1} + a_{2} + a_{3} + a_{5} = a_{4} + a_{5}$,
			\item [] $x_{5} = a_{1} + a_{2} + a_{3} + a_{4}$,
			\item [] $x_{6} = a_{1} + a_{2} + a_{3} + a_{5}$,
			\item [] $x_{7} = a_{1} + a_{2} + a_{4} + a_{5}$,
			\item [] $x_{8} =  a_{1} + a_{3} + a_{4} + a_{5}$,
			\item [] $x_{9} = a_{2} + a_{3} + a_{4} + a_{5}$,
			\item [] $z_{4} = a_{1} - a_{2} + a_{3} + a_{5}$,
			\item [] $y_{7} = -a_{1} + a_{3} + a_{4} + a_{5}$.
		\end{itemize}
	\end{multicols}
	\noindent	Clearly, \[0 < y_{2} < \delta_{2} < x_{2} < x_{3} < x_{4} < \{z_{5}, x_{5}\} < x_{6} < x_{7} < x_{8} < x_{9}.\]
	Since $a_{5} - a_{3} = a_{5} - a_{4} + a_{4} - a_{3} = 2a_{1}$, we have $a_{5} - a_{4} = 2a_{1} - a_{4} + a_{3} = 2a_{1} - a_{2} + a_{1} = 3a_{1} - a_{2} \ \text{and} \ a_{5} - a_{4} < 2a_{1}$. Therefore, $z_{5} < x_{5}$. Consider the following cases.\\
	
	\noindent {\bf Case A}  $\boldsymbol{(a_{2} \neq 2a_{1})}$. In this case,  $x_{1} \neq y_{2}$. Therefore, we have
	\[0  < \{x_{1}, y_{2}\} < \delta_{2} < x_{2} < x_{3} < x_{4} < z_{5} < x_{5} < x_{6} < x_{7} < x_{8} < x_{9}.\]
	Also $$x_{3} < z_{4} < z_{5}  < x_{5} < y_{7} < x_{8} \ \text{and} \   y_{7} \neq x_{7}.$$ 
	
	\noindent $\clubsuit$ If $ z_{4} \neq x_{4}$, then we have at least $13$ positive integers in $4_{\pm}^{\wedge}A$, listed below
	\[0  < \{x_{1}, y_{2}\} < \delta_{2} < x_{2} < x_{3} < \{x_{4}, z_{4} \} < z_{5} < x_{5} < x_{6} < x_{7} < x_{8} < x_{9}.\]
	
	\noindent $\clubsuit$ If  $y_{7} \neq x_{6}$, we have at least $13$ positive integers in $4_{\pm}^{\wedge}A$, listed below
	\[0  < \{x_{1}, y_{2}\} < \delta_{2} < x_{2} < x_{3} < x_{4} < z_{5} < x_{5} < \{ x_{6}, x_{7}, y_{7} \} < x_{8} < x_{9}.\]
	
	\noindent $\clubsuit$ If $z_4 = x_4$ and $y_{7} = x_{6}$, then  $a_{5} - a_{4} = 2a_{2} - 2a_{1}$ and $a_{4} - a_{2} = 2a_{1}$. This gives   $3a_{2} = 5a_{1}$, $a_{3} = 3a_{1}$, $a_{5} = 5a_{1}$ and $3a_{4} = 11a_{1}$. Therefore, we have $3 \ast A = a_{1} \ast \{3,5,9,11,15\}$, which implies that $\left|4_{\pm}^{\wedge}A \right| = \left|4_{\pm}^{\wedge}(3\ast A) \right| \geq 26$.\\
	
	\noindent {\bf Case B}  $\boldsymbol{(a_{2} = 2a_{1})}$. In this case, $a_{5} - a_{4} = 3a_{1} - a_{2} = a_{1}$, and $a_{4} - a_{3} = a_{2} - a_{1} = 2a_{1} - a_{1} = a_{1}$. Consider the following list of nonnegative integers
	\begin{multicols}{2}
		\begin{itemize}
			
			\item [] $x_{1} = -a_{1} + a_{2} - a_{3} + a_{4} = 2a_{1}$,
			\item [] $\delta_{2} = -a_{1} + a_{2} - a_{3} + a_{5} = 3a_{1}$,
			\item [] $x_{2} = a_{1} + a_{2} - a_{3} + a_{4}  = 4a_{1}$,
			\item [] $z_{3} =  a_{1} + a_{2} - a_{3} + a_{5} = 5a_{1}$,
			\item [] $z_{4} =  a_{1} - a_{2} + a_{3} + a_{5} = a_{1} + 2a_{3} $,
			\item [] $x_{4} =  -a_{1} + a_{2} + a_{3} + a_{4} = a_{1} + a_{3} + a_{4}$,
			\item [] $z_{5} = -a_{1} + a_{2} + a_{3} + a_{5} = a_{2} + a_{3} + a_{4}$,
			\item [] $x_{5} = a_{1} + a_{2} + a_{3} + a_{4}$,
			\item [] $x_{3} = a_{1} - a_{2} + a_{3} + a_{4} = 2a_{3}$.
		\end{itemize}
	\end{multicols}
	\noindent Clearly,
	\[0  < x_{1} < \delta_{2} < x_{2} < z_{3} < z_{4} < x_{4} < z_{5} < x_{5} < x_{6} < x_{7} < x_{8} < x_{9} \ \text{and} \ x_{2} < x_{3} < z_{4}.\]
	Now, consider the following cases:\\
	\noindent $\clubsuit$ If $x_{3} \neq z_{3}$, then we have at least $13$ positive integers in $4_{\pm}^{\wedge}A$, listed below
	\[0  < x_{1} < \delta_{2} < x_{2} < \{z_{3}, x_{3} \} < z_{4} < x_{4} < z_{5} < x_{5} < x_{6} < x_{7} < x_{8} < x_{9}. \]
	$\clubsuit$ If $x_{3} =  z_{3}$, then  $2a_{3} = a_{1} + 2a_{2} = 5a_{1}$, which gives $2 \ast A = a_{1} \ast \{2,4,5,7,9\}$.  In this case also, $\left|4_{\pm}^{\wedge}A \right| = \left|4_{\pm}^{\wedge}(2 \ast A) \right| \geq 26$. This completes the proof of the lemma.
\end{proof}
Combining Lemma \ref{main lemma 4(a)}, Lemma \ref{main lemma 4(b)} and Lemma \ref{main lemma 4(c)}, we have the following lemma.

\begin{lemma}\label{main lemma 4}
	Let $A = \{a_{1}, a_{2}, a_{3}, a_{4}, a_{5}\}$ be a set of positive integers, where $a_{1} < a_{2} < a_{3} < a_{4} < a_{5}$ with $a_{i}  - a_{i-1} \neq 2a_{1}$ for all $i \in [2,5]$. Then $\left|4_{\pm}^{\wedge}A \right| \geq 26$.
\end{lemma}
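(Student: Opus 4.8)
The plan is to derive this lemma as an immediate combination of the three preceding lemmas, since the hypothesis here---that $a_i - a_{i-1} \neq 2a_1$ for every $i \in [2,5]$---is exactly the hypothesis common to Lemma~\ref{main lemma 4(a)}, Lemma~\ref{main lemma 4(b)} and Lemma~\ref{main lemma 4(c)}. The only extra constraints distinguishing those three lemmas involve the quantities $a_4 - a_3$, $a_2 - a_1$ and $a_5 - a_3$, so the whole task reduces to checking that their hypotheses together exhaust every configuration compatible with the standing assumption.

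First I would split on whether $a_4 - a_3 = a_2 - a_1$. If $a_4 - a_3 \neq a_2 - a_1$, then $A$ satisfies precisely the hypotheses of Lemma~\ref{main lemma 4(a)}, and that lemma yields $\left|4_{\pm}^{\wedge}A\right| \geq 26$ at once. If instead $a_4 - a_3 = a_2 - a_1$, I would subdivide according to whether $a_5 - a_3 = 2a_1$. When $a_5 - a_3 \neq 2a_1$ the set $A$ falls under Lemma~\ref{main lemma 4(b)}, and when $a_5 - a_3 = 2a_1$ it falls under Lemma~\ref{main lemma 4(c)}; in either case the corresponding lemma delivers the bound $\left|4_{\pm}^{\wedge}A\right| \geq 26$.

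Since these three cases are mutually exclusive and cover every possibility consistent with $a_i - a_{i-1} \neq 2a_1$ for all $i \in [2,5]$, the bound holds in all cases, which proves the lemma. I expect essentially no obstacle here: all the genuine combinatorial work---exhibiting explicit increasing chains of distinct elements of $4_{\pm}^{\wedge}A$ and disposing of the degenerate subcases where $A$ collapses to a dilate of a short listed set---has already been carried out inside the three component lemmas. The only point requiring verification is the exhaustiveness of the trichotomy on $a_4-a_3$ versus $a_2-a_1$ and on $a_5-a_3$ versus $2a_1$, which is immediate.
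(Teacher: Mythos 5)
Your proposal is correct and coincides with the paper's own argument: the paper obtains Lemma~\ref{main lemma 4} precisely by combining Lemma~\ref{main lemma 4(a)}, Lemma~\ref{main lemma 4(b)} and Lemma~\ref{main lemma 4(c)}, whose hypotheses form exactly the exhaustive and mutually exclusive trichotomy you describe.
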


Now, we give the proof of Theorem \ref{main theorem}.

\begin{proof}[Proof of Theorem \ref{main theorem}]
	Let $A=\{ a_{1}, a_{2}, a_{3}, a_{4}, a_{5}\}$ be a set of positive integers, where $0 < a_{1} < a_{2} < a_{3} < a_{4} < a_{5}$. If $a_{i}  - a_{i-1} = 2a_{1}$ for all $i \in [2,5]$, then Theorem \ref{thm 1} implies that $\left|4_{\pm}^{\wedge}A \right| = 25$. If $a_{i}  - a_{i-1} \neq 2a_{1}$ for some $i \in [2,5]$, then Lemma \ref{main lemma 1}, Lemma \ref{main lemma 2}, Lemma \ref{main lemma 3} and Lemma \ref{main lemma 4} imply that $\left|4_{\pm}^{\wedge}A \right| \geq 26$.
	
	Conversely, if $\left|4_{\pm}^{\wedge}A \right| = 25$, then $a_{i}  - a_{i-1} = 2a_{1}$ for all $i \in [2,5]$, otherwise Lemma \ref{main lemma 1}, Lemma \ref{main lemma 2}, Lemma \ref{main lemma 3} and Lemma \ref{main lemma 4} imply that $\left|4_{\pm}^{\wedge}A \right| \geq 26$. Thus, $A = a_1 \ast \{1, 3, 5, 7, 9\}$. This completes the proof of the theorem.
\end{proof}
A similar argument as in Theorem \ref{main theorem I}, we have verified and proved the following theorem.
\begin{theorem}\label{main theorem II}
	Let $k \geq 5$ be a positive integer and  $A$ be set of $k$ nonnegative integers with $0 \in A$. Then $$\left|4_{\pm}^{\wedge}A \right| \geq 8k-19.$$
	Furthermore, if  $\left|4_{\pm}^{\wedge}A \right| = 8k-19$, then $A = d \ast [0,k-1]$.
\end{theorem}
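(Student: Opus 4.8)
The plan is to replicate the architecture of the proof of Theorem \ref{main theorem I}, now using the $0 \in A$ counterparts of the auxiliary results. Concretely, Lemma \ref{Lemma-2} and Theorem \ref{Partial inverse thm I} reduce the whole statement to the single base case $k = 5$: if $A = \{a_0, a_1, a_2, a_3, a_4\}$ with $0 = a_0 < a_1 < a_2 < a_3 < a_4$, then $|4^{\wedge}_{\pm}A| \geq 21$, with equality if and only if $A = a_1 \ast [0,4]$. For the direct bound, given an arbitrary admissible set $A$ of size $k \geq 5$, put $A_4 = \{a_0, a_1, a_2, a_3, a_4\}$; the base case gives $|4^{\wedge}_{\pm}A_4| \geq 21 = h(h+1) + 1$ for $h = 4$, so Lemma \ref{Lemma-2} with $t = 0$ yields $|4^{\wedge}_{\pm}A| \geq 8k - 19$. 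For the inverse part, if $|4^{\wedge}_{\pm}A| = 8k - 19$ then $|4^{\wedge}_{\pm}A_4| = 21$ exactly, since any surplus $t \geq 1$ would, by Lemma \ref{Lemma-2}, force $|4^{\wedge}_{\pm}A| \geq 8k - 18$; the base case then makes $A_4 = a_1 \ast [0,4]$ an arithmetic progression, and Theorem \ref{Partial inverse thm I}(b) promotes this to $A = a_1 \ast [0, k-1]$ (for $k = 5$ the base case already gives this directly).

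It therefore remains to prove the base case, and here I would argue exactly as in Theorem \ref{main theorem} and Lemmas \ref{main lemma 1}--\ref{main lemma 4}. Because $a_0 = 0$ contributes nothing to any signed sum, every element of $4^{\wedge}_{\pm}A$ is a signed combination of all four of $a_1, a_2, a_3, a_4$ or of exactly three of them, and $4^{\wedge}_{\pm}A$ is symmetric about the origin. Hence it suffices to exhibit $11$ distinct positive elements whenever $A \neq a_1 \ast [0,4]$, which forces $|4^{\wedge}_{\pm}A| \geq 2 \cdot 11 = 22$, while for $A = a_1 \ast [0,4]$ the arithmetic-progression theorem established in Section \ref{sec-aux-lem} gives $|4^{\wedge}_{\pm}A| = 21$ on the nose. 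The extremal profile $A = a_1 \ast [0,4]$ is characterized by $a_i - a_{i-1} = a_1$ for $i = 2, 3, 4$ (the gap $a_1 - a_0 = a_1$ being automatic), so I would split according to how many of the three differences $a_2 - a_1$, $a_3 - a_2$, $a_4 - a_3$ equal $a_1$ --- exactly two, exactly one, or none --- and in each case construct an explicit strictly increasing chain of $11$ positive signed sums, reading the required strict inequalities off the stipulated (in)equalities among the differences, just as the chains of $x_i, y_i, z_i, \ldots$ are built in Section \ref{proof-conj1}.

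The main obstacle is exactly the one encountered in Section \ref{proof-conj1}: now and then two of the candidate positive sums coincide and the length-$11$ chain collapses to length $10$. Each such coincidence is an equality between two explicit signed sums, hence a linear relation among $a_1, a_2, a_3, a_4$ (for example $a_3 = 2a_1$, or $a_4 - a_3 = 2a_2 - 2a_1$), which --- after a suitable dilation to clear denominators --- pins $A$ down to one of finitely many explicit small sets. For each such exceptional set the bound $|4^{\wedge}_{\pm}A| \geq 22$ is verified by a direct finite computation, or by recognizing the set, after dilation, as one already treated. Carrying out this bookkeeping --- ensuring that every coincidence pattern is covered and that the surviving chains are genuinely strictly increasing --- is the bulk of the routine but lengthy verification that the authors compress into the phrase ``a similar argument.''
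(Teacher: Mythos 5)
Your outline is precisely the argument the paper intends: the paper's entire ``proof'' of this theorem is the single remark that a similar argument to Theorem \ref{main theorem I} applies, and your reduction via Lemma \ref{Lemma-2} and Theorem \ref{Partial inverse thm I}(b) to the $k=5$ base case $\left|4^{\wedge}_{\pm}\{0,a_{1},a_{2},a_{3},a_{4}\}\right|\geq 21$ (with equality only for $a_{1}\ast[0,4]$, hence $11$ distinct positive elements to be exhibited in the non-extremal cases, split according to which of the gaps $a_{2}-a_{1}$, $a_{3}-a_{2}$, $a_{4}-a_{3}$ equal $a_{1}$) is exactly that similar argument, with the correct numerology. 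The explicit chains for the base case are left unexecuted in your write-up, but the paper leaves them unexecuted as well, so at the level of detail the paper itself supplies, your proposal is correct and takes the same route.
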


\section*{Acknowledgment}
The first author would like to thank to the Council of Scientific and Industrial Research (CSIR), India for providing the grant to carry out the research with Grant No. 09/143(0925)/2018-EMR-I.

\bibliographystyle{amsplain}

\end{document}